\documentclass{amsart}

\usepackage{layout} 
\usepackage[top=3cm, bottom=3cm, left=2cm, right=2cm]{geometry} 
\usepackage[english]{babel}
\usepackage[utf8]{inputenc}
\usepackage[T1]{fontenc}
\usepackage{amsmath}
\usepackage{amssymb}
\usepackage{mathrsfs}
\usepackage{amsthm}
\usepackage{enumerate} 
\usepackage[colorlinks=true,urlcolor=blue, citecolor=red,linkcolor=blue]{hyperref}
\usepackage{enumitem}
\usepackage{comment}
\usepackage{mathtools}
\usepackage{esint}
\usepackage{faktor}
\usepackage{color}
\usepackage[capitalize]{cleveref}

\newtheorem{theorem}{Theorem}[section]
\newtheorem{proposition}[theorem]{Proposition}
\newtheorem{corollary}[theorem]{Corollary}
\newtheorem{lemma}[theorem]{Lemma}

\newcommand{\scal}[2]{\left\langle #1,#2 \right\rangle}

\newcommand{\dr}{\partial}
\newcommand{\bz}{\bar{z}}

\newcommand{\Span}{\mathrm{Span}}

\newcommand{\dd}{\mathrm{d}}

\newcommand{\vol}{\mathrm{vol}}

\newcommand{\tr}{\mathrm{tr}}
\newcommand{\Id}{\mathrm{Id}}

\newcommand{\W}{\mathrm{W}}
\newcommand{\Ll}{\mathrm{L}}
\newcommand{\Cc}{\mathrm{C}}

\newcommand{\VR}{\mathrm{V_R}}

\newcommand{\IL}{\mathrm{I^L}}

\newcommand{\R}{\mathbb{R}}
\newcommand{\C}{\mathbb{C}}
\newcommand{\N}{\mathbb{N}}

\newcommand{\s}{\mathbb{S}}
\newcommand{\T}{\mathbb{T}}
\newcommand{\Z}{\mathbb{Z}}
\newcommand{\D}{\mathbb{D}}
\newcommand{\Hd}{\mathbb{H}}

\newcommand{\Sr}{\mathcal{S}}

\newcommand{\Er}{\mathcal{E}}

\newcommand{\Pcal}{\mathcal{P}}

\newcommand{\Nc}{\mathscr{N}}
\newcommand{\Sc}{\mathscr{S}}

\newcommand{\Ep}{\mathrm{Ep}}

\newcommand{\ust}{\underset}

\newcommand{\vp}{\varphi}
\newcommand{\eps}{\varepsilon}
\newcommand{\Chat}{\widehat{\C}}

\newcommand{\ii}{\mathfrak{i}}

\title[Continuity of the Renormalised Volume]{Continuity of the renormalised volume between Epstein surfaces with respect to the Weil--Petersson topology}
\author{Dorian Martino}
\date{}

\usepackage[colorinlistoftodos,prependcaption,textsize=tiny]{todonotes}

\usepackage{parskip}

\begin{document}
	
	\maketitle
	
	\begin{abstract}
		We prove that the renormalised volume between the two Epstein surfaces generated by a Weil--Petersson quasicircle is continuous with respect to the Weil--Petersson topology. As a consequence, we prove that the universal Liouville action is proportional to this renormalised volume for any Weil--Petersson quasicircle.
	\end{abstract}

	
	\section{Introduction}
	
	Conformally invariant Lagrangians for submanifolds of Euclidean spaces can often be obtained by renormalising an infinite volume. For instance, Graham--Witten \cite{GrahamWitten1999} introduced in 1999 the notion of renormalised area or volume for $(n+1)$-dimensional minimal submanifolds $Y^{n+1}$ of hyperbolic space $\Hd^{d+1} = \{(x,r) : x\in\R^d, r>0\}$ (or, more generally, of any Poincaré--Einstein manifold), in the context of AdS/CFT correspondence (it is used for instance to compute the expected value of Wilson lines or the entanglement entropy, see \cite{GK2014,Anastasiou2025,Bueno2026}). They proved the following asymptotic expansion:
	\begin{equation*}
		\vol_{\Hd^{d+1}}\left(Y^{n+1}\cap \{r>\eps\}\right) \ust{\eps\to 0^+}{=} \frac{a_n}{\eps^n} + \cdots + \frac{a_1}{\eps} + b\, \log\left(\frac{1}{\eps}\right) + c + o(1).
	\end{equation*}
	When $n$ is even, Graham--Witten proved that $b$ defines a non-trivial conformally invariant Lagrangian on the boundary $\dr_{\infty}Y^{n+1}\subset \R^d$. When $n=2$, Graham--Witten recovered the Willmore energy, which is given by the $\Ll^2$ norm of the mean curvature vector. More generally, when $n\geq 4$ is even, the coefficient $b$ can be described as a generalised Willmore energy \cite{Graham2017,GW2017} on $n$-dimensional submanifolds of $\R^d$ and is independent of the choice of the minimal extension in $\Hd^{d+1}$. One can then use these Lagrangians to study the set of immersions of a given abstract manifold into $\R^d$ up to conformal transformations, much as the length functional is used as a Morse-type functional to study the set of loops \cite{K1978}. This is, for instance, one of the motivations for studying the Willmore conjecture \cite{MN2014} or the $16\pi$ conjecture \cite{FSKHC1997,R2021}. We refer to the recent survey \cite{LMR2026} for further references.
	
	When $n$ is odd, $b=0$, and the coefficient $c$ (called the renormalised volume or area) depends on the chosen submanifold $Y$, see for instance the work of Alexakis--Mazzeo \cite{AlexakisMazzeo2010} for $n=1$. However, given a smooth Jordan curve $\Gamma\subset \R^2$, solutions to the asymptotic Plateau problem are generally not unique \cite{C2014}, which means that one cannot regard $c$ as a functional of the boundary submanifold. In 2008, Krasnov--Schlenker \cite{KrasnovSchlenker2008} introduced another way of computing the renormalised volume of hyperbolic 3-manifolds using the notion of Epstein surfaces described below. Similar constructions can be found in \cite{TT2003,K2000}. This approach has later been developed by Bridgeman--Bromberg in \cite{BridgemanBromberg2026} to study relations between renormalised volume, W-volume and Osgood--Stowe differential.
	
	Epstein surfaces were introduced in 1984 \cite{Epstein1984} as a way of associating a surface in hyperbolic space with a conformal metric on a given domain of the sphere at infinity. In the setting considered here, they can be defined as follows.
	We consider a domain $\Omega\subset\Chat$ endowed with its complete Poincaré metric $h = e^{2u}|\dd z|^2$ of curvature $-1$. For each $z\in\Omega$, we consider the horosphere $H_z$ in $\Hd^3$ that is tangent to $\dr_{\infty}\Hd^3 = \C$ and has Euclidean radius $e^{-u(z)}$:
	\begin{equation*}
		H_z \coloneqq \left\{ (w,t)\in\C\times [0,+\infty) : |w-z|^2 + (t-e^{-u(z)})^2 = e^{-2u(z)} \right\}.
	\end{equation*}
	The Epstein surface associated with $\Omega$ is the envelope of these spheres in $\Hd^3$. An explicit expression is provided in \Cref{sec:epstein}. These envelopes can have singularities as surfaces, i.e., they may not be immersed everywhere, but they always admit a parametrisation that is analytic in the interior of $\Omega$. An analogue in dimension 1 of these Epstein surfaces has recently been studied in \cite{VargasPalleteWangWolfram2025}, inspired by the holographic duality between Jackiw--Teitelboim gravity and Schwarzian field theory.
	
	A smooth Jordan curve $\Gamma\subset\Chat=\C\cup\{\infty\}$ splits $\Chat$ into two disjoint simply connected domains $\Omega_+$ and $\Omega_-$, each of which induces an Epstein surface in $\Hd^3$. Using the formulae of \cite{KrasnovSchlenker2008}, Bridgeman--Bromberg--Vargas Pallete--Wang \cite{BBVPW2025} defined the signed volume $V(\Gamma)$ between these surfaces and the renormalised volume
	\begin{equation}\label{eq:intro-renormalized-volume}
		\VR(\Gamma)=V(\Gamma) -\frac{1}{2}\int_{\Sigma_+\cup\Sigma_-}H\,\dd a,
	\end{equation}
	where $H=\frac{1}{2} \tr B$ is the scalar mean curvature, $B$ is the shape operator of the Epstein surfaces, and $\dd a$ is the signed area form. By differentiating $\VR$, they proved that it is, up to a factor of $4$, equal to the universal Liouville action $\Sr$ introduced by Takhtajan--Teo \cite{TakhtajanTeo2006} and defined as follows. Given two uniformising maps $f\colon \D\to \Omega_+$ and $g\colon \D^*\coloneqq \C\setminus \overline{\D}\to \Omega_-$, we define
	\begin{equation*}
		\Sr(\Gamma)\coloneqq \int_{\D} \left|\frac{f''(z)}{f'(z)}\right|^2\, \dd x\, \dd y + \int_{\D^*} \left|\frac{g''(z)}{g'(z)}\right|^2\, \dd x\, \dd y + 4\pi\, \log\left| \frac{f'(0)}{g'(\infty)}\right|.
	\end{equation*}
	The functional $\Sr$ can be regarded as a Kähler potential on the connected component of the identity in the universal Teichmüller space \cite{TakhtajanTeo2006}. A Jordan curve $\Gamma$ for which $\Sr(\Gamma)<+\infty$ is called a Weil--Petersson curve. Several characterisations are proved in \cite{Bishop2025}, one of which is that $\Gamma$ has an arclength parametrisation whose tangent vector lies in the Sobolev space $\W^{\frac{1}{2},2}(\Gamma)$. This class of curves is discussed in \Cref{sec:normalization} below. It is interesting to observe that the notion of a Weil--Petersson curve is the one-dimensional counterpart of the notion of weak immersions in a critical Sobolev space introduced for surfaces in \cite{R2014} and for higher-dimensional submanifolds in \cite{MR2025,MR2026}.
	
	In \cite{BBVPW2025}, the authors proved that the identity $4\VR(\Gamma)=\Sr(\Gamma)$ holds for any Jordan curve of class $\Cc^{5,\alpha}$ and that $\Sr(\Gamma)\geq 4\VR(\Gamma)$ for any Weil--Petersson curve. The main result of this work is to extend this identity to arbitrary Weil--Petersson curves.
	\begin{theorem}\label{th:continuity}
		The functional $\VR$ is continuous with respect to the Weil--Petersson topology: if $(\Gamma_j)_{j\in\N}$ is a sequence of Weil--Petersson curves converging to $\Gamma$ in the Weil--Petersson topology, then $\VR(\Gamma_j)\to \VR(\Gamma)$ as $j\to\infty$.
	\end{theorem}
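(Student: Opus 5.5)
My approach is to write $\VR(\Gamma)$ as an explicit integral over the two domains (or, after pull-back, over $\D$ and $\D^*$) in terms of the uniformising maps $f$ and $g$. Continuity of $\VR$ then reduces to continuity of each integrand in a suitable $\Ll^1$ sense under Weil--Petersson convergence. The Weil--Petersson topology is characterised, after fixing the normalisation of \Cref{sec:normalization}, by $\log f_j'\to \log f'$ in the Dirichlet space, i.e.\ $f_j''/f_j' \to f''/f'$ in $\Ll^2(\D)$, and likewise for $g_j$ on $\D^*$. By conformal naturality of Epstein surfaces, the Epstein surface of $\Omega_+$ is obtained from the horospheres attached to the Poincaré metric $e^{2u}|\dd z|^2$ with $u = \log\frac{2}{1-|w|^2} - \log|f'|$ pulled back. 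The explicit parametrisation of \Cref{sec:epstein} then expresses the surface, its shape operator $B$, signed area $\dd a$ and mean curvature in terms of $\dr u$, $\dr\bar\dr u$ and the Schwarzian-type quantity $\dr^2 u-(\dr u)^2$. These are governed by $f''/f'$ and the Schwarzian $\mathcal{S}(f)=(f''/f')'-\frac12 (f''/f')^2$.

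The first step is to derive the Krasnov--Schlenker style formula for the signed volume $V(\Gamma)$ and $\int H\,\dd a$. Following \cite{KrasnovSchlenker2008,BBVPW2025}, I would compute $V$ via a divergence (Stokes) formula, so that the volume between the two Epstein surfaces becomes a sum of surface integrals over $\Sigma_\pm$ plus possibly a boundary term along $\Gamma$. I expect this to give
\[
\VR(\Gamma) = \int_{\D} F\big(\tfrac{f''}{f'},\,(1-|z|^2)^2\mathcal{S}(f),\,z\big)\,\dd x\,\dd y + \int_{\D^*}(\text{analogous}) + c\log\left|\tfrac{f'(0)}{g'(\infty)}\right|,
\]
with $F$ at most quadratic in its first two arguments. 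For instance, terms like $|\mathcal S(f)|^2(1-|z|^2)^2$ and $|f''/f'|^2$ combined with hyperbolic weights are natural candidates, since $\|(1-|z|^2)^2\mathcal{S}(f)\|_{\Ll^2(\D,\rho)}$ with hyperbolic area $\rho$ is another Weil--Petersson norm. In fact, for $\Cc^{5,\alpha}$ curves the identity $4\VR=\Sr$ already makes this plausible. The point is to find a formula valid for all Weil--Petersson curves, in which each term is individually controlled.

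The second step is continuity term by term. I would use that $\Gamma\mapsto f''/f'\in \Ll^2(\D)$ is continuous and that $\Gamma\mapsto (1-|z|^2)^2\mathcal{S}(f)\in \Ll^2(\D,(1-|z|^2)^{-2}\dd x\dd y)$ is continuous for the Weil--Petersson topology (Takhtajan--Teo). Quadratic expressions are then continuous in $\Ll^1$, and the logarithmic term is continuous by the normalisation. Any non-quadratic factors (e.g.\ from the Jacobian of the possibly singular Epstein map, signed area) should be bounded polynomials in $(1-|z|^2)\,|f''/f'|$, which is uniformly bounded by Koebe. I would then conclude by dominated convergence together with a.e.\ convergence, using Vitali's theorem with the $\Ll^2$ convergence providing uniform integrability.

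The main obstacle is the boundary behaviour. The volume and the mean-curvature integral separately diverge near $\Gamma$, and only the combination \eqref{eq:intro-renormalized-volume} is finite. So I must make sure that the cancellation is done at the level of integrands (or via a cutoff $|z|<1-\eps$ with boundary terms that vanish uniformly in $j$), not only in the limit. I would first try truncating at the Epstein surfaces over $\{|z|<r\}$, computing the boundary contributions along the curve $f(\{|z|=r\})$, and showing that they are bounded by tails $\int_{r<|z|<1}|f''/f'|^2$. These tails are small uniformly in $j$ because of $\Ll^2$ convergence. Once this uniform tail estimate is in hand, continuity on compact parts, where everything converges locally uniformly by normal families, finishes the proof.
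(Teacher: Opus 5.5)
There is a genuine gap, and it sits exactly where you locate the main obstacle. The formula of your first step is only anticipated, not derived, and the tail estimate you propose in order to obtain it is false.

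\textbf{Why the naive integrand does not work.} After integrating by parts, the interior density of $\xi^{-2}\,\dd X\wedge\dd Y$ is $\sigma_f=\sigma_{\Id}^++L_f^++R_f^+$, where $\rho_+=1-|z|^2$, $\sigma_{\Id}^+=4/(\rho_+(2-\rho_+))$ and $L_f^+=8\Re(z\Nc(f))/(\rho_+(2-\rho_+)^2)$. Only $R_f^+$ is quadratic in $(\Nc(f),\rho_+^2\Sc(f))$. The term $L_f^+$ is linear in $\Nc(f)$ with weight $\rho_+^{-1}$, so it is not in $\Ll^1(\D)$ for a general WP curve. Neither dominated convergence nor Vitali applies to it: $V$ is only conditionally convergent, and its value depends on the cutoff.

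\textbf{Your truncation gives the wrong limit.} Take $\{|z|<r\}$ inside and the matched $\{|z|>1/r\}$ outside. The round parts cancel exactly, since $h_+(r)=h_-(1/r)$. Moreover $\int_{\D_r}L_f^+\,\dd^2z=0$ exactly, because $z\Nc(f)$ is holomorphic and vanishes at $0$; the same holds outside. So the truncated surface integrals of $\tfrac12\xi^{-2}\,\dd X\wedge\dd Y$ converge to $\tfrac12\int_\D R_f^+\,\dd^2z+\tfrac12\int_{\D^*}R_g^-\,\dd^2z$. But $V(\Gamma)$ is defined with the height cutoff $\eta(\xi/\eps)$. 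For analytic curves it equals this quantity plus $B(f,g)=\tfrac12\int_{\s^1}(u_+-u_-)\,\dd\theta+\tfrac12\bigl(\Er(u_+,u_+)+\Er(u_-,u_-)\bigr)$, where $u_\pm$ are the boundary traces of $\log|f'|$ and $\log|g'|$. Equivalently, $B(f,g)=\pi\log\frac{|f'(0)|}{|g'(\infty)|}+\tfrac12\int_\D|\Nc(f)|^2\,\dd^2z+\tfrac12\int_{\D^*}|\Nc(g)|^2\,\dd^2z$.

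Hence your boundary contributions along $f(\{|z|=r\})$ must converge to $B(f,g)$. This is an order-one quantity, not something bounded by $\int_{r<|z|<1}|f''/f'|^2$. These contributions are also the only possible source of the $\log|f'(0)/g'(\infty)|$ term in your anticipated formula. Since $\xi_f\approx h_+(r)\,|f'|$, the level set $\{\xi_f=\eps\}$ sits at a $\theta$-dependent radius. Integrating $\sigma_{\Id}^+$ and $L_f^+$ up to it produces $\tfrac12\int_{\s^1}u_+\,\dd\theta$ and $\tfrac12\Er(u_+,u_+)$.

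\textbf{What is missing.} You need a way to evaluate this conditionally convergent part with bounds uniform along a compact WP family, so that $V_\eps(\Gamma_j)\to V(\Gamma_j)$ uniformly in $j$. The paper does this in four steps.
\begin{enumerate}
\item It writes $L_f^+\,r\,\dd r=\frac{2}{1+r^2}\Lambda\Pcal_ru_+\,\dd t$ with $t=-\log h_+(r)$.
\item It replaces $\xi_f$ by $h_+e^{u_f}$ at negligible cost, using $\xi_f=h_+e^{u_f+v_f}$ with $|v_f|\le C\rho_+|\Nc(f)|$.
\item It turns the cut-off zeroth- and first-order terms into $\int\!\int\Phi_a(\Pcal_{r_{T,a}}u_+)\,\dd\theta\,\dd a$ and $\int\frac{2}{1+r_{T,a}^2}\Er\bigl(\Pcal_{r_{T,a}}u_+,\Phi_a(\Pcal_{r_{T,a}}u_+)\bigr)\,\dd a$.
\item It controls the first of these by a Poisson maximal function bound. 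It controls the second by the monotonicity $0\le\Er(\Pcal_ru,F(\Pcal_ru))\le\Er(u,F(u))$ for nondecreasing Lipschitz $F$, which dominates the integrand by $d_u(a)=\Er(u,\Phi_a(u))$, with $\int d_u=\tfrac12\Er(u,u)$.
\end{enumerate}
Combined with $V_\eps(\Gamma_j)\to V_\eps(\Gamma)$ for each fixed $\eps$ and the analytic approximants $f_a(z)=a^{-1}f(az)$, this yields continuity of $V$.

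Your premise that $V$ and $\int H\,\dd a$ diverge separately is also not the situation here. The term $\int_{\Sigma_+\cup\Sigma_-}H\,\dd a$ is finite and WP-continuous on its own (BBVPW, Theorem~1.4). The cancellation problem is internal to $V$: interior versus exterior round parts, and the linear term.
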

	
	As a corollary, by following the proof of \cite[Corollary 5.12]{BBVPW2025} verbatim, we obtain the following result.
	\begin{corollary}\label{cor:equality}
		For any Weil--Petersson curve $\Gamma\subset \Chat$, we have $\Sr(\Gamma) = 4\VR(\Gamma)$.
	\end{corollary}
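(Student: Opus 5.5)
The identity follows from \Cref{th:continuity} by approximation, the same way as \cite[Corollary 5.12]{BBVPW2025}. Fix a Weil--Petersson curve $\Gamma\subset\Chat$. We use two facts recalled in the introduction. The first is that $\Sr(\Gamma)=4\VR(\Gamma)$ holds for every Jordan curve of class $\Cc^{5,\alpha}$ \cite{BBVPW2025}. The second is that $\Sr$ is continuous with respect to the Weil--Petersson topology. This holds because $\Sr$ is a Kähler potential on the Weil--Petersson universal Teichmüller space \cite{TakhtajanTeo2006}, so it is a continuous (indeed real-analytic) function there. The plan is to approximate $\Gamma$ by smooth curves, apply the smooth identity to each approximant, and pass to the limit on both sides.

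First we build the approximating sequence. Let $f\colon\D\to\Omega_+$ be the uniformising map, and for $r_j\uparrow 1$ set $\Gamma_j\coloneqq f(\{|z|=r_j\})$. Each $\Gamma_j$ is an analytic Jordan curve, so in particular it is of class $\Cc^{5,\alpha}$. We must check that $\Gamma_j\to\Gamma$ in the Weil--Petersson topology. In the Takhtajan--Teo model this means the following. Normalise the welding homeomorphisms, or equivalently the pairs $(f_j,g_j)$, suitably. Then we need $\int_{\D}|f_j''/f_j' - f''/f'|^2\,\dd x\,\dd y\to 0$, or equivalently convergence of the Schwarzians in the weighted $\Ll^2$ norm.

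For the interior map we have $f_j(z)=f(r_jz)$. The required convergence is then the $\Ll^2$ continuity of dilations applied to $f''/f'\in\Ll^2(\D)$, which holds since $\Sr(\Gamma)<\infty$.

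The exterior maps $g_j$ are not explicit, and this is the one nontrivial point. The cleanest route is to invoke the known density of smooth curves in the Weil--Petersson class: the Weil--Petersson Teichmüller space $T_0(1)$ is a Hilbert manifold in which smooth curves are dense. In that model, convergence of the $\D$-side pre-Schwarzian in $\Ll^2$, together with the normalisations, implies convergence of the whole point of $T_0(1)$, and therefore also convergence of the $\D^*$-side data. This is exactly the ingredient used in \cite[Corollary 5.12]{BBVPW2025}, so we may cite it verbatim.

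Given this, the conclusion is immediate. On one hand, $\Sr(\Gamma_j)\to\Sr(\Gamma)$ by the continuity of $\Sr$. On the other hand, $\VR(\Gamma_j)\to\VR(\Gamma)$ by \Cref{th:continuity}. Since $\Sr(\Gamma_j)=4\VR(\Gamma_j)$ for every $j$, passing to the limit gives $\Sr(\Gamma)=4\VR(\Gamma)$.

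One point needs care: both functionals must be computed with compatible normalisations. $\Sr$ is Möbius invariant, since the term $\log|f'(0)/g'(\infty)|$ compensates the change of normalisation. Hence any normalisation of $\Gamma_j$, for instance $0\in\Omega_+$ and $\infty\in\Omega_-$, may be used. The main obstacle is thus only the Weil--Petersson convergence of the approximants, and this is standard.
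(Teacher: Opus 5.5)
Your proposal is correct and is essentially the paper's argument: the paper reruns the approximation proof of \cite[Corollary 5.12]{BBVPW2025}, namely the $\Cc^{5,\alpha}$ identity $\Sr=4\VR$ on analytic approximants, continuity of $\Sr$, and now \Cref{th:continuity} for $\VR$, and then passes to the limit exactly as you do. The one nontrivial input you flag, Weil--Petersson convergence of the approximants including the exterior maps, is supplied in the paper by \cite[Corollary 1.5]{ViklundWang2020} for the curves $\Gamma_a=f_a(\s^1)$, which coincide with your $f(\{|z|=r_j\})$ up to a scaling.
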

	
	Before explaining the proof, we make three remarks.
	\begin{enumerate}
		\item As explained below, the ideas of the proof are reminiscent of those used in the analysis of weak immersions for (generalised) Willmore surfaces. This approach is rather different from the various tools developed in \cite{Bishop2025}. It would be interesting to understand whether these ideas can be adapted to similar problems, for instance in \cite{BV2023} or \cite{Bishop2025}.
		
		\item As explained in \cite{BBVPW2025}, Corollary \ref{cor:equality}, together with \cite[Theorem 8.1]{Wang2019}, implies that $4\VR(\Gamma)=\pi\IL(\Gamma)$, where $\IL$ is the Loewner energy that appears in probability theory and conformal field theory, see for instance \cite{W2022,W2024}. The present work provides a holographic interpretation of the Loewner energy in full generality.
		
		\item When the Jordan curve $\Gamma$ has a corner, it is not Weil--Petersson and, in particular, its Loewner energy is infinite. A first-order asymptotic expansion of the Loewner energy along a well-chosen approximating sequence has been obtained in \cite{JV2026}. It would be interesting to understand whether the present approach can provide the next terms.
	\end{enumerate}

	\textbf{Sketch of proof.} A canonical approximation scheme for Weil--Petersson curves was introduced in \cite[Corollary 1.5]{ViklundWang2020}. Therefore, it is sufficient to work first with analytic Jordan curves and then prove the continuity of $\VR$ along these special approximation sequences.
	
	If $\Gamma\subset \Chat$ is an analytic Jordan curve splitting $\Chat$ into two simply connected open sets $\Omega_+$ and $\Omega_-$, we consider two uniformising maps $f\colon \D\to \Omega_+$ and $g\colon \D^*\to \Omega_-$.
	By integration by parts, we express the signed volume between the two Epstein surfaces induced by $\Gamma$ at the scale $\eps>0$ as the sum of two integrals over the images of these surfaces in \eqref{eq:volume-boundary-primitive}. In Lemma \ref{lm:decompo-sigmaf} and Lemma \ref{prop:exterior-epstein-flux}, we then compare the corresponding integrands $\sigma_f^+$ and $\sigma_g^-$ with their counterparts $\sigma_{\Id}^+$ and $\sigma_{\Id}^-$ in the case of the round circle $\Gamma=\s^1$: 
	\begin{equation*}
		\begin{cases}
			\sigma_f^+ = \sigma_{\Id}^+ + L_f^+ + R_f^+,\\[2mm]
			\sigma_g^- = \sigma_{\Id}^- + L_g^- + R_g^-.
		\end{cases}
	\end{equation*}
	The main observation is that, even though the signed volume between the two Epstein surfaces induced by $\s^1$ is $0$ because their images coincide, the two integrands $\sigma_{\Id}^-$ and $\sigma_{\Id}^+$ are not integrable. Therefore, when computing the sum of the two integrals, a cancellation occurs, which we record in \eqref{eq:round-integral-cancellation}. Hence, when computing the sum of the integrals of $\sigma_f^+$ and $\sigma_g^-$, one can restrict attention to the sum of the integrals of $(L_f^+ + R_f^+)$ and $(L_g^- + R_g^-)$. The terms $R_f^+$ and $R_g^-$ have exactly the integrability provided by the Weil--Petersson topology and can be regarded as lower-order terms. 
	
	To analyse the terms $L_f^+$ and $L_g^-$, we express them in Lemma \ref{lm:decompo-Lf} and Lemma \ref{lm:decompo-Lg} in terms of the radial derivatives of $\log |f'|$ and $\log|g'|$. We regard these logarithms as the harmonic extensions of their restrictions to $\s^1$. The key ingredient is then a monotonicity formula for a well-chosen $\W^{\frac{1}{2},2}$ scalar product in Lemma \ref{lem:Poisson-semigroup-monotonicity}. This formula is reminiscent of the monotonicity formulae for subharmonic functions and provides the key estimate needed to let the scale $\eps$ tend to $0$. We obtain an expression for $V(\Gamma)$ in \eqref{eq:analytic-finite-part-volume} that one can readily check to be continuous with respect to the Weil--Petersson topology.

	\textbf{Organisation.} In Section \ref{sec:normalization}, we introduce the notions used throughout the manuscript: we define the Weil--Petersson topology, give the parametrisation of Epstein surfaces, and discuss the Poisson kernel and the Dirichlet-to-Neumann operator. In Section \ref{sec:epstein-flux}, we discuss integration by parts and decompose the integrands. In Section \ref{sec:cutoff-volume-formula}, we pass to the limit $\eps\to 0$ to express the signed volume between Epstein surfaces in terms of the conformal maps. In Section \ref{sec:continuity}, we prove Theorem \ref{th:continuity}.
	
	\textbf{Use of AI.} The author used ChatGPT 5.6 to assist with conceptualisation and computations. All mathematical validation, final proof decisions, and final wording remain the sole responsibility of the human author.
	
	\textbf{Acknowledgements.} This project is funded by the Swiss National Science Foundation, project SNF 200020\textunderscore 219429. I thank Yilin Wang, Catherine Wolfram, and Viola Giovannini for many enlightening and motivating discussions. I also thank Tristan Rivière for his support.
	
	\section{Weil--Petersson curves and Epstein surfaces}\label{sec:normalization}
	
	Throughout the article, we write $\dd^2 z = \dd x\, \dd y$ for the volume form on $\C$. We write $\Chat =\C\cup\{\infty\}$, denote by $\D$ the open unit disk centred at the origin and by $\D_r$ the open disk of radius $r>0$ centred at the origin, and set $\D^*\coloneqq \C\setminus \overline{\D}$.
	
	\subsection{Uniformizing maps for Jordan curves}\label{sec:uniformisation}
	
	Let $\Gamma\subset \Chat$ be a Jordan curve and consider the two connected components $\Chat\setminus\Gamma \eqqcolon \Omega_+\sqcup\Omega_-$, where $\infty\in \Omega_-$ and $0\in \Omega_+$. In particular, $\Omega_+$ is bounded in $\C$. We choose two conformal maps $f\colon \D\to \Omega_+$ and $g\colon \D^*\to \Omega_-$. Up to normalising $\Gamma$ using Möbius transformations, we may impose the following conditions
	\begin{equation}\label{eq:normalisation}
		\begin{cases} 
			f(-1)=-1,\qquad f(-\ii)=-\ii,\qquad f(1)=1,\\[2mm]
			g(-1)=-1,\qquad g(-\ii)=-\ii,\qquad g(1)=1.
		\end{cases} 
	\end{equation}
	We denote the pre-Schwarzian and Schwarzian derivatives by
	\begin{equation*}
		\Nc(f) \coloneqq \frac{f''}{f'}, \qquad \text{ and }\qquad \Sc(f) \coloneqq \left(\frac{f''}{f'}\right)' - \frac{1}{2}\, \left( \frac{f''}{f'}\right)^2.
	\end{equation*}
	
	\subsection{Weil--Petersson topology} A Jordan curve $\Gamma\subset \Chat$ is said to be a Weil--Petersson curve (WP curve) if 
	\begin{equation*}
		\int_{\D} \left|\Nc(f)\right|^2\, \dd^2 z  + \int_{\D^*}\left|\Nc(g)\right|^2\, \dd^2 z <+\infty. 
	\end{equation*}
	Equivalently \cite[Chapter 2, Theorem 1.12]{TakhtajanTeo2006}, it holds
	\begin{equation*}
		\int_{\D} \left|\Sc(f)(z)\right|^2 (1-|z|^2)^2\, \dd^2 z + \int_{\D^*} \left|\Sc(g)(z)\right|^2 (|z|^2-1)^2\, \dd^2 z <+\infty.
	\end{equation*}
	By \cite[Theorem 1.1]{Bishop2025}, this is equivalent to $\Gamma$ having an arclength parametrisation in $\W^{\frac{3}{2},2}(I;\Chat)$ for some interval $I$. The convergence of a sequence $(\Gamma_k)_{k\in\N}$ of WP curves to a WP curve $\Gamma$ is characterised by the following properties:
	\begin{equation}\label{eq:WP-convergence-data}
			\|\Nc(f_k)-\Nc(f)\|_{L^2(\D)} + \|\Nc(g_k)-\Nc(g)\|_{L^2(\D^*)}  \xrightarrow[k\to \infty]{} 0.
	\end{equation}
	By \cite[Lemma A.1 and A.5]{TakhtajanTeo2006}, \eqref{eq:WP-convergence-data} is equivalent to 
	\begin{equation}\label{eq:WP-convergence-data2}
		\int_{\D} \left|\Sc(f_k)(z) - \Sc(f)(z)\right|^2 (1-|z|^2)^2\, \dd^2 z + \int_{\D^*} \left|\Sc(g_k)(z) - \Sc(g)(z)\right|^2 (|z|^2-1)^2\, \dd^2 z \xrightarrow[k\to \infty]{} 0.
	\end{equation}
	
	Let $u_+$ and $u_-$ be the boundary traces of $\log|f'|$ and $\log|g'|$. Then $\log|f'|$ and $\log|g'|$ are the harmonic extensions of $u_+$ and $u_-$, respectively. Consequently,
	\begin{equation}\label{eq:trace-energies}
		[u_+]_{\dot{\W}^{\frac{1}{2},2}(\s^1)}=\int_{\D}|\Nc(f)|^2\,\dd^2 z,
		\qquad
		[u_-]_{\dot{\W}^{\frac{1}{2},2}(\s^1)}=\int_{\D^*}|\Nc(g)|^2\,\dd^2 z,
	\end{equation}
	where, for a function $v\colon \s^1\to\C$, we have written \cite[Theorem 2.5]{A2010}
	\begin{equation*}
		[v]_{\dot{\W}^{\frac{1}{2},2}(\s^1)} \coloneqq  \frac{1}{2\pi} \int_{\s^1\times \s^1} \frac{|v(\xi)-v(\zeta)|^2}{|\xi-\zeta|^2}\, \dd \xi\, \dd \zeta.
	\end{equation*}
	For a WP curve, \cite[Chapter 2, Theorem 1.6]{TakhtajanTeo2006} gives
	\begin{equation}\label{eq:boundary-Schwarzian}
		\begin{cases} 
			(1-|z|^2)|\Nc(f)(z)| + (1-|z|^2)^2|\Sc(f)(z)| \xrightarrow[|z|\to 1^-]{} 0\\[2mm]
			(|z|^2-1)|\Nc(g)(z)| + (|z|^2-1)^2|\Sc(g)(z)| \xrightarrow[|z|\to 1^+]{} 0.
		\end{cases} 
	\end{equation}
	
	With the normalisation \eqref{eq:normalisation}, the universal Liouville action is defined in \cite[Definition 2.13]{TakhtajanTeo2006} by
	\begin{equation}\label{eq:Liouville-Dirichlet-form}
		\Sr(f,g)= \int_\D|\Nc(f)|^2\, \dd^2 z+ \int_{\D^*}|\Nc(g)|^2\, \dd^2 z + 4\pi\log \frac{|f'(0)|}{|g'(\infty)|}.
	\end{equation}

	\subsection{Approximation of WP curves} Let $\Gamma$ be a WP curve with uniformising maps $f$ and $g$ as in \Cref{sec:uniformisation}.
	For $a\in(0,1)$, we define
	\begin{equation}\label{eq:canonical-radial-map}
		f_a(z)=a^{-1}f(az),\qquad \Omega_a=f_a(\D),\qquad\Gamma_a=f_a(\s^1).
	\end{equation}
	Let $g_a\colon \D^*\to\Chat\setminus\overline{\Omega_a}$ be the normalised complementary map. For $a_j=\frac{j-1}{j}$ with $j\geq 2$, we denote 
	\begin{equation}\label{eq:approximation}
		f_j = f_{a_j}, \qquad g_j = g_{a_j}, \qquad \Gamma_j = \Gamma_{a_j}.
	\end{equation} 
	Each $\Gamma_j$ is analytic. Its normalised conformal maps converge to those of $\Gamma$ in the Weil--Petersson topology described in \eqref{eq:WP-convergence-data}.
	For the maps in \eqref{eq:canonical-radial-map}, we have
	\begin{equation}\label{eq:canonical-P-Q}
		\Nc(f_a)(z)=a\, \Nc(f)(az), \qquad \text{and }\qquad  \Sc(f_a)(z) = a^2\, \Sc(f)(az).
	\end{equation}
	Hence, $\Nc(f_a)\to \Nc(f)$ in $\Ll^2(\D)$ and $\Sc(f_a)\to \Sc(f)$ in $\Ll^2\left(\D,(1-|z|^2)^2\, \dd^2 z \right)$. By \cite[Corollary 1.5]{ViklundWang2020}, we have $\Gamma_a\to \Gamma$ as $a\to 1^-$ in the Weil--Petersson topology.
	
	We will frequently use the following result.
	\begin{theorem}[Theorem 1 in \cite{BL1983}]\label{th:Brezis-Lieb}
		Let $(X,\mu)$ be measure space and $(f_k)_{k\in\N},f\in \Ll^1(X)$. Assume that $f_k\to f$ a.e.\ in $X$. Then, we have $\|f_k\|_{\Ll^1(X,\mu)}\xrightarrow[k\to\infty]{} \|f\|_{\Ll^1(X,\mu)}$ if and only if $f_k\to f$ in $\Ll^1(X,\mu)$.
	\end{theorem}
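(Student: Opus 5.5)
This is the Brezis--Lieb lemma in its $\Ll^1$ form, so the plan is the standard one. The direction ``$f_k\to f$ in $\Ll^1$ implies convergence of norms'' is immediate from the reverse triangle inequality:
\begin{equation*}
\bigl|\|f_k\|_{\Ll^1}-\|f\|_{\Ll^1}\bigr|\leq \|f_k-f\|_{\Ll^1}.
\end{equation*}
The substance is the converse, which I would prove by a domination argument that reduces everything to the dominated convergence theorem.

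For the converse, assume $f_k\to f$ a.e.\ and $\|f_k\|_{\Ll^1}\to\|f\|_{\Ll^1}$. Define
\begin{equation*}
h_k\coloneqq |f_k|+|f|-|f_k-f|.
\end{equation*}
By the triangle inequality $h_k\geq 0$. Since $f_k\to f$ a.e., we also have $h_k\to 2|f|$ a.e.

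Applying Fatou's lemma to $h_k$ gives
\begin{equation*}
2\|f\|_{\Ll^1}\leq \liminf_{k\to\infty}\int_X h_k\,\dd\mu = 2\|f\|_{\Ll^1}-\limsup_{k\to\infty}\|f_k-f\|_{\Ll^1}.
\end{equation*}
The equality on the right uses the hypothesis $\|f_k\|_{\Ll^1}\to\|f\|_{\Ll^1}$ together with the finiteness of $\|f\|_{\Ll^1}$. Subtracting $2\|f\|_{\Ll^1}$ yields $\limsup_k\|f_k-f\|_{\Ll^1}\leq 0$, i.e.\ $f_k\to f$ in $\Ll^1$.

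An equivalent route is to write $|f_k-f|\leq |f_k|+|f|$ and apply the generalised dominated convergence theorem, using that the dominating functions $|f_k|+|f|$ converge a.e.\ and in integral to $2|f|$.

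There is no real obstacle. The one point needing care is that every integral involved is finite, so that subtracting $2\|f\|_{\Ll^1}$ is legitimate. This holds because $f\in\Ll^1$ and $f_k\in\Ll^1$ by assumption. The argument is identical for complex-valued functions.
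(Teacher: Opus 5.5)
Your proof is correct. The paper gives no proof of this statement; it only cites Brezis--Lieb. Their result is the refinement of Fatou's lemma for $0<p<\infty$: if $f_k\to f$ a.e.\ and $\sup_k\|f_k\|_{\Ll^p}<\infty$, then $\|f_k\|_{\Ll^p}^p-\|f_k-f\|_{\Ll^p}^p\to\|f\|_{\Ll^p}^p$. They prove it by dominated convergence, after an $\varepsilon$-splitting inequality of the form $\bigl||a+b|^p-|a|^p\bigr|\leq \varepsilon|a|^p+C_\varepsilon|b|^p$.

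Your argument is the classical Scheff\'e-type route specific to $p=1$, and at $p=1$ the two routes coincide. Since $|f_k|-|f_k-f|\leq|f|$, your function satisfies $0\leq h_k\leq 2|f|$. So you may use dominated convergence instead of Fatou, which gives $\int_X h_k\,\dd\mu\to 2\|f\|_{\Ll^1}$ outright. That is, $\|f_k\|_{\Ll^1}-\|f_k-f\|_{\Ll^1}\to\|f\|_{\Ll^1}$ for any $f_k,f\in\Ll^1$ with $f_k\to f$ a.e., which is exactly the $p=1$ Brezis--Lieb identity. It yields both implications at once, since $\|f_k\|_{\Ll^1}-\|f\|_{\Ll^1}=\|f_k-f\|_{\Ll^1}+o(1)$.

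What the Brezis--Lieb machinery adds is the case $p\neq1$ and more general integrands, where no pointwise domination by $|f|$ is available. For the $\Ll^1$ statement actually used in the paper, your self-contained argument suffices.
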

	
	\subsection{Epstein surfaces}\label{sec:epstein}
	In this section, we record the explicit parametrisation of Epstein surfaces provided in \cite[Section 3.3]{BBVPW2025}.

	Let $\Omega\subset \Chat$ be a simply connected bounded open set, and let $e^{\vp}\, |\dd z|^2$ be the associated hyperbolic metric. The Epstein surface of $\Omega$ is the map $\Ep_{\Omega}\colon z\in \Omega\mapsto (Z,\xi)\in \C\times (0,+\infty) = \Hd^3$ defined by
	\begin{equation*}
		\xi \coloneqq \frac{ 2 e^{-\vp/2} }{ 1+|\vp_{\bz}|^2\, e^{-\vp} }, \qquad \text{ and }\qquad Z \coloneqq  z + \frac{2\, \vp_{\bz}\, e^{-\vp} }{ 1+|\vp_{\bz}|^2\, e^{-\vp} }.
	\end{equation*}
	The Euclidean unit normal vector of $\Ep_{\Omega}$ at $z\in\Omega$ is given by 
	\begin{equation}\label{eq:normal-Epstein}
		\vec{\eta} \coloneqq \left( \frac{ 2\, \vp_{\bz}\, e^{-\vp/2} }{ 1+|\vp_{\bz}|^2\, e^{-\vp} } , \frac{ 1 - |\vp_{\bz}|^2\, e^{-\vp}}{1+|\vp_{\bz}|^2\, e^{-\vp}} \right).
	\end{equation}
	By \cite[Lemma 3.16]{BBVPW2025}, we have 
	\begin{equation}\label{eq:coordinates-Epstein}
		\begin{cases} 
			\displaystyle \xi(f(z)) = \frac{|f'(z)|\, (1-|z|^2)}{1+ \left|\overline{z}- \frac{1}{2}\, \Nc(f)(z)\, (1-|z|^2) \right|^2 }, \\[5mm]
			\displaystyle Z(f(z)) = f(z) + \frac{ \left(z - \frac{1}{2}\, \overline{\Nc(f)(z)}\, (1-|z|^2) \right)\, f'(z)\, (1-|z|^2) }{1 + \left|\overline{z}- \frac{1}{2}\, \Nc(f)(z)\, (1-|z|^2) \right|^2 }.
		\end{cases} 
	\end{equation}
	The formula \eqref{eq:normal-Epstein} can also be written as
	\begin{equation}\label{eq:normal-Epstein2}
		\vec{\eta}(f(z)) \coloneqq \left( \frac{ 2\, \frac{|f'(z)|}{\overline{f'(z)} }\, \left(z - \frac{1}{2}\, \overline{\Nc(f)(z)}\, (1-|z|^2)\right)}{ 1+ \left|z - \frac{1}{2}\, \overline{\Nc(f)(z)}\, (1-|z|^2)\right|^2 } , \frac{ 1 - \left|z - \frac{1}{2}\, \overline{\Nc(f)(z)}\, (1-|z|^2)\right|^2 }{1+\left|z - \frac{1}{2}\, \overline{\Nc(f)(z)}\, (1-|z|^2)\right|^2 } \right).
	\end{equation}
	For instance, if $\Gamma=\s^1$, we can choose $f(z)=z$ and obtain 
	\begin{equation}\label{eq:Epstein-round}
		\begin{cases} 
			\displaystyle \xi_{\Id}(z) = \frac{1-|z|^2}{1+ |z|^2 }, \\[5mm]
			\displaystyle Z_{\Id}(z) =  \frac{ 2z }{1 + |z|^2 }.
		\end{cases} 
	\end{equation}
	
	\subsection{Poisson kernel and Dirichlet-to-Neumann operator}\label{sec:Poisson}
	
	For $0\leq r<1$, we define the Poisson kernel by
	\begin{equation*}
		\forall \alpha\in [0,2\pi],\qquad p_r(\alpha) =\frac{1-r^2}{1-2r\cos\alpha+r^2}.
	\end{equation*}
	For a function $u\in \W^{\frac{1}{2},2}(\s^1,\R)$, its Poisson extension to the circle of radius $r$ is defined by
	\begin{equation}\label{eq:Poisson-operator-definition}
		(\Pcal_ru)(\theta) \coloneqq \frac{1}{2\pi}\int_0^{2\pi} p_r(\theta-\varphi)u(\varphi)\,\dd\varphi.
	\end{equation}
	The function $U_u(re^{\ii\theta})\coloneqq(\Pcal_ru)(\theta)$ is the harmonic extension of $u$ to $\D$. 
	We expand $u$ in a Fourier series:
	\begin{equation*}
	u(\theta)=\sum_{n\in\Z}u_ne^{\ii n\theta}, \qquad \text{ with }\qquad u_n=\frac{1}{2\pi}\int_0^{2\pi} u(\theta)e^{-\ii n\theta}\,\dd\theta.
	\end{equation*}
	Then $\Pcal_r u$ is given by
	\begin{equation}\label{eq:Poisson-Fourier-definition}
		\Pcal_r u =\sum_{n\in\Z}r^{|n|}u_ne^{\ii n\theta}.
	\end{equation}
	The Dirichlet-to-Neumann operator on $\s^1$ is defined by
	\begin{equation}\label{eq:Lambda-geometric-definition}
		\Lambda u \coloneqq \left.\partial_rU_u(re^{\ii\theta})\right|_{r=1} = \sum_{n\in\mathbb Z}|n|u_ne^{in\theta}.
	\end{equation}
	Therefore, the operator $\Lambda\colon \W^{\frac{1}{2},2}(\s^1)\to \W^{-\frac{1}{2},2}(\s^1)$ is bounded\footnote{Indeed, $\sum_{n\neq 0} |n|^{-1} |(\Lambda u)_n|^2 = \sum_{n\neq 0} |n|\, |u_n|^2$.}.
	The duality between $\W^{\frac{1}{2},2}(\s^1)$ and $\W^{-\frac{1}{2},2}(\s^1)$ is given by
	\begin{equation}\label{eq:Dirichlet-form-definition}
		\Er(u,v)\coloneqq \scal{\Lambda u}{v}_{\W^{-\frac{1}{2},2},\W^{\frac{1}{2},2}} =2\pi\sum_{n\in\mathbb Z}|n|u_n\overline{v_n}.
	\end{equation}
	The family $(\Pcal_r)_{0<r<1}$ satisfies $\Pcal_r\Pcal_\rho=\Pcal_{r\rho}$. 
	In the additive variable \(s=-\log r\), this family becomes the Poisson semigroup, and \(-\Lambda\) is its infinitesimal generator \cite[Chapter II, Section 2]{SteinTopics1970}:
	\begin{equation}\label{eq:Poisson-semigroup-generator}
		\Pcal_{e^{-s}}=e^{-s\Lambda}, \qquad \text{ and }\qquad 
		-\left.\frac{\dd}{\dd s}\mathcal P_{e^{-s}}u \right|_{s=0} =\Lambda u.
	\end{equation}

	\section{First fundamental form of Epstein surfaces}\label{sec:epstein-flux}
	
	In the whole section, we fix $\Gamma$ a WP curve with the notations of Section \ref{sec:normalization}. 
	
	\subsection{Integration by parts}\label{sec:IPP}
	
	We work in upper half-space coordinates
	\begin{equation*}
		\mathbb H^3=\{(X,Y,\xi):\xi>0\}, \qquad \text{and }\qquad \dd\vol_{\Hd^3} =\xi^{-3}\dd X\wedge\dd Y\wedge\dd\xi.
	\end{equation*}
	For the rest of the paper, we fix a cutoff function $\eta\colon [0,\infty)\to[0,1]$ such that $\eta'\geq 0$, $\eta=0$ on $[0,1]$ and $\eta=1$ on $[2,\infty)$. We define
	\begin{equation}\label{eq:p-epsilon}
		p_\eps(t)\coloneqq \int_0^t\eta(s/\eps)\,\frac{\dd s}{s^3},
		\qquad \text{ and }\qquad 
		\alpha_\eps \coloneqq p_\eps(\xi)\,\dd X\wedge\dd Y.
	\end{equation}
	Since $p_\eps'(\xi)=\eta(\xi/\eps)\xi^{-3}$, we have
	\begin{equation}\label{eq:primitive-volume}
		\dd\alpha_\eps
		= p_\eps'(\xi)\, \dd\xi\wedge\dd X\wedge\dd Y = \frac{\eta(\xi/\eps)}{\xi^3}\, \dd X\wedge\dd Y \wedge\dd\xi
		=\eta(\xi/\eps)\, \dd\vol_{\Hd^3}.
	\end{equation}
	By \cite[Remark 4.3]{BBVPW2025}, there exists a differentiable map $\Phi_\Gamma\colon \overline{\Hd^3}\to \overline{\Hd^3}$, with $\Phi_{\Gamma} = \Ep_{\Omega_-}$ on $\Omega_-$ and $\Phi_{\Gamma} = \Ep_{\Omega_+}$ on $\Omega_+$.  Integrating by parts and using \eqref{eq:primitive-volume}, we obtain
	\begin{equation}\label{eq:volume-boundary-primitive}
		V_\eps(\Gamma) \coloneqq\int_{\mathbb H^3}\Phi_\Gamma^*\bigl(\eta(\xi/\eps)\, \dd \vol_{\Hd^3}\bigr) = -\sum_{\sigma\in\{+,-\}} \int_{\Omega_\sigma} \Ep_{\Omega_\sigma}^*\alpha_\eps.
	\end{equation}
	The sign is negative because the boundary orientation induced by $\dd x\wedge\dd y\wedge\dd\xi$ is opposite to the complex orientation on $\{\xi=0\}$.

\subsection{Horizontal slices}

We now compute $\Ep_{\Omega_+}^*(\xi^{-2}\dd X\wedge\dd Y)$. For $z\in \D$, we denote
\begin{equation}\label{eq:flux-notation}
 	\rho_+ \coloneqq 1-|z|^2, \qquad \text{ and }\qquad \psi_f \coloneqq \overline{z}-\frac{\rho_+}{2}\, \Nc(f)(z).
\end{equation}
By a slight abuse of the notation introduced in \Cref{sec:epstein}, we write $(Z,\xi)=\Ep_{\Omega_+}\circ f$.
\begin{lemma}\label{prop:exact-epstein-flux}
With the above notation, we have
\begin{equation}\label{eq:exact-epstein-flux}
 (\Ep_{\Omega_+}\circ f)^*\bigl(\xi^{-2}\dd X\wedge\dd Y \bigr) = \sigma_f\, \dd x\wedge \dd y,
\end{equation}
where 
\begin{equation}\label{eq:def-sigma}
	\sigma_f \coloneqq \frac{1-|\psi_f|^2}{1+|\psi_f|^2} \left(\frac{4}{\rho_+^2}-\frac{\rho_+^2}{4}|\Sc(f)|^2\right).
\end{equation}
\end{lemma}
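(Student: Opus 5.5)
The plan is to split the pulled-back form into two factors: the Euclidean projection of the surface onto the horizontal plane, and the hyperbolic area element of the Epstein surface. For an immersed oriented surface in $\Hd^3$ with Euclidean unit normal $\vec\eta=(\eta_h,\eta_3)$, the restriction of $\dd X\wedge \dd Y$ to the surface is $\eta_3$ times the Euclidean area form. The hyperbolic area form is $\xi^{-2}$ times the Euclidean one. Hence
\begin{equation*}
	(\Ep_{\Omega_+}\circ f)^*\bigl(\xi^{-2}\dd X\wedge \dd Y\bigr) = \eta_3\circ f \cdot \dd a_{\Hd^3},
\end{equation*}
where $\dd a_{\Hd^3}$ is the signed hyperbolic area form pulled back to $\D$. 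At points where the map is not immersed, both sides vanish, and the identity extends by continuity and analyticity.

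By \eqref{eq:normal-Epstein2}, the vertical component is $\eta_3 = \frac{1-|\psi_f|^2}{1+|\psi_f|^2}$, since $|z-\frac12\overline{\Nc(f)}\rho_+| = |\psi_f|$. This produces the first factor of \eqref{eq:def-sigma}. The orientation and sign convention should be checked on the round case $f=\Id$, using \eqref{eq:Epstein-round}.

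It then remains to show that the signed hyperbolic area density of the Epstein surface in the coordinate $z\in\D$ equals $\frac{4}{\rho_+^2}-\frac{\rho_+^2}{4}|\Sc(f)|^2$. I expect this to be the main computation. My plan is to use the naturality of the Epstein construction under Möbius transformations: an isometry of $\Hd^3$ acting on $\Omega$ maps Epstein surfaces to Epstein surfaces. In addition, reparametrising $f$ by a disk automorphism $T$ changes $\Sc(f)$ into $\Sc(f\circ T)=(\Sc(f)\circ T)\,T'^2$, and $\rho_+^2|\Sc(f)|$ transforms correctly because $(1-|T(z)|^2)=|T'(z)|(1-|z|^2)$. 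Both sides of the claimed identity are therefore densities that transform the same way. This reduces the proof to the point $z=0$, after replacing $f$ by $A\circ f\circ T$ with a suitable Möbius transformation $A$ normalising $f(0)=0$, $f'(0)=1$, $f''(0)=0$. Then $\Nc(f)(0)=0$ and $\Sc(f)(0)=f'''(0)$.

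At $z=0$ I will expand \eqref{eq:coordinates-Epstein} to first order in $z$ and $\bz$, writing $f(z)=z+\frac{c}{6}z^3+O(|z|^4)$ with $c=\Sc(f)(0)$. We have $\Nc(f)(z)=cz+O(|z|^2)$, so $\psi_f=\bz-\frac{c}{2}z+O(|z|^2)$ and $\xi=1+O(|z|^2)$. For $Z$, note that $\bar\psi_f = z-\frac{\bar c}{2}\bz+\dots$ (the numerator of $Z$ contains $\overline{\psi_f}$), which gives $Z = 2z-\frac{\bar c}{2}\bz+O(|z|^2)$. The Jacobian of $z\mapsto Z$ at $0$ is $|Z_z|^2-|Z_{\bz}|^2 = 4-\frac{|c|^2}{4}$, while $\xi(0)=1$ and $\eta_3(0)=1$. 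This matches $\frac{4}{\rho_+^2}-\frac{\rho_+^2}{4}|\Sc(f)|^2$ at $0$, with $\rho_+=1$.

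The obstacle I anticipate is bookkeeping in this first-order expansion, in particular making sure that no $O(|z|)$ term in $\xi$ or in the denominator contributes. It also requires confirming that the Möbius normalisation preserves the left-hand side: the form $\xi^{-2}\dd X\wedge\dd Y$ is not isometry-invariant, whereas $\eta_3\,\dd a_{\Hd^3}$ is not either. To handle this, I would apply the invariance argument only to $\dd a_{\Hd^3}$ and compute $\eta_3$ directly from \eqref{eq:normal-Epstein2}. Thus the reduction to $z=0$ is used only for the isometry-invariant area density.
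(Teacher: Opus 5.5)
Your proposal is correct. Its first step is the same as the paper's: the paper also writes $E^*(\dd X\wedge\dd Y)=(\dr_xE\times\dr_yE)_3\,\dd x\wedge\dd y$, factors this as $\vec\eta_3$ times the signed hyperbolic area, and reads off $\vec\eta_3=\frac{1-|\psi_f|^2}{1+|\psi_f|^2}$ from \eqref{eq:normal-Epstein2}.

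You genuinely differ on the area factor. The paper does not compute it. It imports it from Equations (3.4)--(3.5) of \cite{BBVPW2025}, that is, from the known first fundamental form of Epstein surfaces, written as $\frac14\bigl(2-\frac{\rho_+^2}{2}|\Sc(f)|\bigr)\bigl(2+\frac{\rho_+^2}{2}|\Sc(f)|\bigr)\frac{4}{\rho_+^2}$. You instead derive it from three ingredients:
\begin{itemize}
\item M\"obius equivariance, $\Ep_{A\Omega}\circ A=\hat A\circ\Ep_{\Omega}$;
\item the transformation laws $\Sc(f\circ T)=(\Sc(f)\circ T)\,T'^2$ and $1-|T(z)|^2=|T'(z)|(1-|z|^2)$;
\item a one-jet computation at a normalised point.
\end{itemize}
The one-jet computation checks out. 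One has $\xi=1+O(|z|^2)$, so the tangent plane at $0$ is horizontal with $\vec\eta(0)$ vertical. One also has $Z=2z-\frac{\bar c}{2}\bz+O(|z|^2)$, which gives the signed density $4-\frac{|c|^2}{4}$, matching the paper's formula at $\rho_+=1$.

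The paper's route is a one-line citation. Yours is self-contained given only \eqref{eq:coordinates-Epstein}, and it shows that the density is forced by equivariance and a single pointwise value. The price is two points you should state explicitly.
\begin{itemize}
\item The Epstein normal must itself be equivariant: it is the unit tangent to the geodesic from $z$ through $E(z)$, pointing away from $z$, and one checks this against \eqref{eq:normal-Epstein}. This is what lets the \emph{signed} area, not just its absolute value, pass through the orientation-preserving isometry $\hat A$.
\item $A(\Omega)$ may contain $\infty$, since the pole of the M\"obius map killing $f''(0)$ can lie in $\Omega$. So you need that \eqref{eq:coordinates-Epstein} is a local formula, valid wherever $A\circ f$ is finite, which holds near $z=0$. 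The boundedness assumption in the paper's statement of that formula plays no role there.
\end{itemize}
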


\begin{proof}
We denote $E\coloneqq \Ep_{\Omega_+}\circ f$. With the notation \eqref{eq:flux-notation}, the formulae \eqref{eq:coordinates-Epstein} can be rewritten as 
\begin{equation}\label{eq:epstein-f-coordinates}
 \xi=\frac{|f'|\, \rho_+ }{1+|\psi_f|^2}, \qquad \text{ and }\qquad  Z=f+\frac{f'\, \rho_+\,\overline{\psi_f}}{1+|\psi_f|^2}.
\end{equation}
The vertical component of the oriented Euclidean unit normal is given by \eqref{eq:normal-Epstein2}
\begin{equation}\label{eq:vertical-normal}
 \vec{\eta}_3=\frac{1-|\psi_f|^2}{1+|\psi_f|^2}.
\end{equation}
The signed area form induced by $E$ is obtained by combining Equations (3.4) and (3.5) of \cite{BBVPW2025}:
\begin{equation}\label{eq:vol-Epstein}
	\begin{aligned} 
 \dd a &=\frac{1}{4} \left( 2-2\, \frac{\rho_+^2}{4}\, |\Sc(f)| \right) \left( 2+2\, \frac{\rho_+^2}{4}\, |\Sc(f)| \right)\, \frac{4}{\rho_+^2}\,\dd x\wedge \dd y\\[2mm]
 &=\left(1- \frac{\rho_+^4}{16}\, |\Sc(f)|^2 \right)\frac{4}{\rho_+^2}\, \dd x\wedge \dd y\\[2mm]
 & =\left(\frac{4}{\rho_+^2}-\frac{\rho_+^2}{4}|\Sc(f)|^2\right) \, \dd x\wedge \dd y.
 \end{aligned} 
\end{equation}
Therefore, we have 
\begin{equation*}
 E^*(\dd X\wedge\dd Y) =\bigl( (\dr_x E_1)(\dr_y E_2) - (\dr_y E_1)(\dr_x E_2) \bigr)\,\dd x\wedge\dd y =  \left( \dr_x E\times \dr_y E\right)_3\,\dd x\wedge\dd y.
\end{equation*} 
Since $\dr_x E\times\dr_y E$ is normal to $\Span(\dr_x E,\dr_y E)$ and $\vec{\eta}$ is the Gauss map of $E$, we obtain 
\begin{equation*} 
 E^*(\xi^{-2}\, \dd X\wedge\dd Y) = \xi^{-2}\, \scal{ \dr_x E \times \dr_y E}{\vec{\eta}}_{\R^3}\, \vec{\eta}_3\,\dd x\wedge\dd y = \vec{\eta}_3\, \dd a.
\end{equation*}
We conclude by substituting \eqref{eq:vol-Epstein} into the preceding equation.
\end{proof}

We now expand \eqref{eq:exact-epstein-flux} into three parts: the leading-order term corresponding to the round case, a second term that can, roughly speaking, be viewed as linear in $\Nc$, and a remainder that is quadratic in $\rho_+^2\, |\Sc|$ and $\rho_+|\Nc|$. The latter two terms converge to $0$ near the boundary by \eqref{eq:boundary-Schwarzian}. When $\Gamma$ is a round circle, we can choose $f(z)=z$ and obtain $\psi_f=\overline{z}$ and $\Sc(f)=0$. Therefore, we have
\begin{equation}\label{eq:sigma-id-interior}
	\sigma_{\Id}^+ = \frac{4}{(1+|z|^2)(1-|z|^2)} = \frac{4}{\rho_+\, (2-\rho_+)}.
\end{equation}

\begin{lemma}\label{lm:decompo-sigmaf}
	There exist functions $L_f^+\colon\D\to \C$ and $R_f^+\colon \D\to \C$ such that 
	\begin{equation*}
		\sigma_f =\sigma_{\Id}^+ + L_f^+ + R^+_f,
	\end{equation*}
	where 
	\begin{enumerate}
		\item the function $L_f^+$ is given by 
		\begin{equation*}
			L_f^+ \coloneqq \frac{ 8\, \Re\left(z\, \Nc(f)\right) }{ \rho_+\, (2-\rho_+)^2 },
		\end{equation*}
		
		\item the remainder $R_f^+$ satisfies the pointwise estimate
		\begin{equation*}
			|R_f^+| \leq C_0\, \left(|\Nc(f)|^2 + \rho_+^2\, |\Sc(f)|^2 \right) \qquad \text{ in }\D .
		\end{equation*}
	\end{enumerate}
	The constant $C_0$ depends only on the rate of convergence in \eqref{eq:boundary-Schwarzian} and can be chosen uniformly along sequences of curves converging in the WP topology. 
\end{lemma}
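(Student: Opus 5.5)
The plan is to expand $\sigma_f$ from \eqref{eq:def-sigma} around the round case by treating $w\coloneqq \frac{\rho_+}{2}\Nc(f)$ and $s\coloneqq \frac{\rho_+^2}{4}|\Sc(f)|$ as small quantities. By \eqref{eq:boundary-Schwarzian} they tend to $0$ at $\partial\D$. In the interior they are bounded on compact subsets, because $f$ is conformal and $\Nc(f),\Sc(f)$ are holomorphic. So $|w|+s\le M$ on $\D$, with $M$ depending only on the rate in \eqref{eq:boundary-Schwarzian} and on $\sup$ of $|\Nc(f)|,|\Sc(f)|$ over a fixed compact disc. Both quantities are controlled uniformly along WP-convergent sequences: the $\Ll^2$ convergence \eqref{eq:WP-convergence-data} together with the mean value property gives locally uniform convergence of $\Nc(f_k)$, and hence of $\Sc(f_k)$.

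\textbf{Step 1.} Write $\sigma_f = A(\psi_f)\,\bigl(\frac{4}{\rho_+^2}-\frac{\rho_+^2}{4}|\Sc(f)|^2\bigr)$ with $A(\psi)=\frac{1-|\psi|^2}{1+|\psi|^2}=\frac{2}{1+|\psi|^2}-1$. The Schwarzian contribution is $-A(\psi_f)\frac{\rho_+^2}{4}|\Sc(f)|^2$. Since $|A|\le1$, this is bounded by $\rho_+^2|\Sc(f)|^2$ and goes directly into $R_f^+$.

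\textbf{Step 2.} Expand $\frac{4}{\rho_+^2}A(\psi_f)$. We have $|\psi_f|^2=|z|^2-2\Re(z w)+|w|^2$ with $\psi_f=\bar z-w$, and $1+|z|^2=2-\rho_+$. The exact identity
\[
\frac{2}{1+|\psi|^2}=\frac{2}{2-\rho_+}+\frac{2(2\Re(zw)-|w|^2)}{(2-\rho_+)(1+|\psi|^2)}
\]
followed by a second application of it yields
\[
\frac{2}{1+|\psi_f|^2}=\frac{2}{2-\rho_+}+\frac{4\Re(zw)}{(2-\rho_+)^2}+Q,
\]
where $|Q|\le C(|w|^2+|\Re(zw)|^2)\le C|w|^2$. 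Here the denominators $1+|\psi|^2\ge1$ and $2-\rho_+\ge1$ give uniform constants; no smallness of $w$ is even needed, only $|w|\le M$ to bound polynomial factors. Multiplying by $\frac{4}{\rho_+^2}$ gives three pieces:
\begin{itemize}
\item the term $\frac{4}{\rho_+^2}\bigl(\frac{2}{2-\rho_+}-1\bigr)=\frac{4}{\rho_+(2-\rho_+)}=\sigma_{\Id}^+$, matching \eqref{eq:sigma-id-interior};
\item the term $\frac{16\Re(z w)}{\rho_+^2(2-\rho_+)^2}=\frac{8\Re(z\Nc(f))}{\rho_+(2-\rho_+)^2}=L_f^+$;
\item the term $\frac{4}{\rho_+^2}Q$, bounded by $C\frac{|w|^2}{\rho_+^2}\le C|\Nc(f)|^2$.
\end{itemize}

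\textbf{Step 3.} Collect $R_f^+\coloneqq \frac{4}{\rho_+^2}Q - A(\psi_f)\frac{\rho_+^2}{4}|\Sc(f)|^2$. This satisfies the claimed pointwise bound with $C_0$ depending only on $M$.

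The main obstacle is bookkeeping. The $\rho_+^{-2}$ prefactor must be absorbed exactly by the two powers of $\rho_+$ inside $|w|^2$, so that the remainder stays quadratic in $\Nc(f)$ with no negative powers of $\rho_+$ left over. This is why the remainder must be written exactly, as above, rather than via a Taylor bound that requires $|w|$ small. If the exact-identity approach leaves stray cross terms, the fallback is to split $\D$ into the region where $|w|\le\frac12$ and its complement. On the first region a Taylor bound applies. The complement is contained in a compact disc, depending only on the rate in \eqref{eq:boundary-Schwarzian}, where $\rho_+$ is bounded below, and there every term is trivially bounded by $C|w|^2$.
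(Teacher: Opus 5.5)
Your proposal is correct and is essentially the paper's computation. Your first application of the identity is the paper's step $1-|\psi_f|^2=\rho_+\tilde\psi$, since $2\Re(zw)-|w|^2=\rho_+(\tilde\psi-1)$. Your second application is the paper's replacement of $1/(1+|\psi_f|^2)$ by $1/(2-\rho_+)$, and in both arguments the remainder is controlled by $1+|\psi_f|^2\geq 1$, $2-\rho_+\geq 1$ and the boundedness of $\rho_+|\Nc(f)|$. Your bound $|A(\psi_f)|\leq 1$ for the Schwarzian term is a slightly cleaner replacement for the paper's estimate via $\tilde\psi$, and the fallback splitting is unnecessary because the exact identity already closes.
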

\begin{proof} 
We have 
\begin{equation}\label{eq:psi-square}
	\begin{aligned} 
		|\psi_f|^2 &=\left(\overline z-\frac{\rho_+}{2}\, \Nc(f)\right) \left(z-\frac{\rho_+}{2}\, \overline{\Nc(f)} \right)\\[2mm]
		&=|z|^2-\frac{\rho_+}{2}\left(z\, \Nc(f)+\overline{z\, \Nc(f)}\right) +\frac{\rho_+^2}{4}\, |\Nc(f)|^2\\[2mm]
		&=|z|^2-\rho_+\, \Re(z\, \Nc(f))+\frac{\rho_+^2}{4}\, |\Nc(f)|^2.
	\end{aligned} 
\end{equation}
In particular, we obtain 
\begin{equation*}
	1-|\psi_f|^2 = \rho_+ + \rho_+\, \Re(z\, \Nc(f))-\frac{\rho_+^2}{4}\, |\Nc(f)|^2 =\rho_+\, \tilde{\psi}, 
\end{equation*}
where
\begin{equation}\label{eq:tilde-psi}
	\tilde{\psi}\coloneqq 1 + \Re(z\, \Nc(f)) - \frac{\rho_+}{4}\, |\Nc(f)|^2.
\end{equation}
We also have
\begin{equation*}
	1+|\psi_f|^2 = 2-\rho_+\,\tilde{\psi}.
\end{equation*}
From \eqref{eq:def-sigma}, we have
\begin{equation*}
	\sigma_f = \frac{4}{\rho_+} \, \frac{  \tilde{\psi} }{ 2- \rho_+\,\tilde{\psi}} - \frac{ \tilde{\psi}}{2-\rho_+\,\tilde{\psi}}\, \frac{\rho_+^3}{4}\, |\Sc(f)|^2.
\end{equation*}
From \eqref{eq:def-sigma}, we obtain 
\begin{equation*}
	\begin{aligned}
		\sigma_f - \sigma_{\Id} & = \frac{4}{\rho_+}\left(\frac{  \tilde{\psi} }{ 2- \rho_+\,\tilde{\psi}} - \frac{1}{2-\rho_+}  \right)  - \frac{ \tilde{\psi}}{2-\rho_+\,\tilde{\psi}}\, \frac{\rho_+^3}{4}\, |\Sc(f)|^2 \\[2mm]
		& = \frac{4}{\rho_+}\, \frac{ \tilde{\psi}\, (2-\rho_+) - 2+\rho_+\,\tilde{\psi} }{ (2-\rho_+\, \tilde{\psi})\, (2-\rho_+) } - \frac{ \tilde{\psi}}{2-\rho_+\,\tilde{\psi}}\, \frac{\rho_+^3}{4}\, |\Sc(f)|^2 \\[2mm]
		& = \frac{8}{\rho_+}\, \frac{ \tilde{\psi}-1 }{ (2-\rho_+\, \tilde{\psi})\, (2-\rho_+) } - \frac{ \tilde{\psi}}{2-\rho_+\,\tilde{\psi}}\, \frac{\rho_+^3}{4}\, |\Sc(f)|^2 .
	\end{aligned}
\end{equation*}
Using the formula \eqref{eq:tilde-psi}, we obtain 
\begin{equation*}
	\begin{aligned} 
	\sigma_f - \sigma_{\Id} & = \frac{8}{\rho_+}\, \frac{ \Re\left(z\, \Nc(f)\right) }{ (1+|\psi_f|^2)\, (2-\rho_+) }  -  \frac{2\, |\Nc(f)|^2 }{ (1+|\psi_f|^2)\, (2-\rho_+) } - \frac{ \tilde{\psi}}{1+|\psi_f|^2}\, \frac{\rho_+^3}{4}\, |\Sc(f)|^2 \\[2mm]
	 & = \frac{8\, \Re\left(z\, \Nc(f)\right) }{\rho_+ (2-\rho_+)^2 } + \frac{8}{\rho_+}\, \left[ \frac{ \Re\left(z\, \Nc(f)\right) }{ (1+|\psi_f|^2)\, (2-\rho_+) } -\frac{ \Re\left(z\, \Nc(f)\right) }{ (2-\rho_+)^2 } \right] -  \frac{2\, |\Nc(f)|^2 }{ (1+|\psi_f|^2)\, (2-\rho_+) } - \frac{ \tilde{\psi}}{1+|\psi_f|^2}\, \frac{\rho_+^3}{4}\, |\Sc(f)|^2 \\[2mm]
	 & = \frac{8\, \Re\left(z\, \Nc(f)\right) }{\rho_+ (2-\rho_+)^2 } + \frac{8}{\rho_+}\,  \frac{ ( 1-\rho_+ - |\psi_f|^2 )\, \Re\left(z\, \Nc(f)\right) }{ (1+|\psi_f|^2)\, (2-\rho_+)^2 }  -  \frac{2\, |\Nc(f)|^2 }{ (1+|\psi_f|^2)\, (2-\rho_+) } - \frac{ \tilde{\psi}}{1+|\psi_f|^2}\, \frac{\rho_+^3}{4}\, |\Sc(f)|^2 \\[2mm]
	 & = \frac{8\, \Re\left(z\, \Nc(f)\right) }{\rho_+ (2-\rho_+)^2 } + \frac{8\, (\tilde{\psi}-1)\, \Re\left(z\, \Nc(f)\right) }{ (1+|\psi_f|^2)\, (2-\rho_+)^2 }  -  \frac{2\, |\Nc(f)|^2 }{ (1+|\psi_f|^2)\, (2-\rho_+) } - \frac{ \tilde{\psi}}{1+|\psi_f|^2}\, \frac{\rho_+^3}{4}\, |\Sc(f)|^2
	\end{aligned} 
\end{equation*}
Moreover, we have $1+|\psi_f|^2\geq 1$ and $2-\rho_+\geq 1$. Using \eqref{eq:boundary-Schwarzian}, the last two terms on the right-hand side can be bounded as follows:
\begin{equation*}
	\frac{2\, |\Nc(f)|^2 }{ (1+|\psi_f|^2)\, (2-\rho_+) } + \frac{ |\tilde{\psi} | }{1+|\psi_f|^2}\, \frac{\rho_+^3}{4}\, |\Sc(f)|^2  \leq C\, \left( |\Nc(f)|^2 + \rho_+^2\, |\Sc(f)|^2 \right).  
\end{equation*}
Using \eqref{eq:tilde-psi} and \eqref{eq:boundary-Schwarzian}, we also obtain 
\begin{equation*}
	\left| \frac{8\, (\tilde{\psi}-1)\, \Re\left(z\, \Nc(f)\right) }{ (1+|\psi_f|^2)\, (2-\rho_+) } \right| \leq C\, |\Nc(f)|^2.\qedhere 
\end{equation*}
\end{proof} 

We can perform analogous computations with $g$ instead of $f$ and $\rho_- = |z|^2-1=-\rho_+$ instead of $\rho_+$ (which entails some changes of sign). We obtain the following result.
\begin{lemma}\label{prop:exterior-epstein-flux}
If $z\in \D^*$, we denote
\begin{equation}\label{eq:def-psig}
 \rho_-\coloneqq |z|^2-1,\qquad \text{ and }\qquad \psi_g \coloneqq \overline{z}+\frac{\rho_-}{2}\, \Nc(g).
\end{equation}
Then
\begin{equation}\label{eq:exterior-epstein-flux}
 (\Ep_{\Omega_-}\circ g)^*\bigl(\xi^{-2}\dd X\wedge\dd Y\bigr) = \sigma_g^-\, \dd x\wedge \dd y,
\end{equation}
with 
\begin{equation}
	\sigma_g^- \coloneqq \frac{1-|\psi_g|^2}{1+|\psi_g|^2} \left(\frac{4}{\rho_-^2}-\frac{\rho_-^2}{4}|\Sc(g)|^2\right).
\end{equation}
For the identity map, we set $\sigma_{\Id}^-\coloneqq -4/[\rho_-(2+\rho_-)]$. There exist functions $L_g^-\colon\D^*\to \R$ and $R_g^-\colon \D^*\to \R$ such that 
\begin{equation*}
	\sigma_g^- =\sigma_{\Id}^- + L_g^- + R_g^-,
\end{equation*}
where 
\begin{enumerate}
	\item the function $L_g^-$ is given by 
	\begin{equation*}
		L_g^- \coloneqq -\frac{ 8\, \Re\left(z\, \Nc(g)\right) }{ \rho_-\, (2+\rho_-)^2 },
	\end{equation*}
	
		\item the remainder $R_g^-$ satisfies the pointwise estimate
	\begin{equation*}
		|R_g^-| \leq C\, \left(|\Nc(g)|^2 + \rho_-^2\, |\Sc(g)|^2 \right).
	\end{equation*}
\end{enumerate}
\end{lemma}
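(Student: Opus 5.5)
The plan is to mirror the proofs of \Cref{prop:exact-epstein-flux} and \Cref{lm:decompo-sigmaf}, replacing $f$ by $g$ and $\rho_+$ by $\rho_-$. The only work is to track the signs that come from $1-|z|^2=-\rho_-$ on $\D^*$.

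\textbf{Step 1: coordinates of the exterior Epstein surface.} I would first write the exterior analogue of \eqref{eq:coordinates-Epstein}. The hyperbolic metric of $\Omega_-$ pulled back by $g$ is $4|\dd z|^2/\rho_-^2$, so $e^{\vp\circ g}=4/(\rho_-^2|g'|^2)$. Differentiating gives $\vp_{\bz}\circ g\cdot \overline{g'}=-2z/\rho_- - \overline{\Nc(g)}$. Substituting into the formulas for $\xi$, $Z$ and $\vec\eta$ of \Cref{sec:epstein}, I expect
\begin{equation*}
\xi=\frac{|g'|\,\rho_-}{1+|\psi_g|^2},\qquad \vec{\eta}_3=\frac{1-|\psi_g|^2}{1+|\psi_g|^2},
\end{equation*}
with $\psi_g$ as in \eqref{eq:def-psig}. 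The key identity to check is $\vp_{\bz}e^{-\vp/2}=-(\overline{g'}/|g'|)\,\overline{\psi_g}$, up to a unimodular factor. This sign bookkeeping is the main obstacle, since the relative sign in $\psi_g=\bz+\frac{\rho_-}{2}\Nc(g)$ differs from that in $\psi_f$.

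\textbf{Step 2: the flux identity.} The signed area form from Equations (3.4)–(3.5) of \cite{BBVPW2025} depends only on the hyperbolic metric $4/\rho_-^2$ and the Schwarzian. It therefore reads $\bigl(4/\rho_-^2-\rho_-^2|\Sc(g)|^2/4\bigr)\dd x\wedge\dd y$. The argument $E^*(\xi^{-2}\dd X\wedge \dd Y)=\vec\eta_3\,\dd a$ from the proof of \Cref{prop:exact-epstein-flux} then applies verbatim and gives \eqref{eq:exterior-epstein-flux}.

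\textbf{Step 3: the decomposition.} Expanding as in \eqref{eq:psi-square} gives
\begin{equation*}
|\psi_g|^2=|z|^2+\rho_-\Re(z\Nc(g))+\tfrac{\rho_-^2}{4}|\Nc(g)|^2.
\end{equation*}
Hence $1-|\psi_g|^2=-\rho_-\tilde\psi_g$ and $1+|\psi_g|^2=2+\rho_-\tilde\psi_g$, where
\begin{equation*}
\tilde\psi_g\coloneqq 1+\Re(z\Nc(g))+\tfrac{\rho_-}{4}|\Nc(g)|^2.
\end{equation*}
This yields
\begin{equation*}
\sigma_g^-=-\frac{4}{\rho_-}\frac{\tilde\psi_g}{2+\rho_-\tilde\psi_g}+\frac{\tilde\psi_g}{2+\rho_-\tilde\psi_g}\frac{\rho_-^3}{4}|\Sc(g)|^2.
\end{equation*}
Setting $\Nc=\Sc=0$ gives $\tilde\psi_g=1$ and recovers $\sigma_{\Id}^-=-4/[\rho_-(2+\rho_-)]$. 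Subtracting, the first difference is
\begin{equation*}
-\frac{8}{\rho_-}\,\frac{\tilde\psi_g-1}{(2+\rho_-\tilde\psi_g)(2+\rho_-)}.
\end{equation*}
Its leading part, obtained by replacing $2+\rho_-\tilde\psi_g$ with $2+\rho_-$ and $\tilde\psi_g-1$ with $\Re(z\Nc(g))$, is exactly $L_g^-$.

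\textbf{Step 4: the remainder.} The remainder consists of three kinds of terms. The first is the term $\frac{\rho_-}{4}|\Nc(g)|^2$ inside $\tilde\psi_g-1$; after division by $\rho_-$ it is $O(|\Nc(g)|^2)$. The second is the correction
\begin{equation*}
\frac{(\tilde\psi_g-1)\,\rho_-(1-\tilde\psi_g)}{\rho_-(\cdots)},
\end{equation*}
which is $O(|\Nc(g)|\,|\tilde\psi_g-1|)=O(|\Nc(g)|^2)$ once the $\frac{\rho_-}{4}|\Nc(g)|^2$ term is controlled. The third is the Schwarzian term, bounded by $C\rho_-^2|\Sc(g)|^2$ because $\rho_-|\tilde\psi_g|$ stays bounded relative to $1+|\psi_g|^2$.

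These bounds use $1+|\psi_g|^2\ge 1$ and \eqref{eq:boundary-Schwarzian} for $g$. The latter keeps $\rho_-|\Nc(g)|$ bounded, so $|\tilde\psi_g|\le C(1+|z|\,|\Nc(g)|)$. For large $|z|$, I would note that $|\psi_g|^2\sim|z|^2$ dominates, so the denominators $1+|\psi_g|^2$ and $2+\rho_-\sim|z|^2$ absorb the growth of $|z|$. This is the only place where the argument is not literally identical to the interior case, and I would handle it by comparing powers of $|z|$ as $|z|\to\infty$.
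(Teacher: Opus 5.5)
Your proposal is correct and follows the paper's own route: the paper proves this lemma simply by rerunning the proofs of Lemma~\ref{prop:exact-epstein-flux} and Lemma~\ref{lm:decompo-sigmaf} with $g$ and $\rho_-=-(1-|z|^2)$ in place of $f$ and $\rho_+$, and your identities $1-|\psi_g|^2=-\rho_-\tilde\psi_g$, $1+|\psi_g|^2=2+\rho_-\tilde\psi_g$ and your extraction of $L_g^-$ are the correct sign-adjusted versions of the interior ones. Your check as $|z|\to\infty$, absorbing powers of $|z|$ into $(2+\rho_-)^2=(1+|z|^2)^2$, covers a point the paper leaves implicit; the Schwarzian term is even easier than you suggest, since $\rho_-|\tilde\psi_g|/(1+|\psi_g|^2)=\bigl||\psi_g|^2-1\bigr|/(1+|\psi_g|^2)\le 1$.
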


By the definition of the Weil--Petersson topology, we have
\begin{equation}\label{eq:global-remainders-L1}
	\begin{cases} 
		R_f^+\in \Ll^1(\D),\qquad \text{ and }\qquad  R_g^-\in \Ll^1(\D^*),\\[2mm]
		\text{ the maps }\Gamma\mapsto (R_f^+,R_g^-)\in \Ll^1(\D)\times \Ll^1(\D^*) \text{ are continuous for the WP topology}.
	\end{cases} 
\end{equation}

\section{The cutoff volume near the conformal boundary}\label{sec:cutoff-volume-formula}

\subsection{The cutoff function}

We consider the cutoff function $\eta$ from \eqref{eq:p-epsilon}. For $x>0$, we denote
\begin{equation}\label{eq:defJ}
 J(x)=\int_0^x\eta(s)\,\frac{\dd s}{s^3} .
\end{equation}
Since $\eta=0$ on $[0,1]$ and $\eta=1$ on $[2,\infty)$, the function $\eta(s)s^{-3}$ is integrable on $(0,\infty)$. Let
\begin{equation}\label{eq:jinfty}
 j_{\infty} \coloneqq \int_0^\infty\eta(s)\, \frac{\dd s}{s^3} =\lim_{x\to\infty} J(x).
\end{equation}
We have
\begin{equation}\label{eq:cutoff-tail}
 j_{\infty}-J(x)=\int_x^\infty\eta(s)\, \frac{\dd s}{s^3}.
\end{equation}
In particular, because $\eta(s)=1$ for $s\geq2$, we have
\begin{equation}\label{eq:large-J}
 	\forall x\geq 2,\qquad j_{\infty}-J(x)=\frac{1}{2x^2}.
\end{equation}
We define
\begin{equation}\label{eq:cutoff-profile}
	\begin{cases} 
 		W(x) \coloneqq x^2\bigl(j_{\infty}-J(x)\bigr), \\[2mm]
 		w(y) \coloneqq W(e^y).
 	\end{cases} 
\end{equation}

\begin{lemma}\label{lem:cutoff-function-properties}
The function $w$ is nondecreasing. We have the following limits:
\begin{equation}\label{eq:limits-w}
	\begin{cases}
		w(y) \xrightarrow[y\to -\infty]{} 0, \\[2mm]
		\displaystyle w(y) \xrightarrow[y\to +\infty]{} \frac{1}{2}.
	\end{cases}
\end{equation}
Moreover, we have $w'\in \Ll^1\cap \Ll^{\infty}(\R)$. Its integral is given by 
\begin{equation}\label{eq:integral-w}
 \int_{\R} w'(y)\,\dd y=\frac{1}{2}.
\end{equation}
For every $a\in\R$, we have
\begin{equation}\label{eq:scale-shift}
 \int_{\R} \bigl(w(a-x)-w(-x)\bigr)\,\dd x=\frac{a}{2}.
\end{equation}
\end{lemma}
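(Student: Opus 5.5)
The plan is to compute $W$ and its derivative explicitly and read off every claim from that formula. Using \eqref{eq:cutoff-tail}, we have $W(x)=x^2\int_x^\infty \eta(s)s^{-3}\,\dd s$. Differentiating with respect to $x$ gives
\begin{equation*}
	W'(x)=2x\int_x^\infty\eta(s)\,\frac{\dd s}{s^3}-\frac{\eta(x)}{x}.
\end{equation*}
We rewrite this using $2x\int_x^\infty s^{-3}\,\dd s=x^{-1}$. This yields
\begin{equation*}
	W'(x)=2x\int_x^\infty \bigl(\eta(s)-\eta(x)\bigr)\,\frac{\dd s}{s^3}\geq 0,
\end{equation*}
where nonnegativity follows from the monotonicity $\eta'\ge0$. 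Hence $w'(y)=e^yW'(e^y)\geq0$, so $w$ is nondecreasing and $w'\geq 0$.

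Next come the limits. For $x\geq 2$, \eqref{eq:large-J} gives $W(x)=\tfrac12$ exactly, so $w(y)=\tfrac12$ for $y\geq\log 2$. For $x\leq1$, we have $W(x)=x^2 j_\infty\to0$ as $x\to0$, so $w(y)=j_\infty e^{2y}\to0$ as $y\to-\infty$. Together these give \eqref{eq:limits-w}. They also show that $w'=0$ on $[\log2,\infty)$ and $w'(y)=2j_\infty e^{2y}$ on $(-\infty,0]$. On the compact interval $[0,\log 2]$, $w'$ is bounded because $W$ is $\Cc^1$ there (if $\eta$ is only assumed measurable, $W$ is still Lipschitz on $[1,2]$ and $w'$ is bounded a.e.). Therefore $w'\in\Ll^1\cap\Ll^\infty(\R)$. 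Since $w'\ge0$ and $w$ is absolutely continuous, $\int_\R w'=\lim_{+\infty}w-\lim_{-\infty}w=\tfrac12$, which is \eqref{eq:integral-w}.

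For \eqref{eq:scale-shift}, write
\begin{equation*}
	w(a-x)-w(-x)=\int_0^a w'(t-x)\,\dd t.
\end{equation*}
Then apply Fubini--Tonelli; this is justified because $|w'|$ is integrable and $t$ ranges over a bounded interval. We obtain
\begin{equation*}
	\int_\R\bigl(w(a-x)-w(-x)\bigr)\,\dd x=\int_0^a\int_\R w'(t-x)\,\dd x\,\dd t=\frac a2.
\end{equation*}
For $a<0$, the same computation works with the signed integral $\int_0^a$. The only mildly delicate point is the sign of $W'$; the rest is bookkeeping, and I expect no real obstacle.
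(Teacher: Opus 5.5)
Your proposal is correct and follows essentially the same route as the paper: the same formula for $W'$ combined with the monotonicity of $\eta$, the explicit values $w(y)=j_\infty e^{2y}$ for $y\le 0$ and $w(y)=\tfrac12$ for $y\geq\log 2$, and Fubini applied to $w(a-x)-w(-x)=\int_0^a w'(t-x)\,\dd t$. The only difference is cosmetic: you get boundedness of $w'$ from compactness of $[0,\log 2]$, while the paper uses the identity $w'(y)=2W(e^y)-\eta(e^y)\in[0,1]$, which gives the explicit bound $w'\in[0,1]$ quoted later when the monotonicity lemma is applied to $\Phi_x$.
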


\begin{proof}
	By \eqref{eq:cutoff-tail}, the function $W$ in \eqref{eq:cutoff-profile} is given by
	\begin{equation*}
		W(x) =x^2\int_x^\infty\eta(s)\,\frac{\dd s}{s^3}.
	\end{equation*}
	Using $J'(x)=\eta(x)x^{-3}$, we obtain
	\begin{equation*}
 		W'(x) = 2x \int_x^{\infty} \eta(s)\, \frac{\dd s}{s^3} - \frac{\eta(x)}{x}.
	\end{equation*}
  Since $\eta$ is nondecreasing,  we have 
	\begin{equation*}
 		W'(x) \geq 2x \, \eta(x)\int_x^\infty \frac{\dd s}{s^3} - \frac{\eta(x)}{x} =0.
	\end{equation*}
	Hence we have $w'(y)=e^y\, W'(e^y)\geq0$, and $w$ is nondecreasing.  
	
	Since $0\leq\eta\leq1$, we obtain 
	\begin{equation*}
 		0\leq W(x) \leq x^2\int_x^\infty \frac{\dd s}{s^3} = \frac{1}{2}.
	\end{equation*}
	We write the derivative of $w$ as follows
	\begin{equation}\label{eq:cutoff-profile-derivative}
 		w'(y) = e^y\, W'(e^y) = 2\, W(e^y)-\eta(e^y).
	\end{equation}
Consequently, $0\leq w'\leq 1$, which proves that $w'\in \Ll^\infty(\R)$.

For $0<x\leq1$, we have $J(x)=0$ and therefore $W(x)=j_{\infty}\, x^2$. For $x\geq2$, we have $\eta(s)=1$ for $s\geq x$, and hence $W(x)=\frac{1}{2}$ by \eqref{eq:large-J} and \eqref{eq:cutoff-profile}. Thus, we obtain 
\begin{equation*}
 w(y)= \begin{cases}
 	j_{\infty}\, e^{2y} & \text{ if }y\leq 0,\\[2mm]
 	\displaystyle \frac{1}{2} & \text{ if }y\geq \log 2.
 \end{cases}
\end{equation*} 
This proves \eqref{eq:limits-w}. Since $w'\geq0$, we have
\begin{equation*}
 \int_\R|w'(y)|\,\dd y =\lim_{R\to\infty}\int_{-R}^R w'(y)\,\dd y  =\lim_{R\to\infty}\bigl(w(R)-w(-R)\bigr)=\frac12.
\end{equation*}
In particular, we obtain $w'\in \Ll^1(\R)$ and \eqref{eq:integral-w}.

It remains to prove \eqref{eq:scale-shift}.  Let $a\geq 0$. Using $w'\geq 0$ and \eqref{eq:integral-w}, we have
\begin{align*}
 \int_\R|w(a-x)-w(-x)|\,\dd x & = \int_{\R} \left| \int_{0}^a w'(s-x)\, \dd s\right| \, \dd x \\[2mm]
 &=\int_0^a\int_\R w'(s-x)\,\dd x\,\dd s\\[2mm]
 &=a\int_\R w'(y)\,\dd y=\frac{a}{2}.
\end{align*}
This is \eqref{eq:scale-shift}.  If $a<0$, then we repeat the same computation with 
\begin{equation*}
 	w(a-x)-w(-x)=-\int_a^0 w'(s-x)\,\dd s.
\end{equation*}
We obtain $-|a|/2=a/2$. This is again \eqref{eq:scale-shift}.
\end{proof}

\subsection{Cancellation of the round part for smooth curves}

Throughout this subsection, \emph{$\Gamma$ denotes an analytic Jordan curve}. In the proof of the main theorem, the results will be applied to the curves $\Gamma_j$ defined in \eqref{eq:approximation}. 

To simplify the notation, we write $E_+=\Ep_{\Omega_+}\circ f$ and $E_-=\Ep_{\Omega_-}\circ g$. We write $Z_+= X_+ + \ii Y_+$ and $Z_-=X_-+\ii Y_-$ for their horizontal components. We begin by proving the following lemma.

\begin{lemma}\label{lm:cancellation}
	We have 
	\begin{equation}\label{eq:projected-area-cancellation}
		\int_{\D}E_+^*(\dd X\wedge\dd Y) +\int_{\D^*}E_-^*(\dd X\wedge\dd Y)=0.
	\end{equation}
\end{lemma}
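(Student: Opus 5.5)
The identity \eqref{eq:projected-area-cancellation} says that the signed projected areas of the two Epstein surfaces cancel. The natural route is Stokes' theorem applied to the primitive $X\,\dd Y$ of $\dd X\wedge\dd Y$. The plan is to exhaust $\D$ by the disks $\D_r$ and $\D^*$ by $\{|z|>1/r\}$, $r\to1^-$, and to write
\begin{equation*}
\int_{\D_r}E_+^*(\dd X\wedge\dd Y)=\int_{\partial\D_r}X_+\,\dd Y_+, \qquad \int_{\{|z|>1/r\}}E_-^*(\dd X\wedge\dd Y)=-\int_{\{|z|=1/r\}}X_-\,\dd Y_- + (\text{contribution at }\infty).
\end{equation*}
Here the circle $\{|z|=1/r\}$ is oriented counterclockwise, which is why it carries a minus sign as the inner boundary of the exterior region. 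For the exterior piece, we first check the behaviour at $z=\infty$. Since $g$ has a simple pole at $\infty$ and $\Omega_-\ni\infty$, the formulae of Lemma~\ref{prop:exterior-epstein-flux} (the analogues of \eqref{eq:epstein-f-coordinates} for $g$) show that $E_-$ extends continuously (indeed analytically, after the inversion $z\mapsto1/z$) to $\infty$. We then verify that $E_-(\infty)$ is a finite point of $\Hd^3$ near which $Z_-$ is smooth. Consequently no boundary term at infinity arises, because a small loop around $\infty$ has image shrinking to a point and $\oint X_-\,\dd Y_-\to0$.

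Next we show that the integrals converge absolutely, so that the limits $r\to1$ agree with the integrals in \eqref{eq:projected-area-cancellation}. Since $\Gamma$ is analytic, $f$ and $g$ extend conformally across $\s^1$. Therefore $\Nc(f)$, $\Sc(f)$, $\Nc(g)$ and $\Sc(g)$ are bounded up to the boundary, and by \eqref{eq:epstein-f-coordinates} the maps $E_\pm$ extend smoothly to the closed domains, with $\xi=0$ and $Z_\pm=f$ (respectively $g$) on $\s^1$. The pullbacks $E_\pm^*(\dd X\wedge\dd Y)$ are therefore smooth bounded forms on $\overline\D$ and $\overline{\D^*}\cup\{\infty\}$. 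This is a key point: only the weighted form $\xi^{-2}\dd X\wedge\dd Y$ fails to be integrable, while $\dd X\wedge\dd Y$ itself is fine. Dominated convergence then justifies passing $r\to1$.

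Finally, as $r\to1$ the boundary terms become $\int_{\s^1}\Re(f)\,\dd\,\Im(f)$ and $-\int_{\s^1}\Re(g)\,\dd\,\Im(g)$, since $E_\pm$ are continuous up to $\s^1$ with $Z_+|_{\s^1}=f|_{\s^1}$ and $Z_-|_{\s^1}=g|_{\s^1}$. Both loops $f|_{\s^1}$ and $g|_{\s^1}$ parametrise $\Gamma$ with the same positive orientation, so each of these integrals equals the enclosed area of $\Omega_+$ and the two terms cancel.

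The main obstacle is the behaviour at $\infty$, namely confirming that $E_-$ has no boundary contribution there. For this we compute $E_-$ in the coordinate $w=1/z$. Writing $g(z)=bz+O(1)$, one has $\Nc(g)=O(z^{-2})$. Hence $\psi_g=\bar z+O(|z|^{-1})$, and so $\xi\to |b|$ (up to constants) while $Z_-\to$ a finite limit with a smooth expansion in $w$. If a subtlety with orientation reversal appears instead (for instance, the surface passing through $\infty$ of $\C$), the fallback is to apply a Möbius normalisation or to argue by the same cancellation with $\Phi_\Gamma$ from \cite[Remark 4.3]{BBVPW2025}: the image $\Phi_\Gamma(\overline{\Hd^3})$ is a closed cycle, so its projected signed area vanishes.
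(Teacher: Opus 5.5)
Your proposal is correct and takes essentially the paper's route: Stokes' theorem for a primitive of $\dd X\wedge\dd Y$ on exhausting domains. The two boundary terms on $\s^1$ cancel because $f|_{\s^1}$ and $g|_{\s^1}$ both traverse $\Gamma$ positively, and there is no contribution at $\infty$ because $Z_-\to b_0$ with $\partial_\theta Z_-=O(|z|^{-1})$.

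One small slip: to obtain $\psi_g=\bar z+O(|z|^{-1})$ you need $\Nc(g)=O(z^{-3})$, which the Laurent expansion $g=\lambda z+b_0+b_1z^{-1}+\cdots$ does give; the bound $O(z^{-2})$ you state would only yield $\psi_g=\bar z+O(1)$.
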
 
\begin{proof} 
Since $\Gamma$ is analytic, we can extend $f$ and $g$ across $\s^1$ by Schwarz reflection to a neighbourhood of $\s^1$. In particular, $\Nc(f)$, $\Nc(g)$, and their tangential derivatives are bounded near $\s^1$. From \eqref{eq:epstein-f-coordinates} and \eqref{eq:flux-notation}, we deduce the following asymptotic expansion in the $\Cc^1$ topology with respect to $\theta$:
\begin{equation*}
 \psi_f(re^{\ii\theta}) \underset{r\to 1}{=} re^{-\ii\theta}+O_{\Cc^1_\theta}(\rho ), \qquad \text{ and }\qquad \frac{f'(re^{\ii\theta})\, (1-r^2)\,\overline{\psi_f(re^{\ii\theta})}}{1+|\psi_f(re^{\ii\theta})|^2} \underset{r\to 1}{=} O_{\Cc^1_\theta}(\rho).
\end{equation*}
We deduce the following expansion for $Z_+$:
\begin{equation}\label{eq:interior-horizontal-boundary}
 Z_+(re^{\ii\theta})  =f(re^{\ii\theta})+O_{\Cc^1_\theta}(1-r^2).
\end{equation}
For the map $g$, we obtain similar estimates by using the inversion $s\coloneqq \frac{1}{r}$. Since $\rho_-=\frac{1}{r^2}-1$, we obtain 
\begin{equation*}
 \psi_g(s e^{\ii \theta})\ust{s\to 1}{=} s e^{-\ii \theta}+O_{\Cc^1_\theta}(\rho_- ),
 \qquad \text{ and }\qquad 
 \frac{g'(s e^{\ii \theta}) \rho_-(s)\, \overline{\psi_g(s e^{\ii \theta})}}{1+|\psi_g(s e^{\ii \theta})|^2} =O_{\Cc^1_\theta}(\rho_-).
\end{equation*}
Therefore,
\begin{equation}\label{eq:exterior-horizontal-boundary}
 Z_-(se^{\ii\theta}) \ust{s\to 1}{=} g(se^{\ii\theta})+O_{\Cc^1_\theta}(s^2-1) .
 \end{equation}
Since
conformal maps preserve cyclic order, both \(f(e^{i\theta})\) and
\(g(e^{i\theta})\), with increasing \(\theta\), traverse \(\Gamma\)
counterclockwise.

We now consider the asymptotic expansion of $Z_-$ at $\infty$. We consider the Laurent expansion of $g$, with $\lambda\coloneqq g'(\infty)>0$,
\begin{equation*}
 g(z) \ust{|z|\to \infty}{=} \lambda z+b_0+O_{\Cc^2}\left( \frac{1}{z} \right).
\end{equation*}
We obtain the following uniform expansions in $\theta$:
\begin{equation*}
	\begin{cases}
		g'(s\, e^{\ii\theta}) \ust{s\to \infty}{=} \lambda+O_{\Cc^1_{\theta}}(s^{-2}), \\[2mm]
		\Nc(g)(s e^{\ii \theta}) \ust{s\to \infty}{=} O_{\Cc^1_{\theta}}(s^{-3}).
	\end{cases} 
\end{equation*}
It follows from \eqref{eq:def-psig} that
\begin{equation*}
	\psi_g(s e^{\ii\theta})  \ust{s\to \infty}{=} se^{-\ii\theta}+O_{\Cc^1_\theta}(s^{-1}).
\end{equation*}
Hence,
\begin{equation*}
	1+|\psi_g(s e^{\ii\theta})|^2  \ust{s\to \infty}{=} s^2+O_{\Cc^1_\theta}(1).
\end{equation*}
Thus, we obtain 
\begin{equation*}
	\frac{s^2-1}{1+|\psi_g(s e^{\ii\theta})|^2}  \ust{s\to \infty}{=} 1+O_{\Cc^1_\theta}(s^{-2}).
\end{equation*}
Therefore, formula \eqref{eq:coordinates-Epstein} yields, uniformly in $\theta$,
\begin{equation}\label{eq:exterior-horizontal-infinity}
	\begin{cases} 
 		Z_-(s e^{\ii\theta}) \ust{s\to \infty}{=} b_0+O(s^{-1}), \\[2mm]
	 	\partial_\theta Z_-(se^{\ii\theta}) \ust{s\to \infty}{=} O(s^{-1}).
	 \end{cases} 
\end{equation}

We now integrate by parts, using that 
\begin{equation*}
	\dd X\wedge \dd Y = \dd\left( \frac{X\, \dd Y - Y\, \dd X}{2} \right).
\end{equation*}
If $\gamma\colon [0,2\pi]\to\C$ is a closed $\Cc^1$ curve, we define
\begin{equation*}
 I(\gamma) \coloneqq \frac{1}{2} \int_0^{2\pi} \Im\left(\overline{\gamma(\theta)}\, \gamma'(\theta)\right)\,\dd\theta.
\end{equation*}
If $\D_r$ denotes the disk of radius $r$ centred at the origin, then
\begin{equation*}
	\int_{\D_r} E_+^*(\dd X\wedge \dd Y)= I (\theta\mapsto Z_+(r e^{\ii \theta })).
\end{equation*}
Similarly, for $E_-$,
\begin{equation*}
 	\int_{\D_R\setminus \D_r} E_-^*\left(\dd X\wedge \dd Y\right) = I (\theta\mapsto Z_-(R e^{\ii \theta}))-I(\theta\mapsto Z_-(r e^{\ii \theta})).
\end{equation*}
By \eqref{eq:exterior-horizontal-infinity}, \eqref{eq:interior-horizontal-boundary} and \eqref{eq:exterior-horizontal-boundary}, we have 
\begin{align*}
 \lim_{r\to 1^-} I (\theta\mapsto Z_+(r e^{\ii \theta })) &=\frac12\int_\Gamma(X\dd Y-Y\dd X),\\[2mm]
 \lim_{r\to 1^+} I(\theta\mapsto Z_-(r e^{\ii \theta})) &=\frac{1}{2}\int_\Gamma(X\dd Y-Y\dd X), \\[2mm]
 \lim_{R\to \infty} I (\theta\mapsto Z_-(R e^{\ii \theta})) & = 0.
\end{align*}
We conclude by adding the three limits.
\end{proof} 

Lemma \ref{lm:cancellation} provides the following formula for the cutoff volume between the Epstein surfaces.

\begin{lemma}\label{prop:exact-weighted-volume}
For every analytic Jordan curve $\Gamma\subset\Chat$ and any $\eps>0$, we have 
\begin{equation}\label{eq:exact-weighted-volume-flux}
 V_\eps(\Gamma) =\int_{\D} W(\xi_f/\eps)\, \sigma_f\,\dd x\,\dd y +\int_{\D^*}W(\xi_g/\eps)\, \sigma_g^-\,\dd x\, \dd y,
\end{equation}
where $\sigma_f$ and $\sigma_g^-$ are the signed flux densities in \eqref{eq:exact-epstein-flux} and \eqref{eq:exterior-epstein-flux}.
\end{lemma}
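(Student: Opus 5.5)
Starting from \eqref{eq:volume-boundary-primitive}, it suffices to compute $\int_{\Omega_\sigma}\Ep_{\Omega_\sigma}^*\alpha_\eps$ for $\sigma=\pm$. I will pull back by $f$ and $g$ and write the primitive in the form
\begin{equation*}
	p_\eps(\xi)=\int_0^{\xi}\eta(s/\eps)\,\frac{\dd s}{s^3} = \eps^{-2}J(\xi/\eps) = \eps^{-2}j_\infty - \xi^{-2}\, W(\xi/\eps),
\end{equation*}
where the last equality follows from \eqref{eq:cutoff-profile}, since $\eps^{-2}(j_\infty-J(\xi/\eps)) = \xi^{-2}(\xi/\eps)^2(j_\infty-J(\xi/\eps))$. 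Hence
\begin{equation*}
	E_\pm^*\alpha_\eps=\eps^{-2}j_\infty\, E_\pm^*(\dd X\wedge\dd Y)-W(\xi_\pm/\eps)\, E_\pm^*(\xi^{-2}\dd X\wedge\dd Y).
\end{equation*}
By Lemma \ref{prop:exact-epstein-flux} and Lemma \ref{prop:exterior-epstein-flux}, the second term equals $W(\xi_f/\eps)\sigma_f\,\dd x\wedge\dd y$, respectively $W(\xi_g/\eps)\sigma_g^-\,\dd x\wedge\dd y$. By Lemma \ref{lm:cancellation}, the first terms cancel after summing over $\sigma\in\{+,-\}$. The minus sign in \eqref{eq:volume-boundary-primitive} then turns $-W\sigma$ into $+W\sigma$, which gives \eqref{eq:exact-weighted-volume-flux}.

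The step requiring care is justifying that each integral is separately well defined, so that splitting the sum is legitimate. I will check the following points in order.

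First, $\alpha_\eps$ vanishes where $\xi<\eps$, so only the region where $\xi_\pm\geq\eps$ contributes. For an analytic curve, \eqref{eq:epstein-f-coordinates} shows $\xi_f\approx|f'|\rho_+/2\to0$ at $\s^1$, so the integrands vanish in a neighbourhood of $\s^1$.

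Second, the decomposition above is applied to functions that are not individually compactly supported. The term $\eps^{-2}j_\infty E_\pm^*(\dd X\wedge\dd Y)$ is integrable: on the bounded domain this follows from the $\Cc^1$ expansions in the proof of Lemma \ref{lm:cancellation}, and near $\infty$ in $\D^*$ from \eqref{eq:exterior-horizontal-infinity}. It follows that $W(\xi/\eps)\sigma$ is integrable as well, being the difference of two integrable terms.

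Third, the sum of the two $j_\infty$ terms is exactly zero by \eqref{eq:projected-area-cancellation}.

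Near $\infty$ in $\D^*$, $\xi_g\to\lambda$ by the Laurent expansion. For $\eps<\lambda/2$ this gives $W=1/2$ there, but integrability of $\sigma_g^-$ near $\infty$ still has to be checked. From $\rho_-\sim s^2$ and $\psi_g\sim \overline z$, one gets $\sigma_g^-\sim -4/s^4$, which is integrable in $\dd x\,\dd y$. For larger $\eps$, the same bound applies because $0\le W\le 1/2$.

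One further point, which I expect to be the main subtlety, is whether the identity $E^*(\xi^{-2}\dd X\wedge\dd Y)=\sigma\,\dd x\wedge\dd y$ holds at points where the Epstein surface is singular. I would argue that it is a pointwise algebraic identity between real-analytic functions, valid off a null set and hence everywhere by continuity. Apart from this, the argument is a direct bookkeeping computation.
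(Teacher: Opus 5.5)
Your proposal is correct and is essentially the paper's proof. The paper also writes $p_\eps(\xi)=\eps^{-2}j_\infty-\xi^{-2}W(\xi/\eps)$, kills the $j_\infty$ part with Lemma \ref{lm:cancellation}, and identifies the rest through Lemmas \ref{prop:exact-epstein-flux} and \ref{prop:exterior-epstein-flux}; the one difference is that it justifies splitting the sum by working on truncations $|z|<r$, $1/r<|w|<R$ (letting $R\to\infty$ before $r\uparrow1$), whereas you check absolute integrability of each piece directly. Two small corrections that do not affect the argument: the bounded density of $E_+^*(\dd X\wedge\dd Y)$ up to $\s^1$ comes from the real-analytic extension of $E_+$ across $\s^1$, since the $\Cc^1_\theta$ expansions alone do not control $\partial_r$; and near $\infty$ the Schwarzian term $\frac{\rho_-^2}{4}|\Sc(g)|^2$ is also of order $|z|^{-4}$, so $\sigma_g^-=O(|z|^{-4})$ rather than $\sim-4|z|^{-4}$, which is all you need.
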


\begin{proof}
All the integral computations below are first performed on the truncations \(|z|<r\) and \(1/r<|w|<R\). We then let
\(R\to\infty\) and only afterwards \(r\uparrow1\), exactly as in the proof of \eqref{eq:projected-area-cancellation}.  

The function $p_{\eps}$ defined in \eqref{eq:p-epsilon} and the function $J$ defined in \eqref{eq:defJ} satisfy
\begin{equation*}
 p_\eps(\xi) = \int_0^{\xi} \eta\left(\frac{s}{\eps}\right)\, \frac{ds}{s^3} = \int_0^{\frac{\xi}{\eps}} \eta(\sigma)\, \frac{d\sigma}{\eps^2\, \sigma^3} =\eps^{-2}J(\xi/\eps).
\end{equation*}
Combining \eqref{eq:p-epsilon} and \eqref{eq:volume-boundary-primitive}, we obtain
\begin{equation*}
	\begin{aligned}
		- \eps^2\, V_{\eps}(\Gamma) & = \int_{\Omega_+} \Ep_{\Omega_+}^*\left(\eps^2 \alpha_{\eps}\right) + \int_{\Omega_-} \Ep_{\Omega_-}^*\left(\eps^2 \alpha_{\eps}\right) \\[2mm]
		& = \int_{\Omega_+} \Ep_{\Omega_+}^*\left(J(\xi/\eps)\, \dd X\wedge \dd Y\right) + \int_{\Omega_-} \Ep_{\Omega_-}^*\left(J(\xi/\eps)\, \dd X\wedge \dd Y\right).
	\end{aligned}
\end{equation*}
Adding $j_{\infty}$ times \eqref{eq:projected-area-cancellation}, with $j_{\infty}$ defined in \eqref{eq:jinfty}, yields
\begin{equation*}
	\eps^2\, V_{\eps}(\Gamma) = \int_{\Omega_+} \Ep_{\Omega_+}^*\left( \left[ j_{\infty} -J(\xi/\eps)\right]\, \dd X\wedge \dd Y\right) + \int_{\Omega_-} \Ep_{\Omega_-}^*\left( \left[ j_{\infty} -J(\xi/\eps)\right] \, \dd X\wedge \dd Y\right).
\end{equation*}
We obtain the result using \eqref{eq:exact-epstein-flux}, \eqref{eq:exterior-epstein-flux} and \eqref{eq:cutoff-profile}.
\end{proof}

If $\Gamma$ is the round circle, the two integrals in \eqref{eq:exact-weighted-volume-flux} cancel exactly since the two Epstein surfaces coincide. This implies that
\begin{equation}\label{eq:round-integral-cancellation}
 \int_1^\infty W(h_-(R)/\eps)\, \sigma_{\Id}^-(R)\, R\,\dd R =-\int_0^1 W(h_+(r)/\eps)\, \sigma_{\Id}^+(r)\, r\,\dd r,
\end{equation}
where $h_-$ and $h_+$ describe the heights of the Epstein surfaces given in \eqref{eq:Epstein-round}:
\begin{equation}\label{eq:round-height-h}
	h_+(r) \coloneqq \frac{1-r^2}{1+r^2}, \qquad \text{ and }\qquad h_-(R) \coloneqq  \frac{R^2-1}{R^2+1}.
\end{equation}

\subsection{Boundary term}

Using the functions in \eqref{eq:round-height-h}, we now further decompose the terms $L_f^+$, $L_g^-$, $\xi_f$, and $\xi_g$.

\begin{lemma}\label{lm:decompo-Lf}
	Let $(\Gamma_j)_{j\in\N}$ be a sequence whose range is contained in a compact subset of the Weil--Petersson space.
		Let $u_{f_j}(r e^{\ii \theta}) \coloneqq \log |f_j'(re^{\ii \theta})|$ and consider the change of variable $t\coloneqq -\log h_+(r)$. Then
	\begin{equation}\label{eq:linear-flux-DtN}
			L_{f_j}^+(re^{\ii\theta})\, r\,\dd r  = \frac{2r}{1+r^2}\,\partial_ru_{f_j}(re^{\ii\theta})\,\dd t.
	\end{equation}
	Moreover, there exists a function $v_{f_j}$ such that 
	\begin{equation}\label{eq:height-factorization}
		\xi_{f_j}=  h_+(r)\, e^{u_{f_j}}\,e^{v_{f_j}},
	\end{equation}
		and the following estimate holds with a constant $C>0$ independent of $j$:
	\begin{equation}\label{eq:rho-f-bound}
		|v_{f_j}| \leq C\, \rho_+\, |\Nc(f_j)|.
	\end{equation}
\end{lemma}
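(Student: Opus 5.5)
The statement splits into two independent computations: a change of variables for the linear term $L_f^+$, and a factorisation of the height $\xi_f$ from \eqref{eq:epstein-f-coordinates}. I drop the index $j$ and write $f$.

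\textbf{The identity \eqref{eq:linear-flux-DtN}.} Since $u_f=\log|f'| = \Re\log f'$ is harmonic, we have
\begin{equation*}
	\partial_r u_f(re^{\ii\theta}) = \Re\bigl(e^{\ii\theta}\,\Nc(f)(re^{\ii\theta})\bigr),
	\qquad\text{so}\qquad
	\Re\bigl(z\,\Nc(f)\bigr)=r\,\partial_r u_f .
\end{equation*}
Hence, with $\rho_+=1-r^2$ and $2-\rho_+=1+r^2$,
\begin{equation*}
	L_f^+\,r\,\dd r=\frac{8r^2\,\partial_r u_f}{(1-r^2)(1+r^2)^2}\,\dd r .
\end{equation*}
For $t=-\log h_+(r)=\log(1+r^2)-\log(1-r^2)$ we compute
\begin{equation*}
	\frac{\dd t}{\dd r}=\frac{2r}{1+r^2}+\frac{2r}{1-r^2}=\frac{4r}{(1-r^2)(1+r^2)} .
\end{equation*}
Therefore
\begin{equation*}
	\frac{8r^2}{(1-r^2)(1+r^2)^2}\,\dd r=\frac{2r}{1+r^2}\,\dd t ,
\end{equation*}
which is exactly \eqref{eq:linear-flux-DtN}. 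This part is a pure identity and needs no estimates.

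\textbf{The factorisation \eqref{eq:height-factorization}.} From \eqref{eq:epstein-f-coordinates},
\begin{equation*}
	\xi_f=\frac{e^{u_f}\rho_+}{1+|\psi_f|^2},
	\qquad
	h_+=\frac{\rho_+}{2-\rho_+}=\frac{\rho_+}{1+|z|^2}.
\end{equation*}
This forces the definition
\begin{equation*}
	v_f\coloneqq\log\frac{1+|z|^2}{1+|\psi_f|^2}.
\end{equation*}
The expansion \eqref{eq:psi-square} gives
\begin{equation*}
	1+|\psi_f|^2=(1+|z|^2)(1+\delta),
	\qquad
	\delta\coloneqq\frac{-\rho_+\Re(z\Nc(f))+\frac{\rho_+^2}{4}|\Nc(f)|^2}{1+|z|^2},
\end{equation*}
so that $v_f=-\log(1+\delta)$. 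Since $|z|\leq1$, we get
\begin{equation*}
	|\delta|\leq \rho_+|\Nc(f)|\Bigl(1+\tfrac14\rho_+|\Nc(f)|\Bigr).
\end{equation*}

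\textbf{Main obstacle: the bound \eqref{eq:rho-f-bound} uniformly in $j$.} We need $\rho_+|\Nc(f_j)|$ bounded uniformly in $j$ and $z$, and we need $\delta$ to stay away from $-1$.

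\emph{Uniform boundedness.} Uniformly in $j$ and $z$, I would use the classical bound $(1-|z|^2)|\Nc(f)|\leq 6$ for univalent $f$ (Koebe). Alternatively, I would use \eqref{eq:boundary-Schwarzian} together with compactness of the range in the WP space, which gives uniform decay near $\s^1$ and uniform bounds on compact subdisks. This is the same uniformity asserted for $C_0$ in Lemma \ref{lm:decompo-sigmaf}. In particular $|\delta|\leq C$.

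\emph{Staying away from $-1$.} Since $1+|\psi_f|^2\geq1$ and $1+|z|^2\leq2$, we have $1+\delta\geq\frac12$. Hence $|\log(1+\delta)|\leq C|\delta|$ on the range of $\delta$, which lies in $[-\tfrac12,C]$.

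\emph{Conclusion.} Combining these,
\begin{equation*}
	|v_{f_j}|\leq C\,\rho_+|\Nc(f_j)|\bigl(1+\rho_+|\Nc(f_j)|\bigr)\leq C'\rho_+|\Nc(f_j)|,
\end{equation*}
with $C'$ independent of $j$.
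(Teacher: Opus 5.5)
Your proof is correct. The identity \eqref{eq:linear-flux-DtN} is the same computation as in the paper, and so is the definition $v_f=\log\frac{1+|z|^2}{1+|\psi_f|^2}$. The two arguments differ only in how the uniform bound \eqref{eq:rho-f-bound} is obtained.

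\textbf{The paper's route.} It estimates $x=e^{v_f}-1=\frac{1+r^2}{1+|\psi_f|^2}-1$, and a priori $x$ can approach $-1$. So it invokes \eqref{eq:boundary-Schwarzian} and the WP-compactness of the family to make $\rho_+|\Nc(f_j)|$ uniformly small near $\s^1$, which keeps $\log(1+x)$ Lipschitz there. It then treats the interior separately, using local uniform bounds on $\psi_{f_j}$.

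\textbf{Your route.} You expand the reciprocal ratio $1+\delta=\frac{1+|\psi_f|^2}{1+|z|^2}$, which is automatically $>\frac12$. Hence $|\log(1+\delta)|\leq 2|\delta|$ on all of $\D$, with no smallness required. You control the quadratic term with the classical distortion bound $(1-|z|^2)|\Nc(f)|\leq 6$, valid for every univalent $f$ on $\D$.

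\textbf{What each buys.} Your argument avoids splitting $\D$ into a boundary annulus and a compact core, and yields an explicit universal constant. Concretely, $|v_f|\leq 2|\delta|\leq 2\rho_+|\Nc(f)|\bigl(1+\tfrac{6}{4}\bigr)=5\rho_+|\Nc(f)|$. It does not use the compactness hypothesis at all. The paper's argument relies only on the WP-compactness already assumed in the statement and on the boundary behaviour \eqref{eq:boundary-Schwarzian} recorded in its Section 2. It needs no input from univalent function theory beyond that.
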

\begin{proof} 
The radial derivative of $u_{f_j}$ is given by
\begin{equation}\label{eq:normal-derivative-log-f}
	\begin{aligned} 
 	\partial_ru_{f_j}(re^{i\theta}) & = \frac{1}{2}\, \frac{\partial_r |f_j'(re^{i\theta})|^2}{|f_j'(re^{i\theta})|^2} \\[2mm]
 	& = \frac{\Re\left( \overline{f_j'(r e^{\ii \theta})}\, \partial_r [f_j'(r e^{\ii \theta})]\right)}{|f_j'(r e^{\ii \theta})|^2} \\[2mm]
 	& = \Re\left(\frac{\partial_r [f_j'(r e^{\ii \theta})]}{f_j'(r e^{\ii \theta})}\right) \\[2mm]
 	 & =\Re\left(e^{\ii\theta}\, \Nc(f_j)(re^{\ii\theta})\right).
 	\end{aligned} 
\end{equation}
 We have
\begin{equation}\label{eq:dt-sigma}
	\begin{aligned}
		\dd t & \coloneqq -\dd\log h_+ \\[2mm]
		& = -\left( \frac{-2r}{1-r^2} - \frac{(1-r^2)2r(1+r^2)^{-2} }{  \frac{1-r^2}{1+r^2} } \right) \dd r \\[2mm]
		& = \left( \frac{2r}{1-r^2} + \frac{2r }{  1+r^2 } \right) \dd r \\[2mm]
		 & = \sigma_{\Id}^+(re^{\ii\theta})\, r\,\dd r.
	\end{aligned}
\end{equation}
Combining \eqref{eq:normal-derivative-log-f} with \eqref{eq:round-height-h}, we obtain the following expression for $L_{f_j}^+$ in Lemma \ref{lm:decompo-sigmaf}:
\begin{equation*}
	\begin{aligned} 
 		L_{f_j}^+(re^{\ii\theta})\, r\,\dd r & = \frac{8\, \Re\left(r\, e^{\ii\theta}\, \Nc(f_j)(r e^{\ii \theta})\right)}{(1-r^2)(1+r^2)^2 }\, r\, \dd r \\[2mm]
 		& = \frac{8\, r^2}{(1-r^2)(1+r^2)^2}\, \dr_r u_{f_j}(r e^{\ii\theta})\, \dd r\\[2mm]
 		& = \frac{2r^2}{1+r^2}\, \sigma_{\Id}^+\, \dr_r u_{f_j}(r e^{\ii\theta})\, \dd r\\[2mm]
 		& =\frac{2r}{1+r^2}\,\partial_ru_{f_j}(re^{\ii\theta})\,\dd t.
 	\end{aligned} 
\end{equation*}
This proves \eqref{eq:linear-flux-DtN}. We now decompose the height $\xi_{f_j}$ given by \eqref{eq:epstein-f-coordinates}. With $v_{f_j}\coloneqq \log\frac{1+r^2}{1+|\psi_{f_j}|^2}$, we have
\begin{equation}
 \xi_{f_j}= \frac{e^{u_{f_j}}\, (1-r^2)}{1+|\psi_{f_j}|^2} =  h_+(r)\, e^{u_{f_j}}\,e^{v_{f_j}}. 
\end{equation}
This proves \eqref{eq:height-factorization}. We now estimate $v_{f_j}$. From $1+|\psi_{f_j}|^2\geq 1$ and \eqref{eq:psi-square}, we obtain
\begin{equation*}
	\left|\frac{1+r^2}{1+|\psi_{f_j}|^2}-1\right| =\left| \frac{r^2-|\psi_{f_j}|^2}{1+|\psi_{f_j}|^2} \right| \leq C\, \left( 1+\rho_+\, |\Nc(f_j)|\right) \rho_+\, |\Nc(f_j)|.
\end{equation*}
Using \eqref{eq:boundary-Schwarzian} and the convergence of $\Gamma_j$ in the WP topology, we obtain that $\rho_+|\Nc(f_j)|$ is uniformly bounded in $\Ll^{\infty}(\D)$ and is uniformly small near the boundary.
Using the estimate $|\log(1+x)|\leq c_{\delta}|x|$ for $|x|\leq 1-\delta$ for any $\delta\in(0,1)$, we obtain near the boundary
\begin{equation}\label{eq:bound-v-boundary}
	|v_{f_j}| \leq C\, \rho_+\, |\Nc(f_j)|.
\end{equation}
In the interior, $(\psi_j)_{j\in\N}$ is uniformly bounded on every compact subset of $\D$. Hence the estimate \eqref{eq:bound-v-boundary} holds on the whole disk $\D$. This proves \eqref{eq:rho-f-bound}.
\end{proof} 

For the map $g$, we have the following analogous expansion. 
\begin{lemma}\label{lm:decompo-Lg}
	Let $(\Gamma_j)_{j\in\N}$ be a sequence whose range is contained in a compact subset of the Weil--Petersson space.
		Let $u_{g_j}(s e^{\ii \theta}) \coloneqq \log |g_j'(se^{\ii \theta})|$ and consider the change of variable $\tau \coloneqq \log h_-(s)$. Then
	\begin{equation}\label{eq:linear-flux-DtN-g}
		L_{g_j}^-(se^{\ii\theta})\, s\,\dd s  = -\frac{2s}{1+s^2}\,\partial_s u_{g_j}(se^{\ii\theta})\,\dd \tau.
	\end{equation}
	Moreover, there exists a function $v_{g_j}$ such that 
	\begin{equation} \label{eq:exterior-height-factorization}
		\xi_{g_j}=  h_-(s)\, e^{u_{g_j}}\,e^{v_{g_j}},
	\end{equation}
		and the following estimate holds for some constant $C>0$ independent of $j$:
	\begin{equation}\label{eq:rho-g-bound}
		|v_{g_j}| \leq C\, \rho_-\, |\Nc(g_j)|.
	\end{equation}
\end{lemma}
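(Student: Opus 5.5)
The proof mirrors that of Lemma \ref{lm:decompo-Lf}, with $r\in(0,1)$ replaced by $s\in(1,\infty)$, $\rho_+$ by $\rho_-$, and the sign conventions of Lemma \ref{prop:exterior-epstein-flux}. There are three steps: the radial derivative of $u_{g_j}$, the change of variables $\tau=\log h_-(s)$, and the factorisation of the height.

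\textbf{Radial derivative.} Exactly as in \eqref{eq:normal-derivative-log-f}, since $g_j'$ is holomorphic and nonvanishing on $\D^*$,
\begin{equation*}
\partial_s u_{g_j}(se^{\ii\theta})=\Re\bigl(e^{\ii\theta}\,\Nc(g_j)(se^{\ii\theta})\bigr).
\end{equation*}

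\textbf{Change of variables.} Differentiating $\tau=\log(s^2-1)-\log(s^2+1)$ gives
\begin{equation*}
\dd\tau=\Bigl(\frac{2s}{s^2-1}-\frac{2s}{s^2+1}\Bigr)\dd s=\frac{4s}{(s^2-1)(s^2+1)}\,\dd s .
\end{equation*}
Since $2+\rho_-=1+s^2$, this equals $-\sigma_{\Id}^-\,s\,\dd s$ (recall $\sigma_{\Id}^-=-4/[\rho_-(2+\rho_-)]$, and note $\sigma_{\Id}^-<0$). Next,
\begin{equation*}
L_{g_j}^-\,s\,\dd s=-\frac{8s^2\,\partial_s u_{g_j}}{(s^2-1)(1+s^2)^2}\,\dd s=-\frac{2s^2}{1+s^2}\,\partial_s u_{g_j}\,\frac{4\,\dd s}{(s^2-1)(s^2+1)} .
\end{equation*}
Rewriting $\frac{4\,\dd s}{(s^2-1)(s^2+1)}=\dd\tau/s$ yields \eqref{eq:linear-flux-DtN-g}. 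I would double-check this sign bookkeeping carefully; it is the only place where the exterior case differs, through the minus signs in $L_g^-$ and $\sigma_{\Id}^-$.

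\textbf{Height factorisation.} The formula for $\xi$ in \eqref{eq:coordinates-Epstein}, applied to $g$, reads $\xi_{g_j}=|g_j'|\rho_-/(1+|\psi_{g_j}|^2)$ with $\psi_g$ as in \eqref{eq:def-psig}. Setting
\begin{equation*}
v_{g_j}\coloneqq\log\frac{1+s^2}{1+|\psi_{g_j}|^2}
\end{equation*}
gives \eqref{eq:exterior-height-factorization} directly. Expanding as in \eqref{eq:psi-square},
\begin{equation*}
|\psi_{g_j}|^2=s^2+\rho_-\,\Re\bigl(z\,\Nc(g_j)\bigr)+\tfrac{\rho_-^2}{4}|\Nc(g_j)|^2 .
\end{equation*}
Hence
\begin{equation*}
\Bigl|\frac{1+s^2}{1+|\psi_{g_j}|^2}-1\Bigr|\le\frac{s\,\rho_-|\Nc(g_j)|+\rho_-^2|\Nc(g_j)|^2}{1+|\psi_{g_j}|^2}.
\end{equation*}

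\textbf{Main obstacle.} Unlike the disk case, $s$ is unbounded, so the factor $s$ in the numerator must be absorbed. The denominator is comparable to $1+s^2$ for large $s$: at $\infty$ one has $\rho_-|\Nc(g_j)|=O(s^{-1})$, because $\Nc(g_j)=O(s^{-3})$ when $g_j'(\infty)\neq0$. For the uniform-in-$j$ bounds I would use three facts:
\begin{itemize}
\item[(i)] near $\s^1$, $\rho_-|\Nc(g_j)|$ is uniformly small, by \eqref{eq:boundary-Schwarzian} and compactness in the WP topology;
\item[(ii)] on compact subsets of $\D^*$, the quantities involved are uniformly bounded by normal-family arguments;
\item[(iii)] near $\infty$, apply the inversion $z\mapsto1/z$, which turns $\rho_-\Nc(g_j)$ into a quantity of the interior type, and use the uniform bound $\sup_{\D^*}\rho_-|\Nc(g_j)|\le C$ (the Koebe-type estimate $(|z|^2-1)|\Nc(g)|\le 6$ for univalent maps of $\D^*$).
\end{itemize}
These give $|x|\le1-\delta$ for $x=\frac{1+s^2}{1+|\psi_{g_j}|^2}-1$ wherever needed, with $x$ small near $\s^1$ and near $\infty$. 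Then $|\log(1+x)|\le c_\delta|x|$ gives \eqref{eq:rho-g-bound}.

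On the intermediate compact annuli, the ratio stays bounded away from $0$ and $\infty$ uniformly in $j$, so $|v_{g_j}|$ is uniformly bounded there. To convert this into $|v_{g_j}|\le C\rho_-|\Nc(g_j)|$, I would argue as in Lemma \ref{lm:decompo-Lf}: the bound $|x|\le C\rho_-|\Nc(g_j)|$ together with a uniform upper bound on $x$ controls $\log(1+x)$ by a multiple of $|x|$.

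The delicate point is the lower side: $1+x$ stays away from $0$ because $1+|\psi_{g_j}|^2\le C(1+s^2)$, which follows from the uniform bound on $\rho_-|\Nc(g_j)|$. With this, the estimate holds globally on $\D^*$ with a constant independent of $j$.
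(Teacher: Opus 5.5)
Your proposal is correct and follows the paper's proof. You use the same radial-derivative identity, the same computation $\dd\tau=-\sigma_{\Id}^-\,s\,\dd s$ leading to the stated formula for $L_{g_j}^-\,s\,\dd s$, and the same choice $v_{g_j}=\log\frac{1+s^2}{1+|\psi_{g_j}|^2}$. The paper only says the height estimate is identical to the interior case, whereas you correctly deal with the unbounded factor $s$ near $\infty$: the uniform bound $\rho_-|\Nc(g_j)|\le 6$ makes $1+|\psi_{g_j}|^2$ comparable to $1+s^2$ on all of $\D^*$, which yields $|v_{g_j}|\le C\rho_-|\Nc(g_j)|$ with $C$ independent of $j$.
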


\begin{proof} 
	As in \eqref{eq:normal-derivative-log-f}, the radial derivative of $u_{g_j}$ is given by
	\begin{equation}\label{eq:normal-derivative-log-g}
			\partial_s u_{g_j}(se^{i\theta})  =\Re\left(e^{\ii\theta}\, \Nc(g_j)(se^{\ii\theta})\right).
	\end{equation}
	We have
	\begin{equation}\label{eq:dt-sigma-}
		\begin{aligned}
			\dd \tau & \coloneqq \dd\log h_- \\[2mm]
			& = \left( \frac{2s}{s^2-1} - \frac{(s^2-1)2s(1+s^2)^{-2} }{  \frac{s^2-1}{s^2+1} } \right) \dd s \\[2mm]
			& = \left( \frac{2s}{s^2-1} - \frac{2s }{  s^2+1 } \right) \dd s \\[2mm]
			& = -\sigma_{\Id}^-(se^{\ii\theta})\, s\,\dd s.
		\end{aligned}
	\end{equation}
	From \eqref{eq:normal-derivative-log-g}, we obtain the following expression for $L_{g_j}^-$ in Lemma \ref{prop:exterior-epstein-flux}:
	\begin{equation*}
		\begin{aligned} 
			L_{g_j}^-(se^{\ii\theta})\, s\,\dd s & = -\frac{8\, \Re\left(s\, e^{\ii\theta}\, \Nc(g_j)(s e^{\ii \theta})\right)}{(s^2-1)(s^2+1)^2 }\, s\, \dd s \\[2mm]
			& = \frac{-8\, s^2}{(s^2-1)(s^2+1)^2}\, \dr_s u_{g_j}(s e^{\ii\theta})\, \dd s\\[2mm]
			& = \frac{2s^2}{s^2+1}\, \sigma_{\Id}^-\, \dr_s u_{g_j}(s e^{\ii\theta})\, \dd s\\[2mm]
			& = -\frac{2s}{1+s^2}\,\partial_su_{g_j}(se^{\ii\theta})\,\dd \tau.
		\end{aligned} 
	\end{equation*}
	This proves \eqref{eq:linear-flux-DtN-g}. The proof of \eqref{eq:exterior-height-factorization} and \eqref{eq:rho-g-bound} is identical to that of \eqref{eq:height-factorization} and \eqref{eq:rho-f-bound}.
\end{proof} 

We also use the following localization, which relates Epstein height to
the radial coordinate.

\begin{lemma}\label{lem:small-height-localization}
Let $(\Gamma_j)_{j\in\N}$ be a sequence whose range is contained in a compact subset of the Weil--Petersson space. For every $1>\delta>0$, there exists $c_\delta>0$ such that, for every $j\in\N$, the two Epstein heights satisfy
\begin{equation}\label{eq:height-positive-on-compacta}
 \xi_{f_j}\geq c_\delta\quad \text{ in }\D_{1-\delta},
 \qquad \text{ and }\qquad 
 \xi_{g_j}\geq c_\delta\quad \text{ in }\C\setminus \D_{1+\delta}.
\end{equation}
Hence, if $2\eps<c_\delta$, then, for all $j\geq 0$, we have $\{\xi_{f_j}<2\eps\}\subset \D\setminus \D_{1-\delta}$ and $\{\xi_{g_j}<2\eps\}\subset \D_{1+\delta}\setminus\D$.
\end{lemma}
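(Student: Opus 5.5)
We use the factorisations \eqref{eq:height-factorization} and \eqref{eq:exterior-height-factorization},
\begin{equation*}
	\xi_{f_j}= h_+(r)\, e^{u_{f_j}}\,e^{v_{f_j}}, \qquad \xi_{g_j}=h_-(s)\,e^{u_{g_j}}\,e^{v_{g_j}},
\end{equation*}
and bound each factor from below on the relevant region, uniformly in $j$. For the interior, $h_+(r)\geq h_+(1-\delta)>0$ on $\D_{1-\delta}$, with a bound depending only on $\delta$. By \eqref{eq:rho-f-bound}, $|v_{f_j}|\leq C\rho_+|\Nc(f_j)|\leq C'$, since $\rho_+|\Nc(f_j)|$ is uniformly bounded along the sequence; this bound comes from \eqref{eq:boundary-Schwarzian} and compactness, as noted in the proof of Lemma \ref{lm:decompo-Lf}. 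It remains to bound $u_{f_j}=\log|f_j'|$ from below on $\overline{\D_{1-\delta}}$, uniformly in $j$.

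For this, note that $u_{f_j}$ is harmonic and $\partial_r u_{f_j}=\Re(e^{\ii\theta}\Nc(f_j))$. Hence $|\nabla u_{f_j}|=|\Nc(f_j)|\leq C(1-|z|^2)^{-1}$, with $C$ uniform in $j$, by the same uniform bound. Integrating along a radius from $0$ gives $|u_{f_j}(z)-u_{f_j}(0)|\leq C\log\frac{1}{\delta}$ on $\D_{1-\delta}$, so it suffices to bound $\log|f_j'(0)|$ from below uniformly. This is the main obstacle. We plan to use compactness in the WP topology together with the normalisation \eqref{eq:normalisation}. Under this normalisation, WP convergence implies locally uniform convergence of $f_j$ to the limit uniformising map (the WP topology is finer than the quasiconformal/Teichmüller topology, and normalised conformal maps depend continuously on it). The limit is conformal, so $f'(0)\neq 0$, and therefore $\log|f_j'(0)|$ stays bounded along any convergent subsequence. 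Arguing by contradiction along a subsequence, using the compactness of the range, gives a uniform lower bound. If one prefers to avoid this, an alternative is that $\log|f_j'|$ restricted to $\s^1$ is bounded in $\W^{\frac12,2}$, by \eqref{eq:trace-energies}, with mean controlled by the three-point normalisation; the mean value property then gives $|u_{f_j}(0)|\leq C$.

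The exterior is handled identically. On $\C\setminus\D_{1+\delta}$ we have $h_-(s)\geq h_-(1+\delta)>0$, and $|v_{g_j}|\leq C$ by \eqref{eq:rho-g-bound}. The function $u_{g_j}$ is harmonic on $\D^*\cup\{\infty\}$ with value $\log g_j'(\infty)$ at infinity, and $|\nabla u_{g_j}|\leq C(|z|^2-1)^{-1}$; this decays like $|z|^{-2}$ at infinity, which is integrable along rays. Integrating radially from $\infty$ then reduces the claim to a uniform lower bound on $\log g_j'(\infty)$, obtained by the same compactness argument. Taking $c_\delta$ as the minimum of the two resulting bounds proves \eqref{eq:height-positive-on-compacta}. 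The inclusions of the sublevel sets $\{\xi<2\eps\}$ are then immediate: if $2\eps<c_\delta$, any point with $\xi_{f_j}<2\eps$ cannot lie in $\D_{1-\delta}$, and likewise for $g_j$.
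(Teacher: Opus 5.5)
Your argument is correct, but it does not follow the paper's proof. The paper's proof is a bare citation. It invokes Corollary~3.13 and the proof of Lemma~5.17 of \cite{BBVPW2025}, thereby importing two things:
\begin{itemize}
\item a comparison of the Epstein height with the hyperbolic metric of $\Omega_\pm$ (equivalently, via Koebe, with the distance to $\Gamma$);
\item the uniform-convergence argument used in that lemma.
\end{itemize}

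You stay inside the present paper. You use the factorisations \eqref{eq:height-factorization} and \eqref{eq:exterior-height-factorization}. You control $v_{f_j}$, $v_{g_j}$ and the oscillation of $\log|f_j'|$, $\log|g_j'|$ away from $\mathbb{S}^1$ by the bound $\rho_\pm|\mathscr{N}|\leq C$ and radial integration. This reduces the whole statement to the non-degeneracy of two scalars, $|f_j'(0)|$ and $g_j'(\infty)$.

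What your route buys is transparency: it shows that compactness enters only through these two numbers.
\begin{itemize}
\item In the interior, the gradient bound is universal: $(1-|z|^2)|\mathscr{N}(f)|\leq 6$ for every univalent $f$, by Koebe distortion.
\item In the exterior, $g_j'(\infty)$ is the logarithmic capacity of $\widehat{\mathbb{C}}\setminus\Omega_-$. It is therefore at least $\tfrac14\,\mathrm{diam}(\Gamma_j)$, which is uniformly positive under your normalisation, so no compactness is needed there.
\end{itemize}

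The one step you leave to the literature is that, under the normalisation, WP convergence forces locally uniform convergence of the uniformising maps. This is standard and can be filled in as follows. Convergence of $\log f_j'$ in the Dirichlet space gives $H^1$ convergence of $f_j'$ by the Lebedev--Milin inequality. That gives uniform convergence on $\overline{\mathbb{D}}$, which fixes the affine freedom left undetermined by $\mathscr{N}(f_j)$.

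Your alternative argument also closes, once made quantitative. Lebedev--Milin gives
\[
\mathrm{length}(\Gamma_j)=\int_{\mathbb{S}^1}|f_j'|\leq 2\pi|f_j'(0)|\exp\Big(\tfrac{1}{4\pi}\int_{\mathbb{D}}|\mathscr{N}(f_j)|^2\Big),
\]
and $\mathrm{length}(\Gamma_j)\geq 2\,\mathrm{diam}(\Gamma_j)$. Together these bound $|f_j'(0)|$ from below by the WP energy alone.

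The paper's citation is shorter and more geometric, since it works with $\xi\gtrsim\mathrm{dist}(\cdot,\Gamma)$ directly. However, it hides inside \cite{BBVPW2025} exactly the non-degeneracy input that your proof isolates.
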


\begin{proof}
This follows from Corollary 3.13 and the proof of Lemma 5.17 in \cite{BBVPW2025}.
\end{proof}

\subsection{Poisson extension and the cutoff terms}

For $x,a\in\R$, we use \eqref{eq:cutoff-profile} to define
\begin{equation}\label{eq:Phi-definition}
 \Phi_x(a) \coloneqq w(a-x)-w(-x) =W(e^{a-x})-W(e^{-x}).
\end{equation}
Since \(w\) is nondecreasing by Lemma \ref{lem:cutoff-function-properties}, we have $a\,\Phi_x(a)\geq0$. We rewrite \eqref{eq:integral-w} and \eqref{eq:scale-shift} as follows:
\begin{align}
 \int_{\R}\Phi_x(a)\,\dd x&=\frac{a}{2}, \label{eq:Phi-shift-integral}\\
 \int_{\R}|\Phi_x(a)-\Phi_x(b)|\,\dd x &=\frac{1}{2}|a-b|. \label{eq:Phi-shift-L1}
\end{align}

We now use the Dirichlet-to-Neumann operator $\Lambda$ and the bilinear form $\Er$ defined in Section \ref{sec:Poisson}.

\begin{lemma}
\label{lem:Phi-identities}
For every $u\in \W^{\frac{1}{2},2}(\s^1,\R)$, we have
\begin{equation}\label{eq:Phi-zero-order}
	\int_\R\int_{\s^1}\Phi_x(u)\,\dd\theta\, \dd x =\frac{1}{2} \int_{\s^1} u\,\dd\theta.
\end{equation}
We have
\begin{equation}\label{eq:Phi-energy-density}
	d_u(x) \coloneqq \Er(u,\Phi_x(u))\geq 0.
\end{equation}
We have
\begin{equation}\label{eq:Phi-energy-integral}
 \int_{\R} d_u(x)\,\dd x =\frac{1}{2}\, \Er(u,u). 
\end{equation}
Moreover, the maps $u\in \W^{\frac{1}{2},2}(\s^1,\R)\mapsto \Phi_{\cdot}(u)\in \Ll^1(\R\times \s^1)$ and $u\in  \W^{\frac{1}{2},2}(\s^1,\R)\mapsto d_u\in \Ll^1(\R)$ are continuous. For any $x\in\R$, the map $u\in\W^{\frac{1}{2},2}(\s^1,\R)\mapsto \Phi_x(u)\in \W^{\frac{1}{2},2}(\s^1,\R)$ is also continuous. 
\end{lemma}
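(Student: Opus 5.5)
The proof rests on the pointwise properties of $\Phi_x$ in \eqref{eq:Phi-definition}, Lemma \ref{lem:cutoff-function-properties}, and the Gagliardo form of $\Er$. For \eqref{eq:Phi-zero-order}, we apply Fubini, which is legitimate because $|\Phi_x(u(\theta))|$ is integrable in $x$ with integral $|u(\theta)|/2$ by \eqref{eq:Phi-shift-L1} with $b=0$ (note $\Phi_x(0)=0$), and $u\in \Ll^1(\s^1)$. Integrating \eqref{eq:Phi-shift-integral} in $\theta$ then gives $\frac12\int u$. The same bound shows $\|\Phi_\cdot(u)-\Phi_\cdot(v)\|_{\Ll^1(\R\times\s^1)}\le \frac12\|u-v\|_{\Ll^1(\s^1)}$. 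This gives continuity into $\Ll^1(\R\times\s^1)$.

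For \eqref{eq:Phi-energy-density}, we use the real-variable representation
\[
\Er(u,v)=\frac{1}{4\pi}\int_{\s^1\times\s^1}\frac{(u(\xi)-u(\zeta))(v(\xi)-v(\zeta))}{|\xi-\zeta|^2}\,\dd\xi\,\dd\zeta ,
\]
up to the normalising constant matching \eqref{eq:Dirichlet-form-definition}, checked on Fourier modes. For fixed $x$, the map $a\mapsto \Phi_x(a)$ is nondecreasing and Lipschitz with constant $\|w'\|_\infty\le1$. Hence $(u(\xi)-u(\zeta))(\Phi_x(u(\xi))-\Phi_x(u(\zeta)))\ge0$, which gives $d_u(x)\ge0$. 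The Lipschitz bound also shows $\Phi_x(u)\in\W^{\frac12,2}$ with $[\Phi_x(u)]\le[u]$.

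For \eqref{eq:Phi-energy-integral}, we integrate the Gagliardo integrand in $x$ and use Tonelli, since the integrand is nonnegative. By \eqref{eq:Phi-shift-integral} and linearity, $\int_\R(\Phi_x(a)-\Phi_x(b))\,\dd x=\frac{a-b}{2}$, and this is also $\int_\R|\Phi_x(a)-\Phi_x(b)|\,\dd x$ when $a\ge b$. Each pair $(\xi,\zeta)$ therefore contributes $\frac12(u(\xi)-u(\zeta))^2$, which yields $\frac12\Er(u,u)$.

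The continuity statements are the delicate part.
\begin{enumerate}
\item \emph{Continuity of $u\mapsto\Phi_x(u)$ in $\W^{\frac12,2}$ for fixed $x$.} Given $u_k\to u$ in $\W^{\frac12,2}$, we pass to a subsequence with a.e.\ convergence. Then $\Phi_x(u_k)\to\Phi_x(u)$ in $\Ll^2$. The difference quotients $D_k(\xi,\zeta)=\frac{\Phi_x(u_k)(\xi)-\Phi_x(u_k)(\zeta)}{|\xi-\zeta|}$ converge a.e. They are dominated by $\frac{|u_k(\xi)-u_k(\zeta)|}{|\xi-\zeta|}$, which converges in $\Ll^2(\s^1\times\s^1)$. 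The generalised dominated convergence theorem then gives $D_k\to D$ in $\Ll^2$. The subsequence principle concludes.
\item \emph{Continuity of $u\mapsto d_u$ in $\Ll^1(\R)$.} We have $d_{u_k}(x)\to d_u(x)$ for each $x$ by the previous step and bilinearity of $\Er$, and $d_{u_k}\ge0$. Moreover $\|d_{u_k}\|_{\Ll^1}=\frac12\Er(u_k,u_k)\to\frac12\Er(u,u)=\|d_u\|_{\Ll^1}$. Theorem \ref{th:Brezis-Lieb} (Brezis--Lieb) then gives $d_{u_k}\to d_u$ in $\Ll^1(\R)$.
\end{enumerate}

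The main obstacle is the fixed-$x$ $\W^{\frac12,2}$ continuity in the first step, since $\Phi_x$ is only a Lipschitz nonlinearity. This is why the argument goes through the Gagliardo form together with domination, rather than through Fourier coefficients.
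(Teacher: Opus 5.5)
Your proposal is correct and follows the paper's proof essentially step for step. You use Fubini with \eqref{eq:Phi-shift-integral} and \eqref{eq:Phi-shift-L1} for the zero-order identity and for continuity into $\Ll^1(\R\times\s^1)$, the Gagliardo form together with monotonicity of $\Phi_x$ for $d_u\geq 0$ and for $\int_\R d_u=\frac12\Er(u,u)$, and Brezis--Lieb for the continuity of $u\mapsto d_u$, exactly as the paper does. The one real difference is that you actually prove the $\W^{\frac12,2}$-continuity of $u\mapsto\Phi_x(u)$ (a.e.\ convergence of difference quotients plus generalised dominated convergence), which the paper merely asserts from the Lipschitz bound. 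Minor point: the constant matching $\Er(u,u)=2\pi\sum_n|n||u_n|^2$ is $\frac{1}{2\pi}$, not $\frac{1}{4\pi}$, but this is harmless since it appears on both sides of the identity.
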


\begin{proof}
By \eqref{eq:Phi-shift-integral}, we have
\begin{equation*}
 \forall \theta\in[0,2\pi],\qquad \int_{\R}\Phi_x(u(e^{\ii\theta}))\,\dd x=\frac{1}{2}\, u(e^{\ii\theta}).
\end{equation*}
We obtain \eqref{eq:Phi-zero-order} by Fubini's theorem.

By definition of $\Phi$ in \eqref{eq:Phi-definition}, we have
\begin{equation*}
 	|\Phi_x(a)-\Phi_x(b)| \leq\|w'\|_{L^\infty}|a-b|.
\end{equation*}
Hence, $\Phi_x(u)\in \W^{\frac{1}{2},2}(\s^1)$ and the pairing in \eqref{eq:Phi-energy-density} is well defined. For any $x\in\R$, since $\Phi_x$ is globally Lipschitz, the map $u\in\W^{\frac{1}{2},2}(\s^1,\R)\mapsto \Phi_x(u)\in \W^{\frac{1}{2},2}(\s^1,\R)$ is also continuous.

The function \(a\mapsto\Phi_x(a)\) is nondecreasing. Hence, the integrand in the following formula is pointwise nonnegative:
\begin{align*}
 d_u(x)=\frac{1}{2\pi}\int_{0}^{2\pi}\int_{0}^{2\pi} \frac{(u(e^{\ii\theta})-u(e^{\ii\vp})) (\Phi_x(u(e^{\ii\theta}))-\Phi_x(u(e^{\ii\vp})))}{|e^{\ii\theta}-e^{\ii\varphi}|^2} \,\dd\theta\, \dd\varphi \geq 0.
\end{align*}
Furthermore, by \eqref{eq:Phi-shift-L1}, we have
\begin{equation*}
 \int_{\R} \left|(u(e^{\ii\theta})-u(e^{\ii\vp}))\, (\Phi_x(u(e^{\ii\theta}))-\Phi_x(u(e^{\ii\vp})))\right|\,\dd x  =\frac{1}{2}\, |u(\theta)-u(\varphi)|^2.
\end{equation*}
Hence, we obtain \eqref{eq:Phi-energy-integral} by Fubini's theorem:
\begin{align*}
 \int_\R d_u(x)\,\dd x &=\frac{1}{2\pi}\int_{\s^1} \int_{\s^1} \frac{u(e^{\ii\theta}) -u( e^{\ii \vp} ) }{ |e^{i\theta}-e^{i\vp}|^2}
 \left(\int_{\R} [\Phi_x(u(e^{\ii\theta}))-\Phi_x(u(e^{\ii\vp}))]\,\dd x\right)\,\dd\theta\, \dd\vp \\[2mm]
 &=\frac{1}{4\pi}\int_{\s^1} \int_{\s^1} \frac{|u(e^{\ii\theta})-u(e^{\ii\vp})|^2}{|e^{\ii\theta}-e^{\ii\vp}|^2}\,\dd\theta\, \dd\vp \\[2mm]
 &=\frac{1}{2}\, \Er(u,u).
\end{align*}
The continuity of the map $u\in \W^{\frac{1}{2},2}(\s^1,\R)\mapsto \Phi_x(u)\in \Ll^1(\R\times \s^1)$ follows directly from \eqref{eq:Phi-shift-L1}. To prove the continuity of the map $u\mapsto d_u$, we consider a sequence $(u_j)_{j\in\N}\subset \W^{\frac{1}{2},2}(\s^1,\R)$ converging strongly to some $u\in \W^{\frac{1}{2},2}(\s^1,\R)$. Every subsequence has a further subsequence that converges almost everywhere to $u$. By the continuity of the map $(u\mapsto\Phi_x(u))$ in $\W^{\frac{1}{2},2}$ and Theorem \ref{th:Brezis-Lieb}, every subsequence of $(d_{u_j})_{j\in\N}$ also has a subsequence that converges almost everywhere to $d_u$. Moreover, the $\Ll^1$ norms of $d_{u_j}$ are exactly $\frac{1}{2}\Er(u_j,u_j)$, which converge to $\frac{1}{2}\Er(u,u)$ by strong convergence in $\W^{\frac{1}{2},2}(\s^1)$. This is $\|d_u\|_{\Ll^1(\R)}$. By Theorem \ref{th:Brezis-Lieb}, we obtain $d_{u_j}\to d_u$ in $\Ll^1(\R)$. Hence, $u\mapsto d_u$ is continuous.
\end{proof}

We now prove the monotonicity formula.

\begin{lemma}\label{lem:Poisson-semigroup-monotonicity}
	Let $F\in \Cc^1(\R)$ satisfy $0\leq F'\leq L$. Then, for every $u\in \W^{\frac{1}{2},2}(\s^1;\R)$ and $0<r<1$, we have
	\begin{equation}\label{eq:Poisson-monotonicity}
		0\leq \Er(\Pcal_r u,F(\Pcal_r u)) \leq \Er(u,F(u)).
	\end{equation}
\end{lemma}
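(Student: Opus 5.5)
The lower bound is the nonnegativity argument from Lemma~\ref{lem:Phi-identities}; the upper bound comes from showing that $s\mapsto \Er(\Pcal_{e^{-s}}u,F(\Pcal_{e^{-s}}u))$ is nonincreasing. For the lower bound, apply the double-integral representation of $\Er$ to $v\coloneqq\Pcal_r u$, which is smooth for $r<1$:
\begin{equation*}
	\Er(v,F(v))=\frac{1}{2\pi}\int_0^{2\pi}\int_0^{2\pi}\frac{(v(\theta)-v(\varphi))\,(F(v(\theta))-F(v(\varphi)))}{|e^{\ii\theta}-e^{\ii\varphi}|^2}\,\dd\theta\,\dd\varphi\geq0 .
\end{equation*}
The integrand is pointwise nonnegative because $F$ is nondecreasing.

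For the upper bound, fix smooth $u$ first (trigonometric polynomials, say) and set $v_s\coloneqq\Pcal_{e^{-s}}u$. By \eqref{eq:Poisson-semigroup-generator}, $\dr_s v_s=-\Lambda v_s$. Writing $G(s)\coloneqq\Er(v_s,F(v_s))=\int_{\s^1}\Lambda v_s\,F(v_s)\,\dd\theta$ and using that $\Lambda$ is self-adjoint, we get
\begin{equation*}
	G'(s)=-\int_{\s^1}\Lambda^2 v_s\,F(v_s)\,\dd\theta-\int_{\s^1}\Lambda v_s\,F'(v_s)\,\Lambda v_s\,\dd\theta .
\end{equation*}
The second term is $\leq0$ since $F'\geq0$. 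The first term equals $-\scal{\Lambda v_s}{\Lambda F(v_s)}$, which has no obvious sign, so a direct computation does not close.

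This is where the real work lies, and I would switch to a cleaner route through the harmonic extension. Let $U(re^{\ii\theta})=\Pcal_r u(\theta)$. The key observation is that $\Er(\Pcal_r u,F(\Pcal_r u))$ equals the Dirichlet-type pairing obtained by integrating $\nabla U\cdot\nabla(\text{harmonic extension of }F(\Pcal_r u))$ over the disc $\D_r$, suitably rescaled. Concretely, one has $\Er(a,b)=\int_{\D}\nabla A\cdot\nabla B$, where $A,B$ are the harmonic extensions of $a,b$. Among all extensions of $b$, the harmonic one minimises nothing here, but the Green identity gives
\begin{equation*}
	\int_{\dr\D}\dr_\nu A\, b=\int_{\D}\nabla A\cdot\nabla \tilde B
\end{equation*}
for \emph{any} extension $\tilde B$ of $b$, since $A$ is harmonic. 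Choosing $\tilde B=F(U)$, which extends $F(u)$, yields
\begin{equation*}
	\Er(u,F(u))=\int_{\D}F'(U)\,|\nabla U|^2\,\dd^2z .
\end{equation*}
The same identity on $\D_r$, after rescaling $z\mapsto rz$ (the Dirichlet integral is scale invariant in two dimensions), gives $\Er(\Pcal_ru,F(\Pcal_ru))=\int_{\D_r}F'(U)|\nabla U|^2\,\dd^2z$. The integrand is nonnegative, so this quantity is nondecreasing in $r$, and both inequalities in \eqref{eq:Poisson-monotonicity} follow at once; this also yields a second proof of nonnegativity.

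To remove the smoothness assumption, approximate $u\in\W^{\frac12,2}$ by smooth $u_k\to u$ in $\W^{\frac12,2}$. The left side converges because $\Pcal_r$ is smoothing. The right side converges because $F$ is Lipschitz, so $F(u_k)\to F(u)$ in $\W^{\frac12,2}$ by the same argument as the Lipschitz-continuity step in Lemma~\ref{lem:Phi-identities}, combined with Brezis--Lieb (Theorem~\ref{th:Brezis-Lieb}). Alternatively, the formula $\int_{\D}F'(U)|\nabla U|^2$ with $U\in\W^{1,2}(\D)$ passes to the limit by dominated convergence along an a.e.-convergent subsequence, using $0\leq F'\leq L$. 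The only delicate point is justifying Green's identity with $\tilde B=F(U)\in\W^{1,2}(\D)$, whose trace is $F(u)$. This holds because $\Lambda u\in\W^{-\frac12,2}$ pairs with traces of $\W^{1,2}$ functions, which is exactly the weak definition of the Dirichlet-to-Neumann map.
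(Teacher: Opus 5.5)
Your proof is correct, but it packages the argument differently from the paper. The paper stays on the circle. For smooth $u$ it sets $v(s,\cdot)=\Pcal_{e^{-s}}u$, takes a primitive $G$ of $F$, and shows that $J(s)=\int_{\s^1}G(v)\,\dd\theta$ satisfies $J'(s)=-\Er(v,F(v))$. Using $\dr_{ss}^2v=-\dr_{\theta\theta}^2v$, it then gets
\begin{equation*}
	J''(s)=\int_{\s^1}F'(v)\bigl(|\dr_s v|^2+|\dr_\theta v|^2\bigr)\,\dd\theta\geq 0,
\end{equation*}
so $s\mapsto\Er(\Pcal_{e^{-s}}u,F(\Pcal_{e^{-s}}u))$ is nonincreasing. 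Nonnegativity comes separately from the double-integral formula, and general $u$ is reached by approximating with $\Pcal_\rho u$.

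In fact the direct computation you abandoned closes in exactly this way. On $\s^1$ one has $\Lambda^2=-\dr_\theta^2$, since both have symbol $n^2$. Hence $-\int_{\s^1}\Lambda^2v_s\,F(v_s)\,\dd\theta=-\int_{\s^1}F'(v_s)|\dr_\theta v_s|^2\,\dd\theta\leq 0$, and your $G'(s)$ is precisely the paper's $-J''(s)$.

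Your Green-identity formula $\Er(\Pcal_ru,F(\Pcal_ru))=\int_{\D_r}F'(U)|\nabla U|^2\,\dd^2z$ is the integrated version of the same fact. With $r=e^{-s}$ one has $|\dr_sv|^2+|\dr_\theta v|^2=r^2|\nabla U|^2$ and $\dd s\,\dd\theta=r^{-1}\dd r\,\dd\theta$. Since $J'(\infty)=0$, it follows that $-J'(s)=\int_s^\infty J''$ is exactly your disc integral.

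Your version buys two things. First, it gives an exact closed formula that yields both inequalities at once. Second, it works directly for $u\in\W^{\frac12,2}$ without any density step. Indeed, $F(U)\in\W^{1,2}(\D)$ has trace $F(u)$ and gradient $F'(U)\nabla U$. Moreover, $\int_\D\nabla U\cdot\nabla\tilde B=\Er(u,b)$ for every $\tilde B\in\W^{1,2}(\D)$ with trace $b$, because $\tilde B$ minus the harmonic extension of $b$ lies in $\W^{1,2}_0(\D)$.

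If you keep the approximation instead, weak convergence $F(u_k)\rightharpoonup F(u)$ in $\W^{\frac12,2}$ already suffices. This follows from boundedness plus $\Ll^2$ convergence, both given by the Lipschitz bound, together with $\Lambda u_k\to\Lambda u$ strongly in $\W^{-\frac12,2}$. The appeal to Brezis--Lieb is therefore unnecessary.
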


\begin{proof}
	We first assume that $u$ is $\Cc^{\infty}$.  For $s\geq0$, we denote $v(s,\theta)\coloneqq \Pcal_{e^{-s}}u(\theta)$.
	By Section \ref{sec:Poisson}, we have 
	\begin{equation}\label{eq:syst-v}
		\begin{cases} 
			\dr_s v =-\Lambda v, \\[2mm]
			\dr_{ss}^2v+\dr_{\theta\theta}^2 v=0.
		\end{cases}
	\end{equation} 	
	Let $G$ be a primitive of $F$, and define
	\begin{equation*}
		J(s) \coloneqq \int_{\s^1} G(v(s,\theta))\,\dd\theta.
	\end{equation*}
	Since $\partial_sv=-\Lambda v$, we obtain from the first equation of \eqref{eq:syst-v}
	\begin{equation}\label{eq:circular-mean-first-derivative}
		J'(s) =\int_{\s^1} F(v)\, \partial_sv\,\dd\theta =- \int_{\s^1} F(v)\, \Lambda v\, \dd\theta =-\Er(v,F(v)).
	\end{equation}
	Using $\dr^2_{ss} v=-\dr^2_{\theta\theta} v$ and integrating by parts in $\theta$, we also obtain
	\begin{equation}\label{eq:circular-mean-second-derivative}
		J''(s) =\int_{\s^1} \left[ F'(v)|\dr_s v|^2 +F(v)\dr_{ss}^2 v \right]\dd\theta  
		=\int_{\s^1} F'(v) \left( |\partial_sv|^2+|\partial_\theta v|^2\right)\dd\theta
		\geq0.
	\end{equation}
	Hence $J$ is convex and the map $s\mapsto -J'(s) = \Er(\Pcal_{e^{-s}}u,F(\Pcal_{e^{-s}}u))$ is nonincreasing.
	
	Moreover, we have $-J'\geq 0$. Indeed, we have 
	\begin{equation*}
		\Er(v,F(v)) =\frac{1}{2\pi}\int_{\s^1}\int_{\s^1} \frac{ (v(\theta)-v(\vp)) (F(v(\theta))-F(v(\vp))) }{ |e^{\ii\theta}-e^{\ii\vp}|^2 } \,\dd\theta\, \dd\vp.
	\end{equation*}
	The integrand is pointwise nonnegative because $F$ is nondecreasing. Taking $s=-\log r$, we obtain \eqref{eq:Poisson-monotonicity} for $u$ of class $\Cc^{\infty}$.
	
	For $u\in \W^{\frac{1}{2},2}(\s^1)$, we let $u_\rho\coloneqq \Pcal_\rho u$ for $0<\rho<1$. 
	Then each $u_\rho$ is smooth, with $u_{\rho}\to u$ and $\Pcal_r u_{\rho}\to \Pcal_r u$ in $\W^{\frac{1}{2},2}(\s^1)$ as $\rho\to 1$. This follows from the Fourier-series formulae. Hence, we can apply \eqref{eq:Poisson-monotonicity} to $u_{\rho}$ and let $\rho\to 1$.
\end{proof}

Given $t\geq 0$, we define $r(t)\geq 0$ such that $h_+(r(t))=e^{-t}$, where $h$ is defined in \eqref{eq:round-height-h}. Then we have the formula 
\begin{equation}\label{eq:r-of-t}
 	r(t) = \left( \frac{1-e^{-t}}{1+e^{-t}} \right)^{\frac{1}{2}}.
\end{equation}
For $T>0$ and $x\geq-T$, we denote
\begin{equation}\label{eq:Poisson-radius}
 	r_{T,x} \coloneqq r(T+x).
\end{equation}
	Then, for every fixed $x$,
\begin{equation}\label{eq:limit-r-c}
	r_{T,x}\xrightarrow[T\to \infty]{} 1.
\end{equation}

\begin{proposition}\label{prop:uniform-Poisson-regularization}
	Let $K$ be a compact subset of $\W^{\frac{1}{2},2}(\s^1,\R)$. For every $T>0$, we consider a bounded measurable function $s_{T,\cdot}\colon [-T,\infty) \to [0,+\infty)$ such that there exists $C>0$ for which
	\begin{equation}\label{eq:hyp-a}
		\begin{cases}
			\forall T\geq 0,\ \forall x\geq -T,\qquad 0\leq s_{T,x}\leq C, \\[2mm]
			\forall x\in\R,\qquad s_{T,x} \xrightarrow[T\to +\infty]{} 1.
		\end{cases}
	\end{equation}
	Then
	\begin{equation}\label{eq:uniform-Poisson-zero-order}
		\lim_{T\to\infty}\sup_{u\in K} \left| \int_{-T}^{\infty}\int_{\s^1} \Phi_x(\Pcal_{r_{T,x}}u)\,\dd\theta\, \dd x -\frac{1}{2}\int_{\s^1}u\,\dd\theta \right| =0,
	\end{equation}
	and 
	\begin{equation}\label{eq:uniform-Poisson-energy}
		\lim_{T\to\infty}\sup_{u\in K} \left| \int_{-T}^{\infty}s_{T,x}\, \Er\left( \Pcal_{r_{T,x}} u, \Phi_x( \Pcal_{r_{T,x}} u) \right)\,\dd x -\frac{1}{2}\Er(u,u) \right| =0.
	\end{equation}
\end{proposition}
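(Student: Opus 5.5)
Both limits will come from dominated convergence in $x$, combined with the fact that $r_{T,x}\to 1$ pointwise by \eqref{eq:limit-r-c}. Lemma \ref{lem:Phi-identities} identifies the limiting integrals, and compactness of $K$ makes the convergence uniform in $u$. To get uniformity, it suffices to prove the convergence for arbitrary sequences $T_k\to\infty$ and $u_k\to u$ in $K$ (sequential compactness). The common difficulty is that $x$ ranges over a half-line growing to $\R$, so we need a dominating function in $x$ that is integrable on $\R$ and independent of $T$ and $u$.

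For \eqref{eq:uniform-Poisson-zero-order}, I will compare with $\int_{\R}\int_{\s^1}\Phi_x(u)\,\dd\theta\,\dd x=\frac12\int u$ from \eqref{eq:Phi-zero-order}. Write $\int_{\s^1}\Phi_x(\Pcal_r u)\,\dd\theta$. By \eqref{eq:Phi-shift-L1}, for fixed $\theta$ the $x$-integral of $|\Phi_x(a)-\Phi_x(b)|$ is $\frac12|a-b|$. However, the radius depends on $x$, so \eqref{eq:Phi-shift-L1} cannot be used directly. Instead, split $x\in[-T,-M]$, $x\in[-M,M]$, and $x\geq M$.
\begin{itemize}
\item On $x\geq M$, $\Phi_x(a)=w(a-x)-w(-x)$ with $w(y)=j_\infty e^{2y}$ for $y\le 0$. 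So $|\Phi_x(a)|\le C e^{-2x}(e^{2|a|}+1)$ when $a\le x$. Using a bound on the exponential integrability of $\Pcal_r u$ uniform on compacta of $\W^{\frac12,2}$ (Moser--Trudinger), the tail is $O(e^{-cM})$ uniformly. A cleaner route is to write $\Phi_x(a)=\int_0^a w'(s-x)\,\dd s$ and $|\Phi_x(a)|\le |a|\sup_{|s|\le|a|}w'(s-x)$, then split on $\{|a|\le x/2\}$ and its complement. The complement is controlled by $\int|a|\mathbf 1_{|a|>x/2}$, which is uniformly small by $\Ll^2$ compactness and maximal-function bounds $\sup_r|\Pcal_r u|\le Mu$.
\item On $x\le -M$, $w(y)=\frac12$ for $y\geq\log 2$, so $\Phi_x(a)=0$ unless $a\le -x+\log 2$. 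The same truncation argument with $|a|>|x|/2$ applies.
\item On $[-M,M]$, $\Pcal_{r_{T,x}}u\to u$ in $\Ll^1$ uniformly in $x$, and $\Phi_x$ is Lipschitz, so this piece converges.
\end{itemize}
Domination throughout uses the Hardy--Littlewood maximal function $\sup_{r<1}|\Pcal_ru|\le \mathcal M u$, which lies in $\Ll^2$ with norm controlled uniformly on $K$; its tails are uniformly integrable because $K$ is compact in $\Ll^2$.

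For \eqref{eq:uniform-Poisson-energy}, I use Lemma \ref{lem:Poisson-semigroup-monotonicity} with $F=\Phi_x$, which is valid since $0\le\Phi_x'\le\|w'\|_\infty\le 1$. Set $d_u(x)=\Er(u,\Phi_x(u))$. Then
\[
0\le \Er(\Pcal_r u,\Phi_x(\Pcal_r u))\le d_u(x),
\]
and by Lemma \ref{lem:Phi-identities} $d_u$ lies in $\Ll^1(\R)$, depends continuously on $u$, and has integral $\frac12\Er(u,u)$. Along a sequence $u_k\to u$, $T_k\to\infty$, the integrand is dominated by $C\,d_{u_k}(x)$. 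Since $d_{u_k}\to d_u$ in $\Ll^1$, the generalized dominated convergence theorem applies. For the pointwise limit at fixed $x$: $\Pcal_{r_{T_k,x}}u_k\to u$ in $\W^{\frac12,2}$, because $\Pcal_r$ is a contraction and $\Pcal_r u\to u$. Continuity of $\Phi_x$ in $\W^{\frac12,2}$ then gives $\Er(\Pcal u_k,\Phi_x(\Pcal u_k))\to d_u(x)$, and $s_{T_k,x}\to1$. The limit is therefore $\int d_u=\frac12\Er(u,u)$. Since every sequence has this limit, the supremum over $K$ tends to $0$.

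The main obstacle is the domination in \eqref{eq:uniform-Poisson-zero-order}, where no monotonicity is available. I would first try the maximal-function bound together with the explicit support and decay structure of $w$ described above.
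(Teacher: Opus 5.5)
Your proposal is correct. For \eqref{eq:uniform-Poisson-energy} it is the paper's argument: reduction to sequences $T_k\to\infty$, $u_k\to u$ in $K$; the domination $0\le G_k\le C\,d_{u_k}$ from Lemma \ref{lem:Poisson-semigroup-monotonicity} with $F=\Phi_x$; $d_{u_k}\to d_u$ in $\Ll^1(\R)$ by Lemma \ref{lem:Phi-identities}; pointwise convergence at fixed $x$; and generalized dominated convergence (the paper phrases this as Vitali plus tail control).

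For \eqref{eq:uniform-Poisson-zero-order} you use the same input as the paper, namely $\sup_{r<1}|\Pcal_r u|\le\mathcal M u$ together with $\Ll^2$-convergence of the maximal functions. The difference is how you dominate the $x$-tails.
\begin{itemize}
\item The paper uses one envelope, $H_{A(\theta)}(x)=w(A(\theta)-x)-w(-A(\theta)-x)$ with $A=\Pcal_*u$. By monotonicity of $w$ it dominates $|\Phi_x(\Pcal_r u(\theta))|$ for every $r$, and by \eqref{eq:scale-shift} its $x$-integral is exactly $A(\theta)$. Theorem \ref{th:Brezis-Lieb} then gives $H_{A_j}\to H_A$ in $\Ll^1(\R\times\s^1)$, so both tails are handled at once. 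This only uses that $w$ is monotone with $\int w'=\frac12$.
\item You use the explicit profile $w(y)=j_\infty e^{2y}$ for $y\le 0$ and $w=\frac12$ for $y\ge\log 2$, which holds for every admissible $\eta$. This gives quantitative tail bounds of the form $Ce^{-M}\|\mathcal M u\|_{\Ll^1}+C\int_{\{\mathcal Mu>M/2\}}(\mathcal Mu)^2\,\dd\theta$. These are uniformly small on $K$ because $|\mathcal Mu-\mathcal Mv|\le\mathcal M(u-v)$, so $\mathcal M$ is continuous on $\Ll^2$ and maps $K$ to a compact set.
\end{itemize}
Your route avoids Brezis--Lieb and gives explicit rates, but it depends on the shape of $w$; the paper's envelope is more economical and more robust. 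The Moser--Trudinger detour is unnecessary.

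There is one sign slip to fix. On $x\le -M$ with $M\ge\log 2$, you have $w(-x)=\frac12$, and $\Phi_x(a)=0$ as soon as $a-x\ge\log 2$, that is, $a\ge x+\log 2$. Hence $\Phi_x(a)\neq 0$ forces $a<x+\log 2=\log 2-|x|$. It does not give $a\le -x+\log 2$ as you wrote: that reads $a\le |x|+\log 2$ and carries no information. With the correct sign, $|a|\ge |x|/2$ once $|x|\ge 2\log 2$. Your truncation then bounds this tail by $\int_{\s^1}\mathcal Mu\,(2\mathcal Mu-M)_+\,\dd\theta$, as you intended.
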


\begin{proof}
	It is enough to consider arbitrary sequences $T_j\to\infty$ and $u_j\in  K$. By compactness of $K$, after passing to a subsequence, we may assume that $u_j\to u\in K$ strongly in $\W^{\frac{1}{2},2}(\s^1)$.
	
	Since $\Pcal_r u\to u$ in $\W^{\frac{1}{2},2}(\s^1)$ as $r\to 1^-$, using \eqref{eq:limit-r-c}, we have, for every fixed $x\in\R$,
	\begin{equation}\label{eq:moving-Poisson-Hhalf-convergence}
		\begin{split}
			\|\Pcal_{r_{T_j,x}}u_j-u\|_{\W^{\frac{1}{2},2}} &\leq \|\Pcal_{r_{T_j,x}}(u_j-u)\|_{\W^{\frac{1}{2},2}} +\|\Pcal_{r_{T_j,x}}u-u\|_{\W^{\frac{1}{2},2}} \\[2mm]
			&\leq \|u_j-u\|_{\W^{\frac{1}{2},2}} +\|\Pcal_{r_{T_j,x}}u-u\|_{\W^{\frac{1}{2},2}} \xrightarrow[j\to \infty]{} 0.
		\end{split}
	\end{equation}
	We first prove \eqref{eq:uniform-Poisson-zero-order}. Define the radial
	Poisson maximal function by
	\begin{equation*}
		\Pcal_*v(\theta) = \sup_{0<r<1}|\Pcal_r v(\theta)|.
	\end{equation*}
	Using the standard comparison of the Poisson maximal function with the Hardy--Littlewood maximal function, together with the boundedness of the maximal function in $\Ll^2$ (see, for instance, \cite[Theorems 2.1.6 and 2.1.10, Example 2.1.13, pp.~88--93, Exercise~3.1.7, p.~183]{Grafakos2008}), we obtain 
	\begin{equation}
		\label{eq:Poisson-maximal-L2}
		\|\Pcal_*v\|_{\Ll^2(\s^1)} \leq C\|v\|_{\Ll^2(\s^1)}.
	\end{equation}
	For $A\geq0$, we denote
	\begin{equation*}
		H_A(x)\coloneqq w(A-x)-w(-A-x) = \Phi_x(A) - \Phi_x(-A).
	\end{equation*}
	If $|a|\leq A$, we deduce from the monotonicity of $w$ in Lemma \ref{lem:cutoff-function-properties} that
	\begin{equation}\label{eq:bound-Phi-H}
		|\Phi_x(a)| = |w(a-x)-w(-x)| \leq H_A(x).
	\end{equation}
	Moreover, we obtain from \eqref{eq:scale-shift} that 
	\begin{equation}\label{eq:Phi-maximal-envelope}
		\int_{\R} H_A(x)\,\dd x=A.
	\end{equation}
	We denote $A_j\coloneqq \Pcal_* u_j$ and $A\coloneqq \Pcal_* u$.
	By \eqref{eq:Poisson-maximal-L2}, we have\footnote{By the triangle inequality, we have $A_j \leq \sup_r \left[ |\Pcal_r(u)| + |\Pcal_r(u_j-u)| \right] \leq A + \Pcal_*(u_j-u)$. Conversely, we also have $A\leq A_j + \Pcal_*(u-u_j)$.}
	\begin{equation}\label{eq:Aj-to-A-L2}
		\|A_j-A\|_{\Ll^2(\s^1)}  \leq \|\Pcal_*(u_j-u)\|_{\Ll^2(\s^1)} \leq C\|u_j-u\|_{\Ll^2(\s^1)} \xrightarrow[j\to \infty]{} 0.
	\end{equation}
	After passing to a further subsequence, we may assume that
	\(A_j\to A\) almost everywhere on $\s^1$. Hence, we have $H_{A_j(\theta)}(x)\to H_{A(\theta)}(x)$ as $j\to \infty$ for a.e.\ $(x,\theta)\in\R\times\s^1$. On the other hand, we obtain from \eqref{eq:Phi-maximal-envelope} and \eqref{eq:Aj-to-A-L2} that 
	\begin{equation*}
		\int_{\s^1}\int_{\R} H_{A_j(\theta)}(x)\,\dd x\, \dd\theta = \|A_j\|_{L^1(\s^1)} \xrightarrow[j\to \infty]{} \|A\|_{L^1(\s^1)}.
	\end{equation*}
	We deduce from Theorem \ref{th:Brezis-Lieb} that
	\begin{equation}\label{eq:Poisson-envelope-L1-convergence}
		H_{A_j(\theta)}(x) \xrightarrow[j\to \infty]{\Ll^1(\R\times\s^1)} H_{A(\theta)}(x).
	\end{equation}
	We denote 
	\begin{equation*}
		\forall x\in\R,\ \forall \theta\in\s^1,\qquad F_j(x,\theta) \coloneqq  \mathbf{1}_{[-T_j,\infty)}(x)\,  \Phi_x(\Pcal_{r_{T_j,x}}u_j(\theta)).
	\end{equation*}
	Let $M>0$. We obtain from \eqref{eq:limit-r-c} and the monotonicity of the map $(x\mapsto r_{T,x})$ that 
	\begin{equation}\label{eq:Poisson-local-uniform-L2}
		\sup_{|x|\leq M} \|\Pcal_{r_{T_j,x}}u_j-u\|_{\Ll^2(\s^1)} \leq \|u_j-u\|_{\Ll^2(\s^1)} + \|\Pcal_{r_{T_j,-M}}u-u\|_{\Ll^2(\s^1)} \xrightarrow[j\to\infty]{} 0.
	\end{equation}
	Because $w$ is Lipschitz, we obtain that $F_j\to \Phi_x(u(\theta))$ in $\Ll^1([-M,M]\times \s^1)$. From \eqref{eq:bound-Phi-H}, we deduce that $|F_j(x,\theta)| \leq H_{A_j(\theta)}(x)$.
	Using \eqref{eq:Poisson-envelope-L1-convergence}, the dominated convergence theorem, and the fact that the tails $\|F_j\|_{L^1((\R\setminus [-M,M])\times \s^1)}$ converge uniformly to $0$, first letting $j\to\infty$ and then $M\to\infty$ gives
	\begin{equation*}
		F_j \xrightarrow[j\to \infty]{\Ll^1(\R\times\T)} \Phi_x(u(\theta)) .
	\end{equation*}
	Combined with \eqref{eq:Phi-zero-order}, we obtain \eqref{eq:uniform-Poisson-zero-order} along the chosen subsequence. Since the sequences $(T_j)_{j\in\N}$ and $(u_j)_{j\in\N}$ are arbitrary, we obtain \eqref{eq:uniform-Poisson-zero-order}.
	
	We now prove \eqref{eq:uniform-Poisson-energy}. We denote
	\begin{equation*}
		G_j(x) \coloneqq \mathbf{1}_{[-T_j,\infty)}(x)\, s_{T_j,x}\,  \Er \left( \Pcal_{r_{T_j,x}}u_j, \Phi_x(\Pcal_{r_{T_j,x}}u_j) \right).
	\end{equation*}
	Applying Lemma \ref{lem:Poisson-semigroup-monotonicity} with $F=\Phi_x$ (which is possible by Lemma \ref{lem:cutoff-function-properties}, since $\dr_a \Phi_x(a) = w'(a-x)\in [0,1]$) and using \eqref{eq:hyp-a}, we obtain
	\begin{equation}\label{eq:Poisson-density-domination}
		0\leq G_j(x)\leq C\, \Er\left(u_j,\Phi_x(u_j)\right) = C\, d_{u_j}(x).
	\end{equation}
	Let $x\in\R$. By \eqref{eq:moving-Poisson-Hhalf-convergence} and the fact that $\Phi_x(\cdot)$ is globally Lipschitz, we have
	\begin{equation*}
		\Phi_x(\Pcal_{r_{T_j,x}}u_j) \xrightarrow[j\to \infty]{\W^{\frac{1}{2},2}(\s^1)} \Phi_x(u).
	\end{equation*}
	Since $s_{T_j,x}\to1$ as $j\to\infty$ by \eqref{eq:hyp-a} and $\Er$ is continuous on $\W^{\frac{1}{2},2}(\s^1)$, it follows that $G_j(x) \to \Er(u,\Phi_x(u))=d_u(x)$ a.e.
	
	By Lemma \ref{lem:Phi-identities}, we have $d_{u_j}\to d_u$ in $\Ll^1(\R)$. Thus, the family $(d_{u_j})_{j\in\N}$ is uniformly integrable. Moreover, for any $M>0$, we deduce from \eqref{eq:Poisson-density-domination} 
	\begin{equation}\label{eq:tail-G}
		\lim_{M\to\infty}\limsup_{j\to\infty} \|G_j-d_{u_j}\|_{\Ll^1(\R\setminus[-M,M])} \leq \lim_{M\to \infty} \limsup_{j\to\infty} C\|d_{u_j}\|_{\Ll^1(\R\setminus[-M,M])} =0.
	\end{equation}
	Together with \eqref{eq:Poisson-density-domination} and the a.e.\ convergence, this allows us to apply Vitali's theorem and obtain $G_j\to d_u$ in $\Ll^1([-M,M])$ for any $M>0$. Thus, for any $M>0$,
	\begin{equation*}
		\limsup_{j\to\infty}\|G_j-d_u\|_{\Ll^1(\R)} = \limsup_{j\to\infty}\|G_j-d_u\|_{\Ll^1(\R\setminus [-M,M])}.
	\end{equation*}
	We obtain from \eqref{eq:tail-G} that 
	\begin{equation}\label{eq:Poisson-energy-L1-convergence}
		G_j \xrightarrow[j\to \infty]{\Ll^1(\R)} d_u.
	\end{equation}
	Integrating and combining the result with \eqref{eq:Phi-energy-integral} gives \eqref{eq:uniform-Poisson-energy} along the chosen subsequence. Since the sequences $(T_j)_{j\in\N}$ and $(u_j)_{j\in\N}$ are arbitrary, we obtain \eqref{eq:uniform-Poisson-energy}.
\end{proof}

We now restore the exact Epstein height. The functions $u_f$ and $u_g$ are defined in Lemma \ref{lm:decompo-Lf} and Lemma \ref{lm:decompo-Lg}, respectively. The functions $u_{\pm}$ are defined above \eqref{eq:trace-energies}. In particular, we have $u_f(re^{\ii\theta})=\Pcal_r u_+(\theta)$ and $\xi_f=h(r)e^{u_f+v_f}$.

\begin{lemma}\label{lem:uniform-height-correction}
	Let $(\Gamma_j)_{j\in\N}$ be a sequence whose range is contained in a compact subset of the Weil--Petersson space. Then
	\begin{equation}\label{eq:interior-height-replacement}
		\lim_{\eps \to 0^+} \sup_{j\in\N} \left| \int_{\D} \left[ W(\xi_{f_j}/\eps) -W(h_+ e^{u_{f_j}}/\eps) \right] (\sigma_{\Id}^++L_{f_j}^+)\,\dd x\, \dd y \right| =0.
	\end{equation}
	Similarly, for the exterior part,
	\begin{equation}\label{eq:exterior-height-replacement}
		\lim_{\eps \to 0^+} \sup_{j\in\N} \left| \int_{\D^*} \left[ W(\xi_{g_j}/\eps) -W(h_- e^{u_{g_j}}/\eps) \right] (\sigma_{\Id}^-+L^-_{g_j})\,\dd x\, \dd y \right| =0.
	\end{equation}
\end{lemma}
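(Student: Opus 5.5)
We work in the coordinates of Lemma \ref{lm:decompo-Lf}, namely $(t,\theta)$ with $t=-\log h_+(r)$. By \eqref{eq:dt-sigma} and \eqref{eq:linear-flux-DtN}, the measure $(\sigma_{\Id}^++L_{f_j}^+)\,r\,\dd r\,\dd\theta$ becomes
\[
\Bigl(1+\tfrac{2r}{1+r^2}\,\dr_r u_{f_j}\Bigr)\,\dd t\,\dd\theta .
\]
We set $\eps=e^{-T}$. By \eqref{eq:height-factorization},
\[
\log(\xi_{f_j}/\eps)=T-t+u_{f_j}+v_{f_j}, \qquad \log(h_+e^{u_{f_j}}/\eps)=T-t+u_{f_j}.
\]
The difference of the $W$ terms is therefore $w(T-t+u+v)-w(T-t+u)$. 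Since $0\le w'\le 1$ and $w'$ is supported where $W$ is non-constant, this difference is bounded by $|v_{f_j}|\le C\rho_+|\Nc(f_j)|$, using \eqref{eq:rho-f-bound}. It vanishes unless $T-t+u_{f_j}+\theta' v_{f_j}\le \log 2$ for some $\theta'\in[0,1]$.

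\textbf{Localization.} By Lemma \ref{lem:small-height-localization}, once $\eps$ is small the difference of $W$ terms is supported in the annulus $1-\delta<|z|<1$, uniformly in $j$. Indeed, both arguments are comparable to heights bounded below on $\D_{1-\delta}$, because $|v_{f_j}|$ is uniformly bounded. On $\D_{1-\delta}$ both $W$ values equal $\tfrac12$ for $\eps<c_\delta/2$, so the integrand is zero there. It therefore suffices to control the annulus, where $\rho_+|\Nc(f_j)|\le\omega(\delta)$ uniformly in $j$. Here $\omega(\delta)\to0$ by \eqref{eq:boundary-Schwarzian}, and this is uniform over the compact family by compactness in the WP topology.

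\textbf{Estimating the two parts of the density.}

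For the $L_{f_j}^+$ part, we bound
\[
|L_{f_j}^+|\le C|\Nc(f_j)|/\rho_+ .
\]
The integrand is then at most $C\rho_+|\Nc(f_j)|\cdot|\Nc(f_j)|/\rho_+=C|\Nc(f_j)|^2$ on the annulus. Its integral over the annulus is at most $C\|\Nc(f_j)\|^2_{\Ll^2(\D\setminus\D_{1-\delta})}$, which is uniformly small in $j$ by compactness in $\Ll^2$.

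For the $\sigma_{\Id}^+$ part, we use $\sigma_{\Id}^+ r\,\dd r=\dd t$. We integrate in $t$ at fixed $\theta$ and use \eqref{eq:Phi-shift-L1}-type reasoning. The function $t\mapsto w(T-t+a(t))-w(T-t+b(t))$ is controlled by $|a-b|\,w'$ evaluated along $T-t+\cdot$. Its $\dd t$-integral is at most $\sup_t|v_{f_j}(\cdot,\theta)|$ times $\int w'\le\tfrac12$, up to a factor accounting for the $t$-dependence of $u_{f_j}$.

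This last step is the main obstacle. The change of variable $t\mapsto T-t+u_{f_j}(t,\theta)$ is not a translation, and $\dr_t u_{f_j}$ may be large. The first thing to try is to split the range where the difference is nonzero, which has $t\approx T+u_{f_j}$, i.e. $\rho_+\approx\eps e^{-u_{f_j}}$. There the integrand is bounded by
\[
C\,\frac{\rho_+|\Nc(f_j)|}{\rho_+}=C|\Nc(f_j)| .
\]
We then apply Cauchy--Schwarz over the region $\{\xi_{f_j}<2\eps\}\cup\{h_+e^{u_{f_j}}<2\eps\}$. The $\Ll^2$ norm of $\Nc(f_j)$ on this region is uniformly small, since the region lies in a thin annulus. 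Its area is bounded, via the height comparison, by something like $\int_{\s^1} e^{-u_{f_j}}\eps\,\dd\theta$. This vanishes as $\eps\to0$ provided $e^{-u_{f_j}}$ is uniformly bounded in $\Ll^1(\s^1)$ along the compact family; exponential integrability of $\W^{\frac12,2}$ functions and compactness give this.

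A cleaner fallback keeps the bound $|v|\,w'(\cdot)$ and writes
\[
\int |v|\,w'(T-t+u+\vartheta v)\,\dd t\lesssim\sup|v|\cdot O(1)+\Bigl|\int |v|\,w'\,\dr_t(u+\vartheta v)\,\dd t\Bigr| .
\]
The last term is again bounded by $\int|\Nc|^2$ over the annulus, because $\dr_t u\sim\rho_+\dr_r u$. Either way, every bound reduces to $\sup_{\text{annulus}}\rho_+|\Nc(f_j)|$ or $\|\Nc(f_j)\|_{\Ll^2(\text{annulus})}$, both uniformly small. We then let $\eps\to0$ and afterwards $\delta\to0$.

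The exterior estimate \eqref{eq:exterior-height-replacement} follows identically, using Lemma \ref{lm:decompo-Lg}, \eqref{eq:dt-sigma-} and \eqref{eq:rho-g-bound}.
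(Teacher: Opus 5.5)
Your proposal is correct and, once the detours are removed, it is the paper's argument: you localize to $A_\delta^+=\{1-\delta<|z|<1\}$ via Lemma \ref{lem:small-height-localization}, bound the difference of the $W$ terms by $\|w'\|_{\Ll^\infty}|v_{f_j}|\leq C\rho_+|\Nc(f_j)|$, and then use the pointwise bounds $|v_{f_j}|\,|L_{f_j}^+|\leq C|\Nc(f_j)|^2$ and $|v_{f_j}|\,\sigma_{\Id}^+\leq C|\Nc(f_j)|$ together with Cauchy--Schwarz on $A_\delta^+$. The ``main obstacle'' you flag is not a real one: the bound $|v_{f_j}|\,\sigma_{\Id}^+\leq C|\Nc(f_j)|$ holds on the whole annulus and already gives $C|A_\delta^+|^{1/2}\|\Nc(f_j)\|_{\Ll^2(A_\delta^+)}$, which is uniformly small in $j$, so you can drop the $t$-variable analysis, the shrinking-area estimate via exponential integrability of $e^{-u_{f_j}}$, and the ``cleaner fallback'' (the last is not rigorous anyway, since the mean-value point $\vartheta$ need not be differentiable in $t$).
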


\begin{proof}
	Since the family $(\Gamma_j)_{j\in\N}$ is compact in the Weil--Petersson topology,
	\begin{equation*}
		\lim_{\delta \to 0^+} \sup_{j\in\N} \sup_{1-\delta<|z|<1} (1-|z|^2)\, |\Nc(f_j)(z)| =0,
	\end{equation*}
	and 
	\begin{equation*}
		\lim_{\delta \to 0^+} \sup_{j\in\N} \sup_{1<|w|<1+\delta} (|w|^2-1)\, |\Nc(g_j)(w)| =0.
	\end{equation*}
	Consequently, there exist $\delta_0>0$ and $C>0$ such that, by \eqref{eq:rho-g-bound} and \eqref{eq:rho-f-bound}, for all $j\in\N$ we have
	\begin{align}
		|v_{f_j}(z)| &\leq C(1-|z|^2)|\Nc(f_j)(z)|, &&\text{ if } 1-\delta_0<|z|<1, \label{eq:uniform-rho-f-bound} \\[2mm]
		|v_{g_j}(z)| &\leq C(|z|^2-1)|\Nc(g_j)(z)|, &&\text{ if }1<|z|<(1-\delta_0)^{-1}. \label{eq:uniform-rho-g-bound}
	\end{align}
	Let $0<\delta<\delta_0$. By Lemma \ref{lem:small-height-localization}, the exact heights $\xi_{f_j}$ and $\xi_{g_j}$ have a common positive lower bound away from the annuli
	\begin{equation*}
		A_\delta^+ \coloneqq \{1-\delta<|z|<1\}, \qquad A_\delta^- \coloneqq \{1<|z|<1+\delta\}.
	\end{equation*}
	On the same regions, $v_{f_j}$ and $v_{g_j}$ are uniformly bounded. Since $h_+ e^{u_{f_j}}=\xi_{f_j}\, e^{-v_{f_j}}$ and $h_-\, e^{u_{g_j}}=\xi_{g_j} e^{-v_{g_j}}$, there is a constant $c_\delta>0$ such that, for all $j\in\N$,
	\begin{equation*}
		\begin{cases} 
			\min\{\xi_{f_j},h_+ e^{u_{f_j}}\} \geq c_\delta &\text{on }\{|z|\leq1-\delta\},\\[2mm]
			\min\{\xi_{g_j},h_-\, e^{u_{g_j}}\} \geq c_\delta & \text{on }\{|z|\geq 1+\delta\}.
		\end{cases} 
	\end{equation*}
	Since $W(s)=\frac{1}{2}$ for $s\geq 2$ by \eqref{eq:large-J} and \eqref{eq:cutoff-profile}, if $2\eps<c_\delta$, then 
	\begin{equation*}
		\begin{cases} 
			W(\xi_{f_j}/\eps) = W(h_+\, e^{u_{f_j}}/\eps) & \text{on }\{|z|\leq1-\delta\}, \\[2mm]
			W(\xi_{g_j}/\eps) = W(h_-\, e^{u_{g_j}}/\eps) &\text{on }\{|z|\geq 1+\delta\}.
		\end{cases} 
	\end{equation*} 
	Thus, for $2\eps<c_{\delta}$, the differences in \eqref{eq:interior-height-replacement} and \eqref{eq:exterior-height-replacement} are supported in $A_\delta^+$ and $A_\delta^-$, respectively.
	
	Since $w(y)=W(e^y)$ and $w'\in \Ll^\infty(\R)$ by Lemma \ref{lem:cutoff-function-properties}, we obtain from the mean-value theorem and \eqref{eq:height-factorization} that 
	\begin{equation}\label{eq:interior-cutoff-height-error}
		\left| W(\xi_{f_j}/\eps) -W(h_+ e^{u_{f_j}}/\eps) \right|  \leq \|w'\|_{\Ll^{\infty}(\R)}\, \left| \log\left(\frac{\xi_{f_j}}{\eps}\right) - \log\left(\frac{h_+}{\eps}\right) - u_{f_j} \right|\leq \|w'\|_{L^\infty}|v_{f_j}|.
	\end{equation}
	Similarly, for $g_j$, we obtain from \eqref{eq:exterior-height-factorization} that
	\begin{equation}\label{eq:exterior-cutoff-height-error}
		\left| W(\xi_{g_j}/\eps) -W(h_-\, e^{u_{g_j}}/\eps) \right| \leq \|w'\|_{\Ll^\infty(\R)}\, |v_{g_j}|. 
	\end{equation}
	We first focus on \eqref{eq:interior-height-replacement}. We deduce from \eqref{eq:sigma-id-interior} and \eqref{eq:uniform-rho-f-bound} that, for all $j\in\N$,
	\begin{equation*}
		|v_{f_j}|\,|\sigma_{\Id}^+| \leq C\, \frac{|\Nc(f_j)|}{1+|z|^2} \leq C\, |\Nc(f_j)|.
	\end{equation*}
	Similarly, using Lemma \ref{lm:decompo-sigmaf}, we have, for all $j\in\N$,
	\begin{equation*}
		|v_{f_j}|\,|L^+_{f_j}| \leq C|\Nc(f_j)|^2.
	\end{equation*}
	It follows from \eqref{eq:interior-cutoff-height-error} that
	\begin{equation}\label{eq:interior-height-error-bound}
		\begin{aligned} 
			&\left| \int_{\D} \left[ W(\xi_{f_j}/\eps) -W(h_+ e^{u_{f_j}}/\eps) \right] (\sigma_{\Id}^+ +L^+_{f_j})\, \dd^2 z \right| \\[2mm]
		&\leq C\int_{A_\delta^+} \left(|\Nc(f_j)|+|\Nc(f_j)|^2\right)\, \dd^2 z \\[2mm]
		&\leq C|A^+_\delta|^{\frac{1}{2}} \left( \int_{A_\delta}|\Nc(f_j)|^2\, \dd^2 z \right)^{1/2} + C\int_{A_\delta^+}|\Nc(f_j)|^2\, \dd^2 z.
		\end{aligned} 
	\end{equation}
	Since $|A_\delta^+|\leq C\delta$, the right-hand side tends to zero as $\delta\to 0$ uniformly in $j$ by \eqref{eq:WP-convergence-data}. This proves \eqref{eq:interior-height-replacement}.
	
	The proof of \eqref{eq:exterior-height-replacement} is similar, using instead \eqref{eq:uniform-rho-g-bound} and Lemma \ref{prop:exterior-epstein-flux}.
\end{proof}

We define
\begin{align*}
 B_{\eps,+}(f) &\coloneqq \int_\D \left[ W(\xi_f/\eps)(\sigma_{\Id}^++L^+_f) -W(h_+/\eps)\sigma_{\Id}^+ \right]\, \dd x\, \dd y,\\[2mm]
 B_{\eps,-}(g) &\coloneqq \int_{\D^*} \left[ W(\xi_g/\eps)(\sigma_{\Id}^-+L^-_g) -W(h_-/\eps)\sigma_{\Id}^- \right]\, \dd x\, \dd y, \\[2mm]
 B_\eps(f,g) & \coloneqq B_{\eps,+}(f)+B_{\eps,-}(g).
\end{align*}
By \eqref{eq:round-integral-cancellation}, we have 
\begin{equation}\label{eq:Beps}
	B_{\eps}(f,g) = \int_\D  W(\xi_f/\eps)(\sigma_{\Id}^++L^+_f)\, \dd x\, \dd y + \int_{\D^*} W(\xi_g/\eps)(\sigma_{\Id}^-+L^-_g)\, \dd x\, \dd y.
\end{equation}

\begin{lemma}\label{thm:uniform-cutoff-limit}
	Let $(\Gamma_j)_{j\in\N}$ be a sequence whose range is contained in a compact subset of the Weil--Petersson space.
		Let $u_{j,+}$ and $u_{j,-}$ be the boundary traces of $\log|f_j'|$ and $\log|g_j'|$. Then
	\begin{equation}\label{eq:uniform-cutoff-limit}
		\lim_{\eps\to 0^+}\sup_{j\in\N} \left| B_\eps(f_j,g_j) -\frac{1}{2}\int_{\s^1}(u_{j,+}-u_{j,-})\,\dd\theta -\frac{1}{2}\left( \Er(u_{j,+},u_{j,+}) +\Er(u_{j,-},u_{j,-}) \right) \right| =0.
	\end{equation}
\end{lemma}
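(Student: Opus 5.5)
First, by Lemma \ref{lem:uniform-height-correction}, I replace $W(\xi_{f_j}/\eps)$ by $W(h_+e^{u_{f_j}}/\eps)$ in the first integral of \eqref{eq:Beps}, and $W(\xi_{g_j}/\eps)$ by $W(h_-e^{u_{g_j}}/\eps)$ in the second. The error is uniformly small in $j$ as $\eps\to0$. After this replacement I subtract the round terms again, as in the definition of $B_{\eps,\pm}$, so the interior contribution becomes
\begin{equation*}
\int_{\D}\bigl[W(h_+e^{u_{f_j}}/\eps)-W(h_+/\eps)\bigr]\sigma_{\Id}^+\,\dd^2z+\int_{\D}W(h_+e^{u_{f_j}}/\eps)\,L^+_{f_j}\,\dd^2z ,
\end{equation*}
and similarly for the exterior.

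I then pass to the variables $t=-\log h_+(r)$ and $\theta$, and set $\eps=e^{-T}$ and $t=T+x$. Since $\dd t=\sigma_{\Id}^+r\,\dd r$ by \eqref{eq:dt-sigma}, and $\log(h_+e^{u}/\eps)=u-x$, the first integral becomes exactly
\begin{equation*}
\int_{-T}^\infty\int_{\s^1}\Phi_x(\Pcal_{r_{T,x}}u_{j,+})\,\dd\theta\,\dd x .
\end{equation*}
Here I use $u_{f_j}(re^{\ii\theta})=\Pcal_r u_{j,+}(\theta)$, and $t\geq0$ corresponds to $x\geq -T$. By \eqref{eq:uniform-Poisson-zero-order}, with $K$ the set of traces $\{u_{j,+}\}$, this tends to $\frac12\int u_{j,+}\,\dd\theta$ uniformly in $j$. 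The set $K$ is precompact in $\W^{\frac12,2}$: the $\dot{\W}^{\frac12,2}$ part follows from \eqref{eq:trace-energies} and the compactness hypothesis in the WP topology, and the means $\int u_{j,+}$ are bounded by the normalisation \eqref{eq:normalisation}. I will take this as an input: the WP topology controls the mean of the trace as well as its seminorm.

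For the $L^+$ term, I use \eqref{eq:linear-flux-DtN}, which gives
\begin{equation*}
\int_{-T}^\infty\int_{\s^1}W\bigl(e^{\Pcal_r u-x}\bigr)\,\tfrac{2r}{1+r^2}\,\dr_r\Pcal_r u\,\dd\theta\,\dd x ,\qquad r=r_{T,x}.
\end{equation*}
The key point is that $\dr_r\Pcal_r u=r^{-1}\Lambda(\Pcal_r u)$, which can be read off from the Fourier series. Moreover $\int_{\s^1}\Lambda v\,\dd\theta=0$, so I may subtract the constant $W(e^{-x})$ and replace $W(e^{\Pcal_r u-x})$ by $\Phi_x(\Pcal_r u)$. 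The $\theta$-integral is then $\frac{1}{2\pi}$ times $\Er(\Pcal_ru,\Phi_x(\Pcal_ru))$, up to the normalisation convention of $\Er$ versus $\int\dd\theta$, which must be tracked carefully. The weight is $s_{T,x}=\frac{2}{1+r_{T,x}^2}$, suitably normalised, which is bounded, nonnegative and tends to $1$. Proposition \ref{prop:uniform-Poisson-regularization}, via \eqref{eq:uniform-Poisson-energy}, then yields $\frac12\Er(u_{j,+},u_{j,+})$ uniformly in $j$.

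The exterior part is handled symmetrically. I use $\tau=\log h_-$, which is negative and increases to $0$ as $s\to\infty$, so I set $-\tau=T+x$. I use \eqref{eq:dt-sigma-}, \eqref{eq:linear-flux-DtN-g} and the Kelvin transform $s\mapsto1/s$, under which $u_{g_j}(se^{\ii\theta})=\Pcal_{1/s}u_{j,-}(\theta)$ and $s\,\dr_s=-r\,\dr_r$. The signs coming from $\sigma_{\Id}^-<0$, from $\dd\tau$, and from the orientation of the radial derivative combine to give the $-\frac12\int u_{j,-}$ term and the $+\frac12\Er(u_{j,-},u_{j,-})$ term.

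I expect the main obstacle to be this bookkeeping: the exterior signs, the exact constants relating the $\dd\theta$-integral to $\Er$, and checking that the Jacobian factors $\frac{2r}{1+r^2}$ and the $r^{-1}$ in $\dr_r$ combine into an admissible $s_{T,x}$ with limit exactly $1$. I also need to justify that the compactness hypothesis gives precompactness of the traces $u_{j,\pm}$ in the full $\W^{\frac12,2}$ norm, including their means.
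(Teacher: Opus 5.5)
Your proposal is correct and follows the same route as the paper. You replace the heights via Lemma \ref{lem:uniform-height-correction} and pass to the variable $x=t-T$, so that the round part becomes $\int_{-T}^\infty\int_{\s^1}\Phi_x(\Pcal_{r_{T,x}}u_{j,\pm})$ and the $L^\pm$ parts become weighted energies via $\dr_r\Pcal_r u=r^{-1}\Lambda\Pcal_r u$ and $\int_{\s^1}\Lambda v\,\dd\theta=0$; you then apply Proposition \ref{prop:uniform-Poisson-regularization}, taking precompactness of the traces as input exactly as the paper implicitly does. One bookkeeping point: there is no factor $\frac{1}{2\pi}$, because with the paper's conventions $\int_0^{2\pi}F\,\Lambda v\,\dd\theta=2\pi\sum_n|n|v_n\overline{F_n}=\Er(v,F)$ for real $F$, so the weights are exactly $s_{T,x}=\frac{2}{1+r_{T,x}^2}$ in the interior and $s_{T,x}=\frac{2r_{T,x}^2}{1+r_{T,x}^2}$ in the exterior (via $r=1/s$), both bounded and tending to $1$, with no renormalisation needed.
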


\begin{proof}
	We denote
	\begin{align*}
		B_{\eps,+}^0(f_j) &\coloneqq \int_{\D} \left[ W(h_+ e^{u_{f_j}}/\eps)(\sigma_{\Id}^++L^+_{f_j}) -W(h_+/\eps)\sigma_{\Id}^+ \right]\, \dd x\, \dd y,\\[2mm]
		B_{\eps,-}^0(g_j) &\coloneqq \int_{\D^*} \left[ W(h_- e^{u_{g_j}}/\eps)(\sigma_{\Id}^- + L^-_{g_j}) -W(h_-/\eps)\sigma_{\Id}^- \right]\, \dd x\, \dd y.
	\end{align*}
		We rewrite these two terms using the Poisson semigroup.
	We first focus on $B_{\eps,+}^0$. Given $r\in(0,1)$, we denote
	\begin{equation*}
		T=\log\frac{1}{\eps}, \qquad t=-\log h_+(r), \qquad a= t-T = \log\left(\frac{\eps}{h_+(r)}\right).
	\end{equation*}
		We then have $\frac{h_+(r)}{\eps}=e^{-a}$ and $r=r_{T,a}$ in the notation of \eqref{eq:Poisson-radius}.
	Consequently, we deduce from the definition of $\Phi$ in \eqref{eq:Phi-definition} that 
	\begin{equation}\label{eq:W-Phi}
		W(h_+ e^{u_{f_j}}/\eps)-W(h_+/\eps) = W(e^{u_{f_j}-a}) - W(e^{-a}) = \Phi_a(u_{f_j}) = \Phi_a(\Pcal_{r_{T,a}} u_{j,+}).
	\end{equation}
	Using \eqref{eq:dt-sigma} and \eqref{eq:linear-flux-DtN}, we obtain 
	\begin{equation}\label{eq:sigma-term-B0+}
		\begin{aligned} 
			\int_{\D} \left[W(h_+ e^{u_{f_j}}/\eps)-W(h_+/\eps) \right]\, \sigma_{\Id}^+\, \dd x\, \dd y & = \int_0^1 \int_{\s^1} \left[W(h_+ e^{u_{f_j}}/\eps)-W(h_+/\eps) \right]\, \sigma_{\Id}^+\, r\, \dd\theta\, \dd r\\[2mm]
			 & = \int_{\s^1}  \int_0^{\infty}\Phi_a(\Pcal_{r_{T,a}} u_{j,+})\, \dd t\, \dd\theta \\[2mm]
			 & = \int_{\s^1} \int_{-T}^{\infty} \Phi_a(\Pcal_{r_{T,a}} u_{j,+})\, \dd a\, \dd\theta.
		\end{aligned} 
	\end{equation}
		For the remaining part, we consider the identity $\dr_r \Pcal_r u = r^{-1}\, \Lambda \Pcal_r u$, which follows from the Fourier-series expansion \eqref{eq:Poisson-Fourier-definition}. Hence, we have 
	\begin{equation*}
		\frac{2r}{1+r^2}\, \dr_r\Pcal_r u = \frac{2}{1+r^2}\, \Lambda \Pcal_r u.
	\end{equation*}
		By Lemma \ref{lm:decompo-Lf}, we have 
	\begin{equation*}
		\begin{aligned}
			\int_{\D} W(h_+ e^{u_{f_j}}/\eps)\, L^+_{f_j} & = \int_{\s^1} \int_0^1 W(h_+ e^{u_{f_j}}/\eps)\, L^+_{f_j}\, r\, \dd r\, \dd \theta \\[2mm]
			 & = \int_{\s^1} \int_0^{\infty} W(h_+ e^{u_{f_j}}/\eps)\, \frac{2r}{1+r^2}\, \dr_r \Pcal_r u_{j,+}\,\dd t\, \dd \theta \\[2mm]
			 & = \int_{\s^1} \int_{-T}^{\infty} W(h_+ e^{u_{f_j}}/\eps)\, \frac{2}{1+r_{T,a}^2}\, \Lambda \Pcal_r u_{j,+}\,\dd a\, \dd \theta.
		\end{aligned}
	\end{equation*}
	Since $\int_{\s^1}\Lambda \Pcal_r u_{j,+}=0$, we have 
	\begin{equation*}
		\begin{aligned}
			\int_{\D} W(h_+ e^{u_{f_j}}/\eps)\, L^+_{f_j}\, \dd x\, \dd y & = \int_{\s^1} \int_{-T}^{\infty} \left[ W(h_+ e^{u_{f_j}}/\eps) - W(h_+/\eps) \right] \, \frac{2}{1+r_{T,a}^2}\, \Lambda \Pcal_r u_{j,+}\,\dd a\, \dd \theta.
		\end{aligned}
	\end{equation*}
		Using \eqref{eq:W-Phi} in the preceding relation together with \eqref{eq:sigma-term-B0+}, we obtain the following expression for $B_{\eps,+}^0$:
	\begin{equation}\label{eq:interior-round-linear-Poisson}
		B_{\eps,+}^0(f_j) = \int_{-T}^{\infty}\int_{\s^1} \Phi_a(\Pcal_{r_{T,a}}u_{j,+})\,\dd\theta\, \dd a + \int_{-T}^{\infty} \frac{2}{1+r_{T,a}^2} \Er\left( \Pcal_{r_{T,a}}u_{j,+}, \Phi_a(\Pcal_{r_{T,a}}u_{j,+}) \right)\dd a.
	\end{equation}
		We proceed similarly for $B_{\eps,-}$. Using \eqref{eq:dt-sigma-}, we first obtain 
	\begin{equation*}
		\int_{\D^*} \left[W(h_- e^{u_{g_j}}/\eps)-W(h_-/\eps) \right]\, \sigma_{\Id}^-\, \dd x\, \dd y  = - \int_{\s^1} \int_{-T}^{\infty} \Phi_a(\Pcal_{r_{T,a}} u_{j,-})\, \dd a\, \dd\theta.
	\end{equation*}
		Then, using Lemma \ref{lm:decompo-Lg} and $u_j(s e^{\ii\theta}) = \Pcal_{\frac{1}{s}} u_{j,-}(\theta)$, we obtain
	\begin{equation}\label{eq:exterior-round-linear-Poisson}
		B_{\eps,-}^0(g_j) = -\int_{-T}^{\infty}\int_{\s^1} \Phi_a(\Pcal_{r_{T,a}}u_{j,-})\,\dd a\, \dd\theta + \int_{-T}^{\infty} \frac{2r_{T,a}^2}{1+r_{T,a}^2}\, \Er\left( \Pcal_{r_{T,a}}u_{j,-}, \Phi_a(\Pcal_{r_{T,a}}u_{j,-}) \right)\dd a.
	\end{equation}
		Proposition \ref{prop:uniform-Poisson-regularization}, together with Lemma \ref{lem:uniform-height-correction}, implies
	\eqref{eq:uniform-cutoff-limit}.
\end{proof}

We set
\begin{equation}\label{eq:B-limit}
 B(f,g)= \frac{1}{2}\int_{\s^1}(u_+-u_-)\,\dd\theta +\frac{1}{2}\left( \Er(u_+,u_+) +\Er(u_-,u_-)\right).
\end{equation}

\begin{corollary}
For every analytic Jordan curve $\Gamma$, we have
\begin{equation}\label{eq:analytic-finite-part-volume}
 	V(\Gamma) = B(f,g) +\frac{1}{2}\int_{\D} R^+_f\,\dd x\, \dd y +\frac{1}{2}\int_{\D^*}R_g^-\,\dd x\, \dd y.
\end{equation}
\end{corollary}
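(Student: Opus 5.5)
The plan is to start from the exact cutoff formula of Lemma \ref{prop:exact-weighted-volume}, split the integrands using Lemmas \ref{lm:decompo-sigmaf} and \ref{prop:exterior-epstein-flux}, and let $\eps\to0^+$ in each piece. Here $V(\Gamma)$ is understood as $\lim_{\eps\to 0^+}V_\eps(\Gamma)$, the limit exhibited below.

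\textbf{Step 1: splitting $V_\eps(\Gamma)$.} Substituting $\sigma_f=\sigma_{\Id}^++L_f^++R_f^+$ and $\sigma_g^-=\sigma_{\Id}^-+L_g^-+R_g^-$ into \eqref{eq:exact-weighted-volume-flux} gives
\begin{equation*}
	V_\eps(\Gamma)=\int_{\D}W(\xi_f/\eps)(\sigma_{\Id}^++L_f^+)\,\dd x\,\dd y+\int_{\D^*}W(\xi_g/\eps)(\sigma_{\Id}^-+L_g^-)\,\dd x\,\dd y+\int_{\D}W(\xi_f/\eps)R_f^+\,\dd x\,\dd y+\int_{\D^*}W(\xi_g/\eps)R_g^-\,\dd x\,\dd y .
\end{equation*}
By \eqref{eq:Beps}, the sum of the first two integrals is exactly $B_\eps(f,g)$.

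Splitting the integral is legitimate, and not merely a formal rearrangement, for two reasons:
\begin{enumerate}
	\item $R_f^+$ and $R_g^-$ are integrable by \eqref{eq:global-remainders-L1}, and $0\leq W\leq\frac12$.
	\item For an analytic curve, $\xi_f\sim c\,\rho_+$ near $\s^1$ with $c>0$. Since $W(x)=j_\infty x^2$ for $x\leq1$, we get $W(\xi_f/\eps)=O(\rho_+^2/\eps^2)$ near the boundary. This compensates the $\rho_+^{-1}$ singularities of $\sigma_{\Id}^+$ and $L_f^+$ (recall that $\Nc(f)$ is bounded for analytic $\Gamma$). The same argument applies near $\s^1$ from outside for $g$.
\end{enumerate}
Near $\infty$, the truncation procedure of Lemma \ref{prop:exact-weighted-volume} applies unchanged.

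\textbf{Step 2: the remainder terms.} As $\eps\to0^+$, we have $\xi_f/\eps\to+\infty$ pointwise on $\D$, because $\xi_f>0$ there. Hence $W(\xi_f/\eps)\to\frac12$ pointwise, by \eqref{eq:large-J} and \eqref{eq:cutoff-profile}. Since $|W(\xi_f/\eps)R_f^+|\leq\frac12|R_f^+|\in\Ll^1(\D)$, dominated convergence gives
\begin{equation*}
	\int_{\D}W(\xi_f/\eps)R_f^+\,\dd x\,\dd y\to\frac12\int_{\D}R_f^+\,\dd x\,\dd y ,
\end{equation*}
and the same argument handles the term in $R_g^-$ on $\D^*$.

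\textbf{Step 3: the principal term.} Apply Lemma \ref{thm:uniform-cutoff-limit} to the constant sequence $\Gamma_j=\Gamma$, whose range is trivially compact. This yields $B_\eps(f,g)\to B(f,g)$ with $B$ as in \eqref{eq:B-limit}. Combining Steps 1--3 shows that $V_\eps(\Gamma)$ converges, and that its limit is the right-hand side of \eqref{eq:analytic-finite-part-volume}.

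\textbf{Main obstacle.} The only delicate point is the justification in Step 1. Lemma \ref{prop:exact-weighted-volume} was proved through truncations $|z|<r$ and $1/r<|w|<R$, so one must check that the four pieces are separately (absolutely) integrable before splitting. I would handle this with the local analytic bounds near $\s^1$ described in Step 1, which make $W(\xi/\eps)(\sigma_{\Id}+L)$ bounded near the circle. At $\infty$ I would use the expansions \eqref{eq:exterior-horizontal-infinity}, where $W=\frac12$ and $\sigma_{\Id}^-+L_g^-=O(|z|^{-4})$.
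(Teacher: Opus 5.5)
Your proposal is correct and follows the paper's proof. The paper likewise applies Lemma~\ref{thm:uniform-cutoff-limit} to the constant sequence $\Gamma_j=\Gamma$ to get $B_\eps(f,g)\to B(f,g)$, and it passes to the limit in the remainder terms $\int W(\xi/\eps)R^\pm$ by dominated convergence, using $0\leq W\leq\frac12$ and \eqref{eq:global-remainders-L1}. Your extra check that each piece of the split is separately absolutely integrable for fixed $\eps$ (through $\xi_f\asymp\rho_+$ near $\s^1$ and the decay at $\infty$) is a detail the paper leaves implicit, and your version of it is sound.
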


\begin{proof}
Since $\Gamma$ is analytic, we do not need an approximation and may work directly with the constant sequence $\Gamma_j=\Gamma$.
Then, \eqref{eq:uniform-cutoff-limit} implies that $B_\eps(f,g)\to B(f,g)$ as $\eps\to 0$. By Lemma \ref{lm:decompo-sigmaf} and Lemma \ref{prop:exterior-epstein-flux}, the remaining terms in the expression for $V(\Gamma)$ are
\begin{equation*}
 	\int_\D W(\xi_f/\eps)R_f^+\,\dd x\,\dd y +\int_{\D^*}W(\xi_g/\eps)R_g^-\,\dd x\, \dd y.
\end{equation*}
We have $W(\xi/\eps)\to\frac{1}{2}$ a.e.\ and $0\leq W\leq \frac{1}{2}$. The integrability of $R_f^+$ and $R_g^-$ in \eqref{eq:global-remainders-L1} provides an $\Ll^1$ bound. We obtain \eqref{eq:analytic-finite-part-volume} by dominated convergence.
\end{proof}

\section{Continuity of the renormalised volume}\label{sec:continuity}

\begin{proof}[Proof of \Cref{th:continuity}]
The second term in the definition of $\VR$ in \eqref{eq:intro-renormalized-volume} is continuous by \cite[Theorem 1.4]{BBVPW2025}. Therefore, we focus on the quantity $V(\Gamma)\coloneqq \lim_{\eps\to 0^+} V_{\eps}(\Gamma)$ defined in \eqref{eq:volume-boundary-primitive}.

Let $(\Gamma_j)_{j\in\N}$ be as in \eqref{eq:approximation}.  
By \eqref{eq:normalisation}, \eqref{eq:WP-convergence-data} and \eqref{eq:trace-energies}, we have strong convergence $u_{j,+}\to u_+$ and $u_{j,-}\to u_-$ in $\W^{\frac{1}{2},2}(\s^1)$.

We apply \eqref{eq:exact-weighted-volume-flux} to every analytic $\Gamma_j$, using the decomposition of $\sigma_f$ in Lemma \ref{lm:decompo-sigmaf}, that of $\sigma_g$ in Lemma \ref{prop:exterior-epstein-flux}, the cancellation \eqref{eq:round-integral-cancellation}, and the notation \eqref{eq:Beps}:
\begin{equation*}
 V_\eps(\Gamma_j) = B_{\eps}(f_j,g_j) +\int_\D W(\xi_{f_j}/\eps)\, R_{f_j}^+\,\dd^2 z  + \int_{\D^*} W(\xi_{g_j}/\eps)\, R_{g_j}^-\,\dd^2 z.
\end{equation*}
By \eqref{eq:analytic-finite-part-volume}, we have 
\begin{align*}
 V(\Gamma_j) = B(f_j,g_j) +\frac12\int_\D R_{f_j}^+\,\dd^2 z +\frac12\int_{\D^*}R_{g_j}^-\,\dd^2 z.
\end{align*}
Hence,
\begin{equation}\label{eq:exact-cutoff-difference}
 V(\Gamma_j)-V_{\eps}(\Gamma_j) = B(f_j,g_j)-B_{\eps}(f_j,g_j) +\int_\D \left(\frac{1}{2}- W (\xi_{f_j}/\eps) \right)\, R_{f_j}^+\,\dd^2 z +\int_{\D^*} \left(\frac{1}{2}-W(\xi_{g_j}/\eps) \right)\, R_{g_j}^-\,\dd^2 z.
\end{equation}
By Lemma \ref{thm:uniform-cutoff-limit}, we have
\begin{equation*}
	\lim_{\eps\to 0} \sup_{j\in\N} |B(f_j,g_j)-B_{\eps}(f_j,g_j)| =0.
\end{equation*}
From \eqref{eq:large-J} and \eqref{eq:cutoff-profile}, we have $W(\xi/\eps)=\frac{1}{2}$ for $\xi\geq 2\eps$. 
Let $\delta\in(0,1)$. By Lemma \ref{lem:small-height-localization}, for $\eps>0$ small enough, we have 
\begin{equation*}
	\begin{aligned} 
	& \int_\D \left(\frac{1}{2}- W (\xi_{f_j}/\eps) \right)\, R_{f_j}^+\,\dd^2 z +\int_{\D^*} \left(\frac{1}{2}-W(\xi_{g_j}/\eps) \right)\, R_{g_j}^-\,\dd^2 z \\[2mm]
	& = \int_{\D\setminus \D_{1-\delta}} \left(\frac{1}{2}- W (\xi_{f_j}/\eps) \right)\, R_{f_j}^+\,\dd^2 z +\int_{\D_{1+\delta}\setminus \D} \left(\frac{1}{2}-W(\xi_{g_j}/\eps) \right)\, R_{g_j}^-\,\dd^2 z.
	\end{aligned} 
\end{equation*}
The pointwise estimates of $R_f^+$ in Lemma \ref{lm:decompo-sigmaf} and $R_g^-$ in Lemma \ref{prop:exterior-epstein-flux} imply that 
\begin{equation*}
	\begin{aligned} 
		& \left| \int_\D \left(\frac{1}{2}- W (\xi_{f_j}/\eps) \right)\, R_{f_j}^+\,\dd^2 z +\int_{\D^*} \left(\frac{1}{2}-W(\xi_{g_j}/\eps) \right)\, R_{g_j}^-\,\dd^2 z \right| \\[2mm]
		& \leq  \int_{\D\setminus \D_{1-\delta}} \left(|\Nc(f_j)(z)|^2+ (1-|z|^2)^2|\Sc(f_j)(z)|^2 \right)\,\dd^2 z \\[2mm]
		&\qquad  +\int_{\D_{1+\delta}\setminus \D} \left(|\Nc(g_j)(z)|^2 + (1-|z|^2)^2|\Sc(g_j)(z)|^2 \right)\,\dd^2 z .
	\end{aligned} 
\end{equation*}
By \eqref{eq:WP-convergence-data} and \eqref{eq:WP-convergence-data2}, the integrands on the right-hand side are uniformly integrable in $j$. Therefore, we obtain
\begin{equation*}
	 \limsup_{\eps\to 0^+}\ \sup_{j} \left| \int_\D \left(\frac{1}{2}- W (\xi_{f_j}/\eps) \right)\, R_{f_j}^+\,\dd^2 z +\int_{\D^*} \left(\frac{1}{2}-W(\xi_{g_j}/\eps) \right)\, R_{g_j}^-\,\dd^2 z \right| =0.
\end{equation*}
We conclude that 
\begin{equation*}
	\limsup_{\eps\to 0^+} \sup_j |V(\Gamma_j) - V_{\eps}(\Gamma_j)|=0.
\end{equation*}
Therefore,
\begin{equation*}
	\begin{aligned}
		\limsup_{j\to\infty} |V(\Gamma_j)-V(\Gamma)| & \leq \limsup_{\eps\to 0^+}\limsup_{j\to\infty} \Big[ \left| V(\Gamma_j)-V_{\eps}(\Gamma_j) \right|+\left| V_{\eps}(\Gamma_j) - V_{\eps}(\Gamma) \right|+ \left|V_{\eps}(\Gamma)-V(\Gamma) \right| \Big]  \\[2mm]
		& \leq \limsup_{\eps\to 0^+}\limsup_{j\to\infty} \Big[ \left|  V_{\eps}(\Gamma_j) - V_{\eps}(\Gamma) \right|+ \left| V_{\eps}(\Gamma)-V(\Gamma) \right| \Big] .
	\end{aligned}
\end{equation*}
For every $\eps>0$, \cite[Lemma 5.17]{BBVPW2025} also gives $V_{\eps}(\Gamma_j) \to V_{\eps}(\Gamma)$ as $j\to\infty$. Hence, we obtain
\begin{equation*}
	\limsup_{j\to\infty} |V(\Gamma_j)-V(\Gamma)|\leq \limsup_{\eps\to 0}\left|   V_{\eps}(\Gamma)-V(\Gamma) \right|.
\end{equation*}
By definition, $V(\Gamma)=\lim_{\eps\to 0}V_{\eps}(\Gamma)$. Hence, $V(\Gamma_j)\to V(\Gamma)$ as $j\to\infty$.

We now let $(\Gamma_k)_{k\in\N}$ be an arbitrary sequence of WP curves converging to $\Gamma$ in the WP topology. For each $k\in\N$, we consider the approximation sequence $(\Gamma_{k,j})_{j\in\N}$ of \eqref{eq:approximation} for $\Gamma_k$. Then the family $(\Gamma_{k,j})_{k,j\in\N}$ defines a compact family in the WP topology. Hence, we can apply all the preceding arguments uniformly and conclude the proof of Theorem \ref{th:continuity}.
\end{proof}

\end{document}